\newtheorem{thm}{Theorem}[section]
\newtheorem{prop}[thm]{Proposition}
\newtheorem{cor}[thm]{Corollary}
\newtheorem{lem}[thm]{Lemma}
\theoremstyle{definition}
\theoremstyle{remark}
\newtheorem{Rmk}[thm]{Remark}
\newenvironment{red}
{\relax\color{red}}
{\hspace*{.5ex}\relax}
\newcommand{\ber}{\begin{red}}
\newcommand{\er}{\end{red}}
\newenvironment{verd}
{\relax\color{magenta}}
{\hspace*{.5ex}\relax}
\newcommand{\bg}{\begin{verd}}
\newcommand{\eg}{\end{verd}}
\numberwithin{equation}{subsection}
\newcommand{\Z}{\mathbb{Z}}
\newcommand{\Q}{\mathbb{Q}}
\newcommand{\g}{\mathfrak{g}}
\newcommand{\Hom}{\mathrm{Hom}}
\newcommand{\residue}{{\rm res}}
\newcommand{\Ind}{{\rm Ind}}
\newcommand{\Res}{{\rm Res}}
\newcommand{\Top}{{\rm Top}}
\newcommand{\Rad}{{\rm Rad}}
\newcommand{\rlQ}{\mathsf{Q}}   
\newcommand{\wlP}{\mathsf{P}}   
\newcommand{\weyl}{\mathsf{W}}  
\newcommand{\cmA}{\mathsf{A}}  
\newcommand{\ST}{\mathsf{ST}}   
\newcommand{\bR}{\mathbf{k}}   
\newcommand{\fqH}{R^{\Lambda}}   
\newcommand{\fqHH}{R^{2\Lambda_0}}
\newcommand{\sBox}[1]
{
\xy
(-2,-2)*{};(2,-2)*{} **\dir{-};
(-2,2)*{};(2,2)*{} **\dir{-};
(-2,-2)*{};(-2,2)*{} **\dir{-};
(2,-2)*{};(2,2)*{} **\dir{-};
(0,0)*{#1};
\endxy
}
\begin{document}

\title[Representation type for blocks of Hecke algebras of classical type]
{Representation type for block algebras of Hecke algebras of classical type}

\author[Susumu Ariki]{Susumu Ariki}
\thanks{S.A. is supported in part by JSPS, Grant-in-Aid for Scientific Research (C) 15K04782.}
\address{Department of Pure and Applied Mathematics, Graduate School of Information
Science and Technology, Osaka University, 1-5 Yamadaoka, Suita, Osaka 565-0871, Japan}
\email{ariki@ist.osaka-u.ac.jp}


\begin{abstract}
We find representation type of the cyclotomic quiver Hecke algebras $\fqH(\beta)$ of level two in affine type $A$. 
In particular, we have determined representation type for all the block algebras of Hecke algebras of classical type 
(except for characteristic two in type $D$), which has not been known for a long time. 
As an application of this result, we prove that block algebras of finite representation type 
are Brauer tree algebras whose Brauer trees are straight lines without exceptional vertex if the Hecke algebras 
are of classical type. We conjecture that this statement should hold for Hecke algebras of exceptional type.
\end{abstract}

\maketitle


\vskip 2em

\section*{Introduction}

Hecke algebras of classical type are important in modular representation theory of finite groups of Lie type in non-defining characteristic. For example, 
those of type $A$ and $B$ appear as endomorphism algebras in Harish-Chandra series. Because of the importance, they 
have received detailed studies since 1980's when Dipper and James initiated the field.

A long standing open problem in modular representation theory of Hecke algebras was to determine representation type of Hecke algebras of type $B$ with equal or 
unequal parameters blockwise. For Hecke algebras of type $A$, representation type of their block algebras was determined by 
Erdmann and Nakano \cite{EN02}.\footnote{The proof in \cite{EN02} uses a result which judged representation type by complexity, 
but this result is known to be false. However, the authors had a different proof and they were able to avoid its use in the second proof. 
Thus, their final result in \cite{EN02} holds without any change. See explanation in the corrigendum to \cite{Ar05}.} 
Another proof is given by the author, Iijima and Park \cite{AIP13}. In that paper, we have determined representation type for a one parameter family of 
finite quiver Hecke algebras in affine type $A$, and the family includes block algebras of Hecke algebras in type $A$ 
at a special parameter value.\footnote{Warning for those who are not familiar with Hecke algebras. The parameter $q$ which appears in the 
definition of the Hecke algebra of type $A$ is fixed and it determines the affine Lie type $A^{(1)}_\ell$. The one parameter we mention 
here is another parameter which comes from polynomials $Q_{ij}(u,v)$ in the definition of the finite quiver Hecke algebra in affine type $A$.}
On the other hand, we may embed Hecke algebras of type $D$ to Hecke algebras of type $B$. Assuming that the base field has an odd characteristic, 
we may apply Clifford theory to determine representation type of block algebras of Hecke algebras in type $D$. See Appendix 2.
Therefore, the problem to solve is to determine representation type of block algebras of Hecke algebras in type $B$. 

Let $\bR$ be an algebraically closed field, $q\ne1, Q\in \bR^\times$. The Hecke algebra of type $B$ is the $\bR$-algebra $\mathcal{H}_n(q,Q)$ defined by generators 
$T_0,T_1,\dots,T_{n-1}$ and relations
\begin{gather*}(T_0-Q)(T_0+1)=0,\quad (T_i-q)(T_i+1)=0\;\;(1\le i<n)\\
T_0T_1T_0T_1=T_1T_0T_1T_0, \quad T_iT_{i+1}T_i=T_{i+1}T_iT_{i+1}\;\;(1\le i<n-1)\\
T_iT_j=T_jT_i\;\;(j\ne i\pm1).
\end{gather*}
We have two cases to consider. When $-Q\not\in q^{\Z}$, then Morita equivalence theorem by Dipper and James \cite{DJ92} implies that we determine representation type of an (outer)  
tensor product of two block algebras of Hecke algebras of type $A$. Then, it suffices to consider tensor product of block algebras of finite or tame representation type, as 
representation types for other tensor products are clear. Block algebras of finite representation type are Morita equivalent to the principal block of rank $\ell+1$ \cite{U92}. 
Further, block algebras of tame representation type appear only when $\ell=1$ \cite{EN02}, and they are Morita equivalent to either 
the principal block of rank $4$ or $5$ by Scopes' Morita equivalence. Therefore, we can determine the representation type blockwise when $-Q\not\in q^{\Z}$ by studying 
those rather simple tensor product algebras. See Appendix 1. 
Hence we focus on the case $-Q=q^s$, for some $s\in\Z$, and the Hecke algebra of type $B$ in this case is isomorphic to the cyclotomic Hecke algebra associated with level two dominant weight $\Lambda=\Lambda_0+\Lambda_s$. 

In my work in 1990's, I developed Fock space theory for cyclotomic Hecke algebras, and sophisticated machinery was adopted to calculate decomposition numbers 
in characteristic zero \cite{Ar01}. 
With some trick, it also allowed us to compute some crucial decomposition numbers in positive characteristic for certain block algebras \cite{AM04}. Using this computation, 
I generalized and settled Uno's conjecture which predicted the representation type of the whole algebra in terms of the Poincar\'e polynomial of the Weyl group \cite{Ar05}.  
However, this strategy does not work for determining representation type blockwise. After some years I had a new idea when cyclotomic quiver Hecke algebras were introduced and 
Brundan and Kleshchev proved that cyclotomic Hecke algebras are isomorphic to cyclotomic quiver Hecke algebras in affine type $A$ \cite{BK09(a)} and gave 
graded version of the author's Fock space theory \cite{BK09(b)}. 
The advantage of cyclotomic quiver Hecke algebras is that they have many easily visible idempotents, 
so that we may control the structure of various spherical subalgebras and show that 
they have wild representation type. Then, we focus on small number of cases remained, and it is reasonable to expect that we may determine representation type for these cases as well. 

In a series of papers with Park \cite{AP12}, \cite{AP13}, \cite{AP14} and with Iijima and Park \cite{AIP13},  we applied the above strategy to finite quiver Hecke algebras in type $A^{(2)}_{2\ell}, D^{(2)}_{\ell+1}, C^{(1)}_\ell$ and $A^{(1)}_\ell$ with parameters $\lambda$. 
Using the techniques we developed through the experience, it is now not difficult to determine representation type of Hecke algebras of type $B$ blockwise: we consider a slightly larger class of algebras, namely cyclotomic quiver Hecke algebras $\fqH(\beta)$ of level two in affine type $A$ because they give block algebras of the Hecke algebras of type $B$ 
as a special case \cite{LM07}. 
Then, by algebra automorphisms induced by Dynkin automorphisms and Chuang and Rouquier's derived equivalences among Weyl group orbits of weights \cite{CR08}, it suffices to 
consider 
$$
\Lambda=\Lambda_0+\Lambda_s, \;\;\text{for}\;\; 0\le s\le\ell, \;\;\text{and} \;\; \beta=\lambda^s_i+k\delta, \;\;\text{for}\;\; 0\le i\le \frac{\ell-s+1}{2}, 
$$
where $\lambda^s_i=\sum_{k=0}^s i\alpha_k + \sum_{k=1}^{i-1} (i-k)\alpha_{s+k} + \sum_{k=1}^{i-1} k\alpha_{\ell-i+1+k}$. Here, we understand that
$\lambda^s_0=0$, $\lambda^s_1=\alpha_0+\cdots+\alpha_s$. 

\medskip
In this paper we consider the case when $s\ne0$ and prove the following theorem, which is Theorem \ref{main thm} in the body of the paper.

\medskip
\noindent
\textbf{Theorem A}\;\;
Let $\Lambda=\Lambda_0+\Lambda_s$, for $1\le s\le\ell$, and $\beta=\lambda^s_i+k\delta$, for $0\le i\le \frac{\ell-s+1}{2}$ and $k\in\Z_{\ge0}$. Then 
$\fqH(\beta)$ is
\begin{itemize}
\item[(i)]
simple if $i=0$ and $k=0$.
\item[(ii)]
of finite representation type if $i=1$ and $k=0$. 
\item[(iii)]
of tame representation type if $i=0$, $k=1$ and $\ell=1$.
\item[(iv)]
of wild representation type otherwise.
\end{itemize}

For $s=0$, my student Kakei has computed the representation type \cite{Ka15} and the result is as follows. Recall that $\fqH(\beta)$ in affine type $A$ has a parameter $\lambda$.
\footnote{As he left mathematics without writing a paper to publish, I got his permission to add an appendix to this paper for making the computation 
available to anyone who wants to check the result.  See Appendix 3.}

\medskip
\noindent
\textbf{Theorem B}\;\;
Let $\Lambda=2\Lambda_0$ and $\beta=\lambda^0_i+k\delta$, for $0\le i\le \frac{\ell+1}{2}$ and $k\in\Z_{\ge0}$. Then 
$\fqH(\beta)$ is
\begin{itemize}
\item[(i)]
simple if $i=0$ and $k=0$.
\item[(ii)]
of finite representation type if $i=1$ and $k=0$. 
\item[(iii)]
of tame representation type if $\ell=1, i=0, k=1$ or $\ell\ge2, i=0, k=1, \lambda\ne(-1)^{\ell+1}$, or $\ell\ge3, i=2, k=0, {\rm char}\bR\ne2$. 
\item[(iv)]
of wild representation type otherwise.
\end{itemize}

He uses arguments from the aforementioned papers by the author and Park. In this paper, we utilize grading of the algebra more systematically for proving Theorem A. 

Further, if we combine Theorem A and Theorem B with Ohmatsu's result, whose proof 
is given in \cite{AKMW}, and with a result by Geck \cite{G}, we can prove the following.

\medskip
\noindent
\textbf{Theorem C}\;\;
Let $\bR$ be an algebraically closed field of odd characteristic, $q\ne 1, Q\in\bR^\times$ as before, and let
$\mathcal{H}$ be a Hecke algebra of classical type, $B$ a block algebra of $\mathcal{H}$. 
If $B$ is of finite representation type, then $B$ is a Brauer tree algebra whose Brauer tree 
is a straight line without exceptional vertex.

\bigskip
We conjecture that Theorem C should hold if $\mathcal{H}$ is a Hecke algebra of exceptional type and 
bad primes are invertible in $\bR$. 
One supporting evidence in type $E$ may be found in \cite{G2}.

\section{Preliminaries}

In this section, we briefly recall necessary materials. See \cite{HK02} and \cite{AIP13} for more details.

\subsection{Cartan datum} \label{Sec: Cartan datum}

Let $I = \{0,1, \ldots, \ell \}$ be an index set, and let $\cmA=(a_{ij})_{i,j,\in I}$ be the Cartan matrix of type $A_{\ell}^{(1)}$. 
We fix an affine Cartan datum $(\cmA, \wlP, \Pi, \Pi^{\vee})$, where
\begin{itemize}
\item[(1)] $\wlP$ is the weight lattice, a free abelian group of rank $\ell+1$.
\item[(2)] $\Pi = \{ \alpha_i \mid i\in I \} \subset \wlP$, the set of simple roots.
\item[(3)] $\Pi^{\vee} = \{ h_i \mid i\in I\} \subset \wlP^{\vee} := \Hom( \wlP, \Z )$, the set of simple coroots.
\end{itemize}
By definition, the simple roots and the simple coroots satisfy the following properties:
\begin{itemize}
\item[(a)] $\langle h_i, \alpha_j \rangle  = a_{ij}$ for all $i,j\in I$,
\item[(b)] $\Pi$ and $\Pi^{\vee}$ are linearly independent sets.
\end{itemize}
Then, the root lattice and its positve cone are defined by
$$
\rlQ = \sum_{i \in I} \Z \alpha_i, \quad \rlQ^+ = \sum_{i\in I} \Z_{\ge 0} \alpha_i,
$$
and the Weyl group, which we denote by $\weyl$, is the affine symmetric group. It is generated by $\{r_i\}_{i\in I}$, where
$r_i\Lambda=\Lambda-\langle h_i, \Lambda\rangle\alpha_i$, for $\Lambda\in \wlP$. The null root is
$$ \delta = \alpha_0 + \alpha_1 + \cdots + \alpha_\ell. $$

\subsection{Quantum group}

We fix a scaling element $d$ which obeys the condition $\langle d, \alpha_i\rangle=\delta_{i0}$,
and assume that $\Pi^{\vee}$ and $d$ form a $\Z$-basis of $\wlP^{\vee}$. Then, we define $U_{\bf q}(\g)$, the quantum group associated with the affine Cartan datum, as usual: it is
generated by ${\bf q}^h$, for $h\in \wlP^\vee$, and the Chevalley generators $e_i, f_i$, for $i\in I$, which obey the quantum Serre relations etc.

\subsection{Integrable highest weight modules}

We define fundamental weights $\Lambda_j$ $(j\in I)$ by
$$\langle h_i, \Lambda_j\rangle=\delta_{ij},\quad \langle d,\Lambda_j\rangle=0.$$
Then $\{\delta, \Lambda_0,\dots, \Lambda_\ell\}$ is a $\Z$-basis of $\wlP$. Moreover, we have a symmetric bilinear form $(\hphantom{-}| \hphantom{-})$ on $\wlP$ 
given by the Cartan matrix $\cmA$. 
We only use the integrable highest weight $U_{\bf q}(\g)$-modules 
$V_{\bf q}(\Lambda_0+\Lambda_s)$, for $0\le s\le\ell$, in our paper. 

\subsection{Cyclotomic quiver Hecke algebras}

Let $\bR$ be an algebraically closed field. 
We take polynomials $Q_{ij}(u,v)\in\bR[u,v]$ in the standard form, for $i,j\in I$, as follows. 
$$
Q_{01}(u,v)=u^2+\lambda uv+v^2,
$$
for the affine type $A^{(1)}_1$, and
\begin{align*}
Q_{ij}(u,v)&=u+v\;\;(j=i+1, 0\le i\le \ell-1),\\
Q_{\ell0}(u,v)&=u+\lambda v,\\
Q_{ij}(u,v)&=1\;\;(j\not\equiv i\pm1\mod(\ell+1)),
\end{align*}
for the affine type $A^{(1)}_\ell$ with $\ell\ge2$. Here $\lambda\in\bR$ is a parameter such that $\lambda\ne0$ unless $\ell=1$.
Then, we define the {\it cyclotomic quiver Hecke algebra} $\fqH(n)$ associated with
$(Q_{ij}(u,v))_{i,j\in I}$. It is a $\Z$-graded  $\bR$-algebra defined by generators
$e(\nu)$, for $\nu = (\nu_1,\ldots, \nu_n) \in I^n$, $x_1,\dots,x_n$, $\psi_1,\dots, \psi_{n-1}$, and their relations, and the 
$\Z$-grading is given by
\begin{align*}
\deg(e(\nu))=0, \quad \deg(x_k e(\nu))= ( \alpha_{\nu_k} |\alpha_{\nu_k}), \quad  \deg(\psi_l e(\nu))= -(\alpha_{\nu_{l}} | \alpha_{\nu_{l+1}}).
\end{align*}

For $\beta\in \rlQ^+$ which is a sum of $n$ simple roots, we define a central idempotent $e(\beta)$ by
$$
e(\beta) = \sum_{\nu \in I^\beta} e(\nu),\quad \text{where}\;\;
I^\beta = \left\{ \nu=(\nu_1, \ldots, \nu_n ) \in I^n \mid \sum_{k=1}^n\alpha_{\nu_k} = \beta \right\}.
$$
Then we denote $R^\Lambda(\beta) = R^\Lambda(n) e(\beta)$.

\begin{Rmk}
If $Q_{ij}(u,v)$, for $i\ne j$, are nonzero constant multiples of $(u-v)^{-a_{ij}}$ as in \cite{BK09(a)}, 
then $\lambda=(-1)^{\ell+1}$ if $\ell\ge2$, and $\lambda=-2$ if $\ell=1$, in the standard form.
Thus, $\fqH(\beta)$ for this choice of the parameter $\lambda$ and $\Lambda=\Lambda_0+\Lambda_s$ are the block algebras of $\mathcal{H}(q,-q^s)$.
\end{Rmk}

\subsection{The level two Fock space} \label{Sec: Fock space}

Let $\mathcal{F}_{\bf q}$ be the vector space over $\Q({\bf q})$ whose basis is given by
$\{ |\lambda \rangle \mid \lambda=(\lambda^{(1)}, \lambda^{(2)}) : \text{bipartition} \}$. We make bipartitions $I$-colored by declaring a residue pattern on nodes of 
bipartitions as follows. 
\begin{itemize}
\item If a node $x\in\lambda^{(1)}$ sits on the $r$-th row and the $c$-th column of $\lambda^{(1)}$, the residue of $x$ is $i\in I$ such that $i+r-c$ is divisible by $\ell+1$.
\item If a node $x\in\lambda^{(2)}$ sits on the $r$-th row and the $c$-th column of $\lambda^{(2)}$, the residue of $x$ is $i\in I$ such that $i+r-c-s$ is divisible by $\ell+1$.
\end{itemize}

Then $\mathcal{F}_{\bf q}$ is a $U_{\bf q}(\g)$-module and the empty bipartition generates a submodule which is isomorphic to $V_{\bf q}(\Lambda_0+\Lambda_s)$. 
To describe the action on $\mathcal{F}_{\bf q}$, we use the following notation:
\begin{itemize}
\item If we may remove a box of residue $i$ from $\lambda$ and obtain a new bipartition, then we write $\lambda \nearrow \sBox{i}$ for the resulting bipartition.
\item If we may add a box of residue $i$ to $\lambda$ and obtain a new bipartition, then we write $\lambda \swarrow \sBox{i}$ for the resulting bipartition.
\end{itemize}
We declare that nodes of $\lambda^{(1)}$ are above those of $\lambda^{(2)}$ in $\lambda=(\lambda^{(1)},\lambda^{(2)})$. 
Then, we define, for $\mu=\lambda \nearrow \sBox{i}$, $d_b(\lambda/\mu)$ to be the number of addable $i$-nodes below $\lambda/\mu$ minus the number of 
removable $i$-nodes below $\lambda/\mu$. Similarly, we define, for  $\mu=\lambda \swarrow \sBox{i}$, $d^a(\mu/\lambda)$ to be the number of addable $i$-nodes above $\mu/\lambda$ minus the number of removable $i$-nodes above $\mu/\lambda$. Then, the Chevalley generators act on  $\mathcal{F}_{\bf q}$ as follows. 
$$
e_i  |\lambda\rangle= \sum_{\mu=\lambda \nearrow \sBox{i}} {\bf q}^{d_b(\lambda/\mu)} |\mu\rangle, \quad
f_i  |\lambda\rangle= \sum_{\mu=\lambda \swarrow \sBox{i}} {\bf q}^{-d^a(\lambda/\mu)} |\mu\rangle.
$$

\subsection{Graded dimension formula for $\fqH(\beta)$}

Let $\ST(\lambda)$ be the set of standard bitableaux of shape $\lambda$. If $\lambda=(\lambda^{(1)},\lambda^{(2)})$ is a bipartition with $n$ nodes, 
a standard bitableau of shape $\lambda$ is a filling of nodes of $\lambda$ with $1,2,\dots,n$ such that 
\begin{itemize}
\item[(i)]
Each of $1,\dots,n$ is used exactly once.
\item[(ii)]
The numbers increase left to right in each row of $\lambda^{(1)}$ and $\lambda^{(2)}$.
\item[(iii)]
The numbers increase top to bottom in each column of $\lambda^{(1)}$ and $\lambda^{(2)}$.
\end{itemize}
For $T\in \ST(\lambda)$, we view it as an increasing sequence of bipartitions, and suppose that nodes $x_1,\dots, x_n$ are added in this order in the process of increasing 
the bipartitions. Then, we define the degree and the residue sequence of $T$ by
$$ \deg(T)=\sum_{i=1}^n d_b(x_i), \quad \residue(T)=(\residue(x_1),\dots,\residue(x_n)). $$
The next formula is proved in the entirely similar manner as \cite[Thm.4.20]{BK09(b)}.

\begin{thm} \label{Thm: dimension formula}
For $\nu, \nu' \in I^n$, we have
$$
\dim_{\bf q} e(\nu')\fqH(n)e(\nu) = \sum_{\lambda \vdash n} K_{\bf q}(\lambda, \nu') K_{\bf q}(\lambda, \nu),
$$
where $K_{\bf q}(\lambda, \nu)$ is the sum of $ {\bf q}^{\deg(T)}$ over $T\in \ST(\lambda)$ with $\residue(T)=\nu$.
\end{thm}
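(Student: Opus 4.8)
The plan is to reduce the graded dimension formula to a bookkeeping count against a homogeneous basis of $\fqH(n)$ indexed by pairs of standard bitableaux of the same shape. Concretely, I would exhibit a graded cellular basis $\{\psi^\lambda_{ST}\}$, where $\lambda$ runs over bipartitions of $n$ and $S,T\in\ST(\lambda)$, such that (1) $\psi^\lambda_{ST}$ is homogeneous of degree $\deg(S)+\deg(T)$, and (2) $e(\nu')\psi^\lambda_{ST}e(\nu)=\psi^\lambda_{ST}$ if $\residue(S)=\nu'$ and $\residue(T)=\nu$, and $=0$ otherwise. Granting such a basis, the subspace $e(\nu')\fqH(n)e(\nu)$ has homogeneous basis $\{\psi^\lambda_{ST} : \residue(S)=\nu',\ \residue(T)=\nu\}$, so that
$$
\dim_{\bf q} e(\nu')\fqH(n)e(\nu)=\sum_{\lambda\vdash n}\Big(\sum_{\substack{S\in\ST(\lambda)\\ \residue(S)=\nu'}}{\bf q}^{\deg(S)}\Big)\Big(\sum_{\substack{T\in\ST(\lambda)\\ \residue(T)=\nu}}{\bf q}^{\deg(T)}\Big),
$$
which is exactly $\sum_\lambda K_{\bf q}(\lambda,\nu')K_{\bf q}(\lambda,\nu)$.

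First I would construct, for each bipartition $\lambda$ of $n$, a graded Specht module $S^\lambda$ carrying a homogeneous basis $\{v_T : T\in\ST(\lambda)\}$ with $v_T\in e(\residue(T))S^\lambda$ and $\deg(v_T)=\deg(T)$; this is the level-two, affine type $A$ analogue of the graded Specht modules of \cite{BK09(b)}, and it immediately gives $\dim_{\bf q}e(\nu)S^\lambda=K_{\bf q}(\lambda,\nu)$. These will be the cell modules. Next I would assemble the cellular datum: applying the quiver Hecke straightening relations to the pair $v_S,v_T$ produces the elements $\psi^\lambda_{ST}$, and the degree additivity $\deg(\psi^\lambda_{ST})=\deg(S)+\deg(T)$ follows because the grading accumulates additively along the tableau-building process and the defining relations are homogeneous. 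The idempotent property in (2) is automatic once $v_T$ is known to lie in the weight space indexed by $\residue(T)$.

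The main obstacle is the basis theorem itself, namely proving that $\{\psi^\lambda_{ST}\}$ is linearly independent and spans $\fqH(n)$. As in \cite[Thm.4.20]{BK09(b)}, this is settled by comparing against the known ungraded dimension of $\fqH(n)$, whose total dimension equals the number of pairs of standard bitableaux of a common shape, together with a triangularity argument in the monomial basis $\psi_w x^a e(\nu)$; spanning and linear independence then force one another. What must be checked for the present setting is that replacing the polynomials of \cite{BK09(a)} by the standard-form $Q_{ij}(u,v)$ used here does not disturb this combinatorics. Since the degree of a node, hence $\deg(T)$ and the entire grading, depends only on the symmetric bilinear form $(\,\cdot\,|\,\cdot\,)$ attached to the affine type $A$ Cartan matrix, which is unchanged, the tableau combinatorics and the cellular straightening carry over verbatim; only the explicit structure constants differ, and these do not enter the dimension count. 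Finally, summing ${\bf q}^{\deg(\psi^\lambda_{ST})}={\bf q}^{\deg(S)+\deg(T)}$ over the basis of $e(\nu')\fqH(n)e(\nu)$ and factoring the double sum over $S$ and $T$ yields the claimed identity.
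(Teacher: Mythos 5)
Your counting step at the end is fine: once a homogeneous basis $\{\psi^\lambda_{ST}\}$ with your properties (1) and (2) exists, the formula follows by inspection. But this is not the paper's argument, and in the generality the paper needs your argument has a genuine gap. The paper proves Theorem \ref{Thm: dimension formula} ``in the entirely similar manner as \cite[Thm.4.20]{BK09(b)}'', an argument that never touches cellular bases: it works intrinsically with the presentation, by induction on $n$, using the graded branching rules for the functors $E_i,F_i$ (biadjointness and the commutation isomorphisms, which hold for arbitrary polynomials $Q_{ij}$ by \cite{KK11}, \cite{Kash11}, quoted in the paper before Lemma \ref{wildness propagation 1}); the graded dimension of $e(\nu')\fqH(n)e(\nu)\cong\Hom\bigl(\fqH(n)e(\nu'),\fqH(n)e(\nu)\bigr)$ is thereby identified with a Shapovalov-form matrix coefficient of $V_{\bf q}(\Lambda)$, which is then evaluated inside the level two Fock space of \S\ref{Sec: Fock space}; the exponents $d_b$, $d^a$ appearing there are exactly what produce the statistic $\deg(T)$. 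This is why the Fock space action is set up in the preliminaries.

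The gap in your proposal is the basis theorem itself. Your two key inputs --- the ``known ungraded dimension of $\fqH(n)$'' (the number of pairs of standard bitableaux of a common shape) and the existence of graded Specht modules and cellular elements with $\deg\psi^\lambda_{ST}=\deg S+\deg T$ --- are known, via the Hu--Mathas construction, only through the isomorphism of \cite{BK09(a)} between $\fqH(n)$ and a cyclotomic Hecke algebra, together with the Murphy basis of the latter. As the Remark in Section 1 points out, that isomorphism is available only for one value of the parameter in the standard-form polynomials, namely $\lambda=(-1)^{\ell+1}$ (resp. $\lambda=-2$ when $\ell=1$). The paper needs, and uses (notably throughout Appendix 3), Theorem \ref{Thm: dimension formula} for \emph{every} $\lambda$, and the algebras for different $\lambda$ are genuinely different: by Proposition \ref{delta for s=0}, the representation type of $\fqHH(\delta)$ changes with $\lambda$, so these algebras are not even Morita equivalent, and no identification with a Hecke algebra is available from which to import the dimension $2^nn!$ or a cellular structure. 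Your sentence ``only the explicit structure constants differ, and these do not enter the dimension count'' assumes exactly what is to be proved: the content of the theorem is that the graded dimension is independent of those structure constants. Concretely, in the cyclotomic quotient the monomials $\psi_wx^ae(\nu)$ only span, and deciding which of them survive is the whole problem; your ``spanning and linear independence force one another'' has no a priori dimension to close the loop against. To rescue your route one would need an intrinsic basis theorem valid for all standard-form $Q_{ij}$, and the known proofs of such statements are themselves deduced from dimension formulas of the present type, making the argument circular. For the single Hecke-algebra value of $\lambda$ your proof is essentially Hu--Mathas and is correct; for the paper's purposes it does not suffice, whereas the functorial route survives because the inputs of \cite{KK11} and \cite{Kash11} hold for arbitrary $Q_{ij}$.
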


\begin{lem}
Suppose that $e=e_1+e_2$ with $e_1^2=e_1\ne 0$, $e_2^2=e_2\ne 0$, $e_1e_2=e_2e_1=0$ and
$$
\dim_{\bf q} e_i\fqH(n)e_j-\delta_{ij}-c_{ij}{\bf q}^2\in {\bf q}^3\Z_{\ge0}[{\bf q}], 
$$
for $i,j=1,2$. Then the quiver of $e\fqH(n)e$ has two vertices $1$ and $2$, $c_{ii}$ loops on the vertex $i$, for $i=1,2$, and there are at least $c_{12}$ arrows and 
$c_{21}$ reverse arrows between $1$ and $2$.
\end{lem}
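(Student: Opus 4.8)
The plan is to read off the quiver of the basic-like algebra $e\fqH(n)e$ from the radical series, using the standard fact that for a finite-dimensional graded algebra $A$ with orthogonal primitive (or at least local) idempotents $e_1,e_2$ summing to $e$, the number of arrows from vertex $i$ to vertex $j$ in the Gabriel quiver equals $\dim_{\bR} e_j(\Rad A/\Rad^2 A)e_i$. Concretely, I would set $A=e\fqH(n)e$ and analyse the spaces $e_i A e_j$ degree by degree via the given hypothesis $\dim_{\bf q} e_iAe_j-\delta_{ij}-c_{ij}{\bf q}^2\in {\bf q}^3\Z_{\ge0}[{\bf q}]$. The key point is that $\fqH(n)$ is positively graded with $\fqH(n)_0$ semisimple, so degree measures radical depth: the degree-$0$ part of $e_iAe_j$ is $\delta_{ij}\bR$ (the two idempotents are the two vertices), there is nothing in degree $1$, and the degree-$2$ part has dimension exactly $c_{ij}$.

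First I would record that each $e_i$ is primitive: since $\dim_{\bf q}e_iAe_i = 1 + c_{ii}{\bf q}^2 + (\text{higher})$, the degree-zero component $e_iAe_i$ in degree $0$ is one-dimensional, so $e_iAe_i$ is a graded local ring with $e_i$ its unique primitive idempotent, giving two vertices $1$ and $2$ exactly. Next, because $\fqH(n)$ is nonnegatively graded with $\Rad A$ containing everything in positive degree, elements of degree $\ge 1$ lie in $\Rad A$. The vanishing of the degree-$1$ coefficient in every $\dim_{\bf q}e_iAe_j$ shows $(\Rad A)e_j$ has no degree-$1$ part, so $\Rad A$ is concentrated in degrees $\ge 2$; consequently $\Rad^2 A$ sits in degrees $\ge 4$ and the degree-$2$ part of $A$ maps isomorphically into $\Rad A/\Rad^2 A$. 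Therefore the arrow space $e_i(\Rad A/\Rad^2 A)e_j$ contains the full degree-$2$ piece $e_iAe_j$, of dimension $c_{ij}$: this yields $c_{ii}$ loops at vertex $i$ and at least $c_{12}$ arrows and $c_{21}$ reverse arrows between $1$ and $2$.

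The word \emph{at least} in the statement is the crucial subtlety and signals where care is needed: one only gets a lower bound, not equality, for the off-diagonal arrows. The reason is that $\Rad^2 A$ may already have a nonzero degree-$2$ contribution coming from products of degree-$1$ elements of the \emph{full} algebra $\fqH(n)$ that survive in $e\fqH(n)e$; more precisely, additional arrows between $1$ and $2$ could be hidden in higher-degree components of $e_iAe_j$ that become new generators of $\Rad A$ modulo $\Rad^2 A$ once multiplication is taken into account. For the diagonal entries the hypothesis pins down the loops exactly because any degree-$2$ element of $e_iAe_i$ is a genuine arrow (there are no degree-$1$ elements to multiply), but across vertices the minimality of the presentation is not guaranteed by dimension count alone.

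The main obstacle is thus not the existence of the claimed arrows but controlling $\Rad^2 A$ from below to justify that the degree-$2$ elements are not relations. I would handle this by invoking positivity of the grading: since $\Rad A$ lives in degrees $\ge 2$, any product of two radical elements lands in degree $\ge 4$, so $\Rad^2 A$ cannot meet degree $2$, and every degree-$2$ element is therefore a generator of $\Rad A/\Rad^2 A$ rather than a consumable product. This is exactly the structural input that converts the ${\bf q}^2$-coefficients into arrow counts, and it is the step that must be stated carefully; the remaining bookkeeping of loops versus arrows is then a routine reading of the diagonal and off-diagonal coefficients $c_{ij}$.
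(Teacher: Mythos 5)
Your argument is correct and is essentially the paper's own proof: the hypothesis makes $A=e\fqH(n)e$ positively graded with degree-zero part spanned by $e_1,e_2$, so $\Rad A$ is the strictly positive part, the diagonal algebras $e_iAe_i$ are local (giving exactly two vertices), and since the degree-one components vanish, $\Rad^2 A$ sits in degrees $\ge 4$, whence the $c_{ij}$ degree-two elements remain linearly independent in $e_i(\Rad A/\Rad^2 A)e_j$. One caveat: your third paragraph's suggestion that $\Rad^2 A$ could acquire a degree-two contribution from products of degree-one elements of the full algebra contradicts your own (correct) positivity argument in the final paragraph, and the asymmetry you draw between diagonal and off-diagonal counts is not real --- by this method both are only lower bounds (degree $\ge 3$ elements of $e_iAe_i$ could also be new generators), which is likewise all the paper's proof establishes and all that the wildness applications require.
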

\begin{proof}
Let $A=e\fqH(n)e$. Our assumption implies that $A$ is positively graded, so that strictly positive elements form $\Rad A$. Further, $e_iAe_i$, for $i=1,2$, are local algebras. Thus, 
$c_{ij}$ degree two elements give linearly independent elements of $e_i(\Rad A/\Rad^2 A)e_j$.
\end{proof}

\section{Reduction to certain maximal weights}

Recall that we use cyclotomic quiver Hecke algebras of level two instead of Hecke algebras of type $B$, because of the Brundan-Kleshchev 
isomorphism theorem \cite{BK09(a)}. 
As is mentioned in the introduction, the strategy to determine representation type of cyclotomic quiver Hecke algebras was tested in our series of papers 
\cite{AP12}, \cite{AP13}, \cite{AP14} and \cite{AIP13}. In the next two paragraphs, we briefly recall the strategy following \cite{AP12}. 

Let $\Lambda=\Lambda_0+\Lambda_s$ be a level two dominant integral weight for affine Lie type $A^{(1)}_\ell$. Then our 
categorification theorem together with a result by Lyle and Mathas \cite{LM07} asserts that 
weight spaces of the integrable highest weight module $V(\Lambda)$ over the affine Lie algebra 
$\mathfrak{g}(A^{(1)}_\ell)$, where $\ell+1$ is the quantum characteristic of $q$, are categorified by the block algebras of $\mathcal{H}(q,-q^s)$, or more generally, 
by cyclotomic quiver Hecke algebras $\fqH(\beta)$, for $\beta\in\rlQ^+$. 
Recall that these algebras are self-injective algebras, so that if two of them are derived equivalent then they are stably equivalent. 
Thus, they have the common representation type by Krause's theorem. 

Using the derived equivalence by Chuang and Rouquier, which lifts the Weyl group action, the above consideration implies that 
it suffices to consider representatives of
$\weyl$-orbits in the set of weights $\{ \mu \in \wlP \mid V(\Lambda)_{\mu}\neq 0\}$, 
in order to determine the representation type of $\fqH(\beta)$, for all $\beta=\Lambda-\mu\in\rlQ^+$. Define
\begin{align*}
\lambda^s_i&=\sum_{k=0}^s i\alpha_k + \sum_{k=1}^{i-1} (i-k)\alpha_{s+k} + \sum_{k=1}^{i-1} k\alpha_{\ell-i+1+k}, \\
\mu^s_i&=\sum_{k=0}^{i-1} (i-k)\alpha_k + \sum_{k=1}^{i-1} k\alpha_{s-i+k} + \sum_{k=1}^{\ell-s+1} i\alpha_{s-1+k}.
\end{align*}

We set $\lambda^s_0=0$. The following result may be found in \cite[Thm.1.3]{TW15}.

\begin{lem}\label{orbit representative}
Let $\weyl$ be the affine symmetric group. Then, representatives of $\weyl$-orbits in $\{ \mu\in \wlP \mid V(\Lambda_0+\Lambda_s)_\mu\neq 0\}$ are given by
$$
\Lambda_0+\Lambda_s-\lambda^s_i-k\delta \;\;(0\leq i\leq\frac{\ell+1-s}{2}),\;\;\; 
\Lambda_0+\Lambda_s-\mu^s_i-k\delta \;\;(1\leq i\leq\frac{s}{2}), \;\;\;
$$
where $k\in\Z_{\ge0}$.
\end{lem}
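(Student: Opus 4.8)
The plan is to reduce the classification of $\weyl$-orbits on the weight set to a finite computation of dominant maximal weights, and then to carry that computation out explicitly. First I would exploit the fact that $\weyl$ fixes the null root: since $\langle h_i,\delta\rangle=\sum_{j}a_{ij}=0$ for every $i$, we get $r_i\delta=\delta$ and hence $w\delta=\delta$ for all $w\in\weyl$. The weight set $\{\mu\mid V(\Lambda_0+\Lambda_s)_\mu\neq0\}$ is $\weyl$-stable and bounded above in the $\delta$-direction, because any weight lies in $\Lambda_0+\Lambda_s-\rlQ^+$ and so $\nu+k\delta$ can be a weight only for $k$ bounded by the smallest coefficient of $\Lambda_0+\Lambda_s-\nu$. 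Thus each weight $\nu$ has a well-defined $\delta$-depth $k(\nu)=\max\{k\in\Z_{\ge0}\mid V(\Lambda_0+\Lambda_s)_{\nu+k\delta}\neq0\}$, and $\sigma(\nu):=\nu+k(\nu)\delta$ is a maximal weight (i.e.\ $\sigma(\nu)+\delta$ is not a weight). Since $\weyl$ permutes weights and fixes $\delta$, both $k(\nu)$ and $\sigma(\nu)$ transform $\weyl$-equivariantly, so $\nu\mapsto(\sigma(\nu),k(\nu))$ induces a bijection between $\weyl$-orbits of weights and (orbits of maximal weights)$\times\Z_{\ge0}$. It therefore suffices to find orbit representatives among the maximal weights, each contributing the one-parameter family obtained by subtracting $k\delta$.

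Next I would use that every $\weyl$-orbit of a weight meets the dominant cone in exactly one point, so the orbits of maximal weights are indexed by the dominant maximal weights of $V(\Lambda_0+\Lambda_s)$, that is, the dominant $\mu=\Lambda_0+\Lambda_s-\beta$ with $\beta\in\rlQ^+$ such that $\mu$ is a weight but $\mu+\delta$ is not. The combinatorial key is that the fundamental weights $\Lambda_0$ and $\Lambda_s$ mark nodes $0$ and $s$ of the cyclic Dynkin diagram of type $A^{(1)}_\ell$, dividing it into an arc of length $s$ (the nodes $0,1,\dots,s$) and an arc of length $\ell+1-s$ (the nodes $s,s+1,\dots,\ell,0$). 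The two families $\lambda^s_i$ and $\mu^s_i$ arise from symmetrically folding these two arcs, which accounts for the ranges $0\le i\le\frac{\ell+1-s}{2}$ and $1\le i\le\frac{s}{2}$ and for the two distinct shapes of the defining sums.

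Finally I would verify the list in three steps. For each candidate I compute the pairings $\langle h_j,\Lambda_0+\Lambda_s-\lambda^s_i\rangle$ and $\langle h_j,\Lambda_0+\Lambda_s-\mu^s_i\rangle$ directly from the defining expressions, confirming that all are nonnegative, so the weights are dominant; as they lie in $\Lambda_0+\Lambda_s-\rlQ^+$, they are genuine weights. Maximality is then checked by showing that adding $\delta$ produces a weight outside $\Lambda_0+\Lambda_s-\rlQ^+$ up to the $\weyl$-action, i.e.\ that $\Lambda_0+\Lambda_s-\lambda^s_i+\delta$ (resp.\ with $\mu^s_i$) is not $\weyl$-conjugate to a dominant weight below $\Lambda_0+\Lambda_s$. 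The main obstacle is completeness: showing that no further dominant maximal weights exist. I expect this to require an induction on the $\delta$-depth combined with the arc bookkeeping above, proving that any dominant $\mu\le\Lambda_0+\Lambda_s$ not appearing on the list has $\mu+\delta$ again a weight and hence fails maximality. This recovers the statement recorded in \cite[Thm.~1.3]{TW15}.
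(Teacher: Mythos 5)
The paper does not prove this lemma at all --- it simply cites \cite[Thm.~1.3]{TW15} --- so the content you must supply is exactly what that reference proves: the classification of dominant maximal weights of $V(\Lambda_0+\Lambda_s)$. Your structural reduction is sound and is the standard route: $\weyl$ fixes $\delta$, every weight is (maximal weight)$\,-\,k\delta$ with $k\in\Z_{\ge0}$, orbits of weights correspond to (orbits of maximal weights)$\,\times\,\Z_{\ge0}$, and since $\Lambda_0+\Lambda_s$ has positive level every orbit contains exactly one dominant weight. Combined with the standard fact that a dominant $\mu$ with $\Lambda_0+\Lambda_s-\mu\in\rlQ^+$ is automatically a weight, this correctly reduces the lemma to the claim that the dominant maximal weights are precisely $\Lambda_0+\Lambda_s-\lambda^s_i$ ($0\le i\le\frac{\ell+1-s}{2}$) and $\Lambda_0+\Lambda_s-\mu^s_i$ ($1\le i\le\frac{s}{2}$).

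The genuine gap is that this classification, which is the entire content of the lemma, is never carried out, and the sketch you offer for it is misdirected. First, maximality is far simpler than your test of non-conjugacy to a dominant weight below $\Lambda_0+\Lambda_s$: since $\langle h_j,\delta\rangle=0$, the weight $\mu+\delta$ is dominant whenever $\mu$ is, so no Weyl conjugation is involved; a dominant weight $\Lambda_0+\Lambda_s-\beta$ is maximal if and only if $\beta-\delta\notin\rlQ^+$, i.e.\ some simple-root coefficient of $\beta$ vanishes. Second, given this criterion, completeness is a static combinatorial problem, not an ``induction on the $\delta$-depth'' --- all maximal weights have depth zero by definition, so that induction has no content. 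Concretely, writing $\beta=\sum_j b_j\alpha_j$, dominance of $\Lambda_0+\Lambda_s-\beta$ reads $b_{j-1}-2b_j+b_{j+1}\ge-\delta_{j,0}-\delta_{j,s}$ for all $j\in\Z/(\ell+1)\Z$: the sequence $(b_j)$ is convex along the two arcs of the cycle separated by the nodes $0$ and $s$, with a concavity defect of at most $1$ allowed at each of those two nodes. One must then show that a nonnegative such sequence with a zero entry is forced to be one of the ``tent'' shapes $\lambda^s_i$ (zero entries inside the arc through $s+1,\dots,\ell$) or $\mu^s_i$ (zero entries inside the arc through $1,\dots,s-1$), the stated ranges of $i$ being exactly the condition for a zero entry to exist; the key step is a slope argument showing the increments can rise and then fall only by spending the two unit defects at $0$ and $s$, which pins down the linear-constant-linear profile, and which also shows that zeros inside both arcs force $\beta=0$. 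This elementary convexity analysis is precisely where the lemma lives; without it, your proposal reduces the statement to itself.
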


\section{Isomorphisms induced by Dynkin automorphisms}

\begin{lem}\label{isom lemma}
Let $\sigma: I\simeq I$ be a Dynkin automorphism, namely a bijective map that satisfies $a_{\sigma(i)\sigma(j)}=a_{ij}$, for $i,j\in I$. For 
$\beta=\sum_{i\in I} b_i\alpha_i\in \rlQ^+$ and $\Lambda=\sum_{i\in I} c_i\Lambda_i\in \wlP^+$, we define 
$$
\sigma\beta=\sum_{i\in I} b_i\alpha_{\sigma(i)}, \quad \sigma\Lambda=\sum_{i\in I} c_i\Lambda_{\sigma(i)}.
$$
Then, $\fqH(\beta)$ defined with $Q_{ij}(u,v)$ is isomorphic to $R^{\sigma\Lambda}(\sigma\beta)$ defined with $Q'_{ij}(u,v)$ such that
$$
Q'_{\sigma(i)\sigma(j)}(u,v)=Q_{ij}(u,v).
$$
\end{lem}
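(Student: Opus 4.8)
The plan is to exhibit an explicit homomorphism on generators and check that it respects all the defining relations of the cyclotomic quiver Hecke algebra. Writing $\sigma\nu=(\sigma(\nu_1),\dots,\sigma(\nu_n))$ for $\nu=(\nu_1,\dots,\nu_n)\in I^n$, I first record that $\sigma$ induces a bijection $I^\beta\to I^{\sigma\beta}$: if $\sum_k\alpha_{\nu_k}=\beta=\sum_i b_i\alpha_i$, then $\sum_k\alpha_{\sigma(\nu_k)}=\sum_i b_i\alpha_{\sigma(i)}=\sigma\beta$. I then define $\Phi\colon\fqH(\beta)\to R^{\sigma\Lambda}(\sigma\beta)$ on generators by $\Phi(e(\nu))=e(\sigma\nu)$, $\Phi(x_ke(\nu))=x_ke(\sigma\nu)$ and $\Phi(\psi_le(\nu))=\psi_le(\sigma\nu)$, and the task is to verify that $\Phi$ extends to an algebra homomorphism, that it is graded, and that it is invertible.

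Most of the verification becomes immediate once two facts are in place. First, since the bilinear form $(\,\cdot\,|\,\cdot\,)$ is determined by $\cmA$ and $\sigma$ preserves $\cmA$, we have $(\alpha_{\sigma(i)}\,|\,\alpha_{\sigma(j)})=(\alpha_i\,|\,\alpha_j)$ for all $i,j$; hence $\Phi$ preserves the degree of each generator, using the degree formulas $\deg(x_ke(\nu))=(\alpha_{\nu_k}\,|\,\alpha_{\nu_k})$ and $\deg(\psi_le(\nu))=-(\alpha_{\nu_l}\,|\,\alpha_{\nu_{l+1}})$. The purely combinatorial relations---idempotent orthogonality, commutation of the $x_k$, the intertwiner relation $\psi_le(\nu)=e(\nu')\psi_l$ with $\nu'$ obtained by swapping entries $l,l+1$, the far commutation of the $\psi$, and the $\psi_kx_l$ relations, whose right-hand sides depend on $\nu$ only through the predicate $\nu_k=\nu_{k+1}$---transport verbatim because $\sigma$ is injective, so $\nu_k=\nu_{k+1}$ if and only if $\sigma(\nu_k)=\sigma(\nu_{k+1})$. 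Second, the only relations that involve the polynomials are the quadratic relation $\psi_k^2e(\nu)=Q_{\nu_k,\nu_{k+1}}(x_k,x_{k+1})e(\nu)$ and the $\psi$-braid relation, whose right-hand side when $\nu_k=\nu_{k+2}$ is the divided difference $\frac{Q_{\nu_k,\nu_{k+1}}(x_k,x_{k+1})-Q_{\nu_k,\nu_{k+1}}(x_{k+2},x_{k+1})}{x_k-x_{k+2}}\,e(\nu)$. Applying $\Phi$ relabels $\nu$ by $\sigma\nu$, and in the target algebra the coefficient polynomials are $Q'_{\sigma(\nu_k),\sigma(\nu_{k+1})}$; the hypothesis $Q'_{\sigma(i)\sigma(j)}=Q_{ij}$ makes both sides coincide. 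Finally, for the cyclotomic relation $x_1^{\langle h_{\nu_1},\Lambda\rangle}e(\nu)=0$ I compute $\langle h_{\sigma(i)},\sigma\Lambda\rangle=\sum_j c_j\langle h_{\sigma(i)},\Lambda_{\sigma(j)}\rangle=\sum_j c_j\delta_{ij}=\langle h_i,\Lambda\rangle$, so the exponent of $x_1$ is unchanged and the cyclotomic ideal is carried into the cyclotomic ideal.

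To upgrade $\Phi$ to an isomorphism, I apply the same construction to the inverse Dynkin automorphism $\sigma^{-1}$ (which also preserves $\cmA$), to the weight $\sigma\Lambda$, the element $\sigma\beta$, and the polynomials $Q'$. This yields a homomorphism $\Psi\colon R^{\sigma\Lambda}(\sigma\beta)\to R^{\sigma^{-1}\sigma\Lambda}(\sigma^{-1}\sigma\beta)=\fqH(\beta)$, and the target is defined with the polynomials $Q''$ satisfying $Q''_{ab}=Q'_{\sigma(a)\sigma(b)}=Q_{ab}$, i.e. $Q''=Q$, so $\Psi$ genuinely lands in $\fqH(\beta)$ as given. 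Comparing the two maps on generators gives $\Psi\circ\Phi=\id$ and $\Phi\circ\Psi=\id$. I do not expect a real obstacle: the entire argument is bookkeeping, and the one place that demands care is matching the $Q$-dependent right-hand sides of the quadratic and braid relations under the relabelling $\nu\mapsto\sigma\nu$, which is precisely what the defining condition $Q'_{\sigma(i)\sigma(j)}=Q_{ij}$ is designed to ensure.
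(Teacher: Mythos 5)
Your proof is correct and follows exactly the paper's approach: the paper's proof is precisely the assignment $e(\nu)\mapsto e(\sigma\nu)$, $x_k\mapsto x_k$, $\psi_l\mapsto\psi_l$, with the verification of the relations left to the reader. You have simply written out that verification (degree preservation, the $Q$-dependent relations matching via $Q'_{\sigma(i)\sigma(j)}=Q_{ij}$, the cyclotomic relation via $\langle h_{\sigma(i)},\sigma\Lambda\rangle=\langle h_i,\Lambda\rangle$, and inversion via $\sigma^{-1}$) in full detail.
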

\begin{proof}
We check that the assignment $e(\nu_1\cdots\nu_n)\mapsto e(\sigma(\nu_1)\cdots\sigma(\nu_n))$, $\psi_k\mapsto \psi_k$, $x_k\mapsto x_k$ defines an isomorphism of algebras.
\end{proof}

\begin{prop}\label{Dynkin autom}
Suppose that we are in the affine type $A^{(1)}_\ell$ and consider
$$
\sigma:\Z/(\ell+1)\Z\simeq \Z/(\ell+1)\Z
$$
given by $\sigma(i)=i+1$. Then, using the same $Q_{ij}(u,v)$ in the 
standard form, we have an isomorphism of algebras $R^\Lambda(\beta)\simeq R^{\sigma\Lambda}(\sigma\beta)$.
\end{prop}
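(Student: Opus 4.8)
The plan is to deduce the proposition from Lemma~\ref{isom lemma}. Applying that lemma to the rotation $\sigma(i)=i+1$ gives $\fqH(\beta)\simeq R^{\sigma\Lambda}(\sigma\beta)$ defined with the polynomials $Q'_{ab}(u,v)=Q_{a-1,b-1}(u,v)$, the indices being read modulo $\ell+1$. First I would compute $Q'$ explicitly from the standard form: one finds that $Q'$ is again of the standard shape, with $Q'_{i,i+1}(u,v)=u+v$ for $1\le i\le\ell$ and $Q'_{0,1}(u,v)=u+\lambda v$, so that $Q'$ differs from the standard $Q$ only in that the distinguished parameter $\lambda$ has moved from the edge $\ell\to 0$ to the edge $0\to 1$ of the cyclic quiver. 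For $\ell=1$ the polynomial $Q_{01}$ is symmetric and $Q'=Q$, so there is nothing to prove. Thus the proposition reduces, for $\ell\ge 2$, to the assertion that two standard forms differing only in which edge carries $\lambda$ define isomorphic cyclotomic quiver Hecke algebras for the same $\sigma\Lambda$ and $\sigma\beta$.

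To produce this last isomorphism $\rho\colon R^{\sigma\Lambda}(\sigma\beta;Q')\to R^{\sigma\Lambda}(\sigma\beta;Q)$ I would exploit that the placement of $\lambda$ is visible in only one of the defining relations. Indeed, the commutation relations between the $\psi$'s and the $x$'s do not involve $Q$ at all, and the braid relation is also insensitive to $\lambda$: when $\nu_r=\nu_{r+2}$ its right hand side is the difference quotient $\bigl(Q_{\nu_r\nu_{r+1}}(x_r,x_{r+1})-Q_{\nu_r\nu_{r+1}}(x_{r+2},x_{r+1})\bigr)/(x_r-x_{r+2})$, and for a linear polynomial $u+cv$ the middle variable cancels, leaving $e(\nu)$ regardless of $c\in\{1,\lambda\}$. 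Hence only the quadratic relation $\psi_r^2 e(\nu)=Q_{\nu_r\nu_{r+1}}(x_r,x_{r+1})e(\nu)$ feels the move of $\lambda$, and the task is to adjust the generators so as to turn the $Q$-quadratic relations into the $Q'$-quadratic relations on the two edges incident to residue $0$, leaving everything else fixed. Having written down such a candidate $\rho$ on generators, I would check that it is a well-defined surjective homomorphism and then upgrade it to an isomorphism by comparing graded dimensions: by Theorem~\ref{Thm: dimension formula} the graded dimension of $e(\nu')R^{\sigma\Lambda}(\sigma\beta)e(\nu)$ is computed from standard bitableaux and is independent of $\lambda$ and of its placement, so a degree-preserving surjection between the two algebras is automatically bijective.

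The main obstacle is the construction of $\rho$ on the $\psi_r$ attached to edges incident to residue $0$. The naive device of rescaling $x_k e(\nu)\mapsto c_{\nu_k}x_k e(\nu)$ by a residue-dependent constant, which formally converts $u+v$ into $u+\lambda v$, is incompatible with the ($\lambda$-independent) relation $\psi_r x_r e(\nu)=x_{r+1}\psi_r e(\nu)$ for $\nu_r\ne\nu_{r+1}$, since that relation forces $c_{\nu_r}=c_{\nu_{r+1}}$ across every edge and hence forces all $c_i$ equal. I would therefore transport the quadratic relation not by a plain residue rescaling but by a more careful choice of generators --- correcting $\psi_r$ by a unit of the (nilpotent) polynomial part on the relevant idempotents, or letting the rescaling depend on the word $\nu$ through a gauge factor whose holonomy around the cycle equals $\lambda$ --- so that the $\psi x$-relations are preserved while the quadratic relation is carried one edge along the cycle. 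This is exactly where the equality of the cycle invariant (the product of the edge coefficients, which equals $\lambda$ for both $Q$ and $Q'$) is used; it is the only genuinely non-formal point, the surjectivity and the dimension comparison being then routine.
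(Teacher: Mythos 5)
Your opening step coincides with the paper's: apply Lemma \ref{isom lemma} to $\sigma(i)=i+1$, so that the problem becomes comparing, for the same $\sigma\Lambda$ and $\sigma\beta$, the algebra defined by the standard family $Q$ with the one defined by the rotated family $Q'$, which differs from $Q$ only in that the coefficient $\lambda$ sits on the edge $0\to 1$ instead of $\ell\to 0$; your remark that for $\ell=1$ one has $Q'=Q$, so nothing remains to prove there, is also correct. The genuine gap is that you never construct the isomorphism between the algebras attached to $Q$ and $Q'$ for $\ell\ge2$: the paragraph that should contain it offers only two speculative devices (``correcting $\psi_r$ by a unit of the polynomial part'', ``a gauge factor whose holonomy around the cycle equals $\lambda$''), and you yourself flag this as the only non-formal point. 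A plan that stops exactly at the step where the work lies is not a proof.

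Moreover, the reason you give for abandoning the constant-rescaling approach rests on a miscalculation, and that approach is precisely how the paper concludes. In the relation $x_{r+1}\psi_r e(\nu)=\psi_r x_r e(\nu)$ (for $\nu_r\ne\nu_{r+1}$) one has $x_{r+1}\psi_r e(\nu)=x_{r+1}e(s_r\nu)\psi_r$, so under a residue-dependent rescaling of the $x$'s this term picks up the factor $c_{(s_r\nu)_{r+1}}=c_{\nu_r}$, not $c_{\nu_{r+1}}$; the relation therefore imposes no condition and certainly does not force all $c_i$ to be equal. In addition, one is free to rescale the $\psi$'s as well as the $x$'s: then the $\psi x$-relations with distinct residues scale identically on both sides, the $\delta$-terms merely tie the diagonal scalings of $x$ and $\psi$ together, and the quadratic and braid relations amount exactly to the standard-form equation $Q'_{ij}(u,v)=c_{ij}^2Q_{ij}(c_{ii}u,c_{jj}v)$ for a symmetric matrix of nonzero scalars $(c_{ij})$. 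This is the content of the paper's notion of standard form, and the paper finishes simply by exhibiting such a matrix: $c_{00}=1$, $c_{ii}=\lambda$ for $1\le i\le\ell$, $c_{01}=1$, $c_{i,i+1}=1/\sqrt{\lambda}$ for $1\le i\le\ell$ (indices mod $\ell+1$), and $c_{ij}=1$ otherwise. Since the resulting generator rescaling has an inverse of the same shape, no surjectivity argument or graded dimension comparison is needed. Your ``holonomy'' observation is the right heuristic for why such a matrix exists (the ratio product around the cycle is invariant under rescaling and agrees for $Q$ and $Q'$), but the proposal as written dismisses the correct mechanism on erroneous grounds and leaves the decisive construction undone.
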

\begin{proof}
First we suppose that $\ell\ge2$ and let $Q'_{ij}(u,v)$, for $i\ne j$, be such that $Q'_{01}(u,v)=u+\lambda v$, $Q'_{ij}(u,v)=u+v$ if $j=i+1, 1\le i\le \ell$ and  $Q'_{ij}(u,v)=1$ otherwise, Then, 
Lemma \ref{isom lemma} implies that it suffices to show that the standard form of $Q'_{ij}(u,v)$ is $Q_{ij}(u,v)$, that is, we show the existence of a symmetric 
matrix $(c_{ij})_{i,j\in I}$ which satisfies
$$
Q'_{ij}(u,v)=c_{ij}^2Q_{ij}(c_{ii}u,c_{jj}v), \;\; c_{ij}\ne0, \;\text{for all $i,j\in I$}.
$$
It is satisfied by choosing $c_{00}=1$, $c_{ii}=\lambda$, for $1\le i\le \ell$, and 
$$
c_{ij}=\begin{cases}
1 \quad &(i=0, j=1) \\
1/\sqrt{\lambda} \quad &(j=i+1, 1\le i\le \ell)\\
1 \quad &(\text{otherwise}).\end{cases}
$$
The proof for the case $\ell=1$ follows from the similar argument.
\end{proof}

By Proposition \ref{Dynkin autom}, it suffices to consider $\Lambda=\Lambda_0+\Lambda_s$, for $0\le s\le\ell$, 
in order to study representation type of $\fqH(\beta)$ for any level two $\Lambda$. Further, we have the following corollary.

\begin{cor}\label{mu and lambda}
Suppose that we are in the affine type $A^{(1)}_\ell$ and $\Lambda=\Lambda_0+\Lambda_s$, for $1\le s\le \ell$. Then we have the following isomorphism of algebras
$$
\fqH(\mu^s_i+k\delta)\simeq \fqH(\lambda^{\ell-s+1}_i+k\delta), \;\;\text{for $1\le i\le \frac{s}{2}$ and $k\in \Z_{\ge0}$.}
$$
\end{cor}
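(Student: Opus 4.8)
The plan is to obtain the isomorphism as an iterate of the rotation automorphism of Proposition \ref{Dynkin autom}, so that the parameter $\lambda$ is never disturbed. Let $\sigma(i)=i+1$ on $\Z/(\ell+1)\Z$ be the rotation of Proposition \ref{Dynkin autom}, and set $\tau=\sigma^{\ell+1-s}$, that is, $\tau(i)\equiv i+(\ell+1-s)\pmod{\ell+1}$. Since $\tau$ is a composite of copies of $\sigma$, applying Proposition \ref{Dynkin autom} repeatedly gives an isomorphism of algebras $\fqH(\beta)\simeq R^{\tau\Lambda}(\tau\beta)$, defined with the very same polynomials $Q_{ij}(u,v)$ in the standard form (in particular with the same $\lambda$), for every $\beta\in\rlQ^+$. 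The decisive point in choosing a rotation rather than a reflection is that an orientation-reversing Dynkin automorphism would move the distinguished edge and force $\lambda\mapsto\lambda^{-1}$, whereas Proposition \ref{Dynkin autom} keeps the standard form intact.

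Next I would specialize to $\Lambda=\Lambda_0+\Lambda_s$ and $\beta=\mu^s_i+k\delta$. Because $\tau$ sends $\Lambda_j$ to $\Lambda_{\tau(j)}$ and permutes the simple roots, we get $\tau\Lambda=\Lambda_{\tau(0)}+\Lambda_{\tau(s)}=\Lambda_{\ell+1-s}+\Lambda_0=\Lambda_0+\Lambda_{\ell-s+1}$ (note $1\le\ell+1-s\le\ell$) and $\tau\delta=\delta$. Thus the right-hand side is $R^{\Lambda_0+\Lambda_{\ell-s+1}}(\tau(\mu^s_i)+k\delta)$, and everything reduces to the purely combinatorial identity $\tau(\mu^s_i)=\lambda^{\ell-s+1}_i$, after which $\tau(\mu^s_i+k\delta)=\lambda^{\ell-s+1}_i+k\delta$ and the corollary is immediate.

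To verify this identity I would apply $\tau$ termwise to the three sums defining $\mu^s_i$, recording every index modulo $\ell+1$. With $c=\ell+1-s$ one finds that the first sum $\sum_{k=0}^{i-1}(i-k)\alpha_k$ becomes $\sum_{k=0}^{i-1}(i-k)\alpha_{\ell+1-s+k}$, the second sum $\sum_{k=1}^{i-1}k\alpha_{s-i+k}$ becomes $\sum_{k=1}^{i-1}k\alpha_{\ell+1-i+k}$, and the third sum $\sum_{k=1}^{\ell-s+1}i\alpha_{s-1+k}$ is the only one that wraps past the vertex $0$, turning into $i(\alpha_0+\alpha_1+\cdots+\alpha_{\ell-s})$. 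Comparing with $\lambda^{\ell-s+1}_i=\sum_{k=0}^{\ell-s+1}i\alpha_k+\sum_{k=1}^{i-1}(i-k)\alpha_{\ell-s+1+k}+\sum_{k=1}^{i-1}k\alpha_{\ell-i+1+k}$, I would match the blocks: the leading constant block $i(\alpha_0+\cdots+\alpha_{\ell-s+1})$ of $\lambda^{\ell-s+1}_i$ is assembled from the wrapped third sum together with the $k=0$ term of the transformed first sum, while the remaining two transformed sums reproduce the two triangular blocks of $\lambda^{\ell-s+1}_i$.

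The main obstacle is the index bookkeeping in this last step, and this is exactly where the hypothesis $1\le i\le\frac{s}{2}$ is used. One must confirm that the three transformed blocks are pairwise disjoint in support, so that no coefficients get added together, and that only the third sum crosses $0$. For example the inequality $\ell-s+i<\ell+2-i$, equivalent to $2i<s+2$, separates the images of the first and second sums, and the analogous bounds keep the wrapped third block inside $[0,\ell-s]$; all of these follow from $i\le\frac{s}{2}$ and $s\le\ell$. Once disjointness is in place the coefficient comparison is routine, and the identity $\tau(\mu^s_i)=\lambda^{\ell-s+1}_i$ follows, completing the proof.
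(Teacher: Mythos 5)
Your proof is correct and takes essentially the same route as the paper: the corollary is stated there as an immediate consequence of Proposition \ref{Dynkin autom}, obtained by iterating the rotation $\sigma$ exactly $\ell+1-s$ times, which sends $\Lambda_0+\Lambda_s$ to $\Lambda_0+\Lambda_{\ell-s+1}$ and fixes $\delta$. Your verification of the combinatorial identity $\tau(\mu^s_i)=\lambda^{\ell-s+1}_i$, including the support-disjointness bounds coming from $i\le\frac{s}{2}$, simply fills in the bookkeeping that the paper leaves implicit.
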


By Corollary \ref{mu and lambda}, it suffices to consider $\fqH(\lambda^s_i+k\delta)$, for $1\le i\le \frac{\ell-s+1}{2}$, to determine the representation type of $\fqH(\beta)$.

\section{Representation type for $i=0$}

\begin{prop}\label{i=0}
Suppose that $\Lambda=\Lambda_0+\Lambda_s$ with $1\le s\le \ell$. Then $\fqH(\delta)$ is of wild representation type unless $\ell=1$. 
If $\ell=1$ then $\fqH(\delta)$ is of tame representation type. 
\end{prop}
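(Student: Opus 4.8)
The plan is to determine the representation type of $\fqH(\delta)$ from the Gabriel quivers of suitable spherical subalgebras $e\fqH(\delta)e$, computing these from the graded dimension formula of Theorem~\ref{Thm: dimension formula} together with the lemma following it. Since $\beta=\delta$ is a sum of $\ell+1$ simple roots in which each residue $0,1,\dots,\ell$ occurs exactly once, $I^\delta$ consists of the orderings of $(0,1,\dots,\ell)$, and the cyclotomic relation $x_1^{\langle h_{\nu_1},\Lambda\rangle}e(\nu)=0$ already forces $e(\nu)=0$ unless $\nu_1\in\{0,s\}$, cutting down the admissible idempotents sharply. First I would enumerate the bipartitions of content $\delta$ together with their standard bitableaux, and for each ordered pair $(\nu',\nu)$ of admissible residue sequences expand $\dim_{\bf q}e(\nu')\fqH(\delta)e(\nu)=\sum_\lambda K_{\bf q}(\lambda,\nu')K_{\bf q}(\lambda,\nu)$ up to order ${\bf q}^2$; feeding the coefficients into the quiver lemma yields the loops and arrows of $e\fqH(\delta)e$.

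For $\ell\ge2$ the basic observation is that every $\nu\in I^\delta$ has distinct entries, so its stabiliser in the symmetric group is trivial; hence $e(\nu)\fqH(\delta)e(\nu)$ is spanned by $x$-monomials, is commutative local, $e(\nu)$ is primitive, and it contributes a single vertex carrying $c_{\nu\nu}$ loops, its graded dimension involving only even powers of ${\bf q}$. I would then choose two admissible sequences $\nu_1,\nu_2$ whose minimal connecting elements have degree $2$, so that the lemma applies to $e=e(\nu_1)+e(\nu_2)$, and read off the loops $c_{ii}$ and the arrows $c_{12},c_{21}$. The goal is a configuration — two loops at one vertex together with an arrow in each direction, or a double arrow with a further incident edge — whose radical-square-zero quotient $e\fqH(\delta)e/\Rad^2$ has a separated quiver properly containing a Euclidean diagram such as $\tilde{A}_1$, and is therefore wild. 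Because both idempotent truncation and passage to a quotient preserve wildness, this forces $\fqH(\delta)$ to be wild.

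For $\ell=1$ everything can be computed by hand. Here $\Lambda=\Lambda_0+\Lambda_1$, $n=2$ and $I^\delta=\{(0,1),(1,0)\}$; since $\langle h_i,\Lambda\rangle=1$ for $i=0,1$ we get $x_1=0$, and the surviving relations reduce to $\psi_1 x_2=x_2\psi_1=0$ and $\psi_1^2e(\nu)=x_2^2e(\nu)$. Writing $e_0=e(0,1)$, $e_1=e(1,0)$, loops $y_i=x_2e_i$ and arrows $a=\psi_1e(1,0)$, $b=\psi_1e(0,1)$, this presents $\fqH(\delta)$ as the algebra with relations $y_0a=ay_1=by_0=y_1b=0$, $ab=y_0^2$, $ba=y_1^2$, $y_i^3=0$, of dimension $8$ and with Cartan matrix $\left(\begin{smallmatrix}3&1\\1&3\end{smallmatrix}\right)$ (all confirmed by Theorem~\ref{Thm: dimension formula}). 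This algebra is symmetric and special biserial, hence tame; and it is not of finite type, since a self-injective algebra of finite type is a Brauer tree algebra while $\left(\begin{smallmatrix}3&1\\1&3\end{smallmatrix}\right)$ is not a Brauer tree Cartan matrix — realising it would demand two distinct exceptional vertices. Thus $\fqH(\delta)$ is of tame representation type.

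The main obstacle is the wild case, and specifically the small cases such as $\ell=2$, where a single primitive idempotent $e(\nu)$ yields only two loops (because $x_1e(\nu)=0$), and a commutative local algebra with two-dimensional $\Rad/\Rad^2$ is at worst tame; genuinely multi-vertex arguments are therefore unavoidable. The quiver lemma only gives lower bounds for loops and arrows and requires the graded dimension to have no degree-one off-diagonal term and first nonzero off-diagonal contribution in degree $2$, which severely constrains the admissible pairs $\nu_1,\nu_2$. Hence the real work is a finite but delicate bookkeeping of low-degree standard bitableaux, arranging that a non-Dynkin, non-Euclidean separated quiver appears uniformly for all $\ell\ge2$ and $1\le s\le\ell$. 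Once such a wild configuration is exhibited, the passage to the radical-square-zero quotient removes any dependence on the higher relations, so no further analysis of $\fqH(\delta)$ is needed.
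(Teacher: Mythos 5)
Your treatment of $\ell=1$ is complete, correct, and essentially identical to the paper's: the eight-dimensional algebra you present by quiver and relations is exactly the Brauer graph algebra of the line $1-2-3$ with the two end vertices exceptional of multiplicity $2$ (symmetric special biserial), hence tame and not of finite type. The framework you set up for $\ell\ge2$ --- truncate by $e=e(\nu_1)+e(\nu_2)$, compute graded dimensions via Theorem \ref{Thm: dimension formula}, feed the degree-two coefficients into the quiver lemma, and conclude wildness from a separated quiver properly containing $\tilde{A}_1$, using that idempotent truncation and passing to quotients preserve wildness --- is also exactly the paper's. But for $\ell\ge2$ you stop at the framework: you never exhibit the pair $(\nu_1,\nu_2)$, never compute a single graded dimension, and you yourself defer ``the real work'' of bookkeeping standard bitableaux so that a wild configuration appears uniformly in $s$ and $\ell$. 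That bookkeeping \emph{is} the proof; without it the wild case is an announcement, not an argument. The paper resolves it with one uniform choice,
$$
e_1=e(0,1,\dots,\ell), \qquad e_2=e(s,s+1,\dots,\ell,0,1,\dots,s-1),
$$
for which the dimension formula gives $\dim_{\bf q}e_i\fqH(\delta)e_i=1+c_i{\bf q}^2+{\bf q}^4$ and $\dim_{\bf q}e_1\fqH(\delta)e_2=\dim_{\bf q}e_2\fqH(\delta)e_1={\bf q}^2$, where $c_1=2$ or $c_2=2$ holds precisely when $\ell\ge2$; the quiver lemma then yields two loops at one vertex together with arrows $1\rightarrow2$ and $2\rightarrow1$, which is your wild configuration. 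Note that this verification has genuine content: one must check the absence of odd-degree terms (otherwise the quiver lemma does not apply) and that the off-diagonal graded dimension is exactly ${\bf q}^2$, and this is where the enumeration of bipartitions of content $\delta$ and their standard bitableaux is actually used. Until such a pair is produced and these coefficients computed, your proposal does not prove wildness for any $\ell\ge2$.

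Two smaller corrections. Your side remark that a commutative local algebra with two-dimensional $\Rad/\Rad^2$ is ``at worst tame'' is false in general (for instance $\bR[x,y]/(x,y)^3$ is wild); what is true, and all you need, is that two loops at a single vertex give no wildness conclusion by the separated-quiver test, so multi-vertex truncations are indeed unavoidable. Also, in the $\ell=1$ case, the implication ``self-injective of finite type $\Rightarrow$ Brauer tree algebra'' needs qualification (it concerns symmetric algebras, with nonstandard exceptions in characteristic $2$); it is cleaner to observe that your algebra is a Brauer graph algebra, and a Brauer graph algebra is of finite type only if its graph is a tree with at most one exceptional vertex, whereas yours has two.
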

\begin{proof}
We consider $A=e\fqH(\delta)e$, for $e=e_1+e_2$, where $e_1$ and $e_2$ are given by 
$$
e_1=e(0,1,\dots,\ell), \;\; e_2=e(s,s+1,\dots,\ell,0,1,\dots,s-1).
$$
Then, the graded dimension formula shows 
\begin{align*}
\dim_{\bf q} e_i\fqH(\delta)e_i&=1+c_i{\bf q}^2+{\bf q}^4, \;\;\text{for $i=1, 2$,}\\
\dim_{\bf q} e_1\fqH(\delta)e_2&=\dim_{\bf q} e_2\fqH(\delta)e_1={\bf q}^2,
\end{align*}
such that $c_1=2$ or $c_2=2$ holds if and only if $\ell\ge2$. 
Therefore, $e\fqH(\delta)e_1$ and  $e\fqH(\delta)e_2$ are pairwise non-isomorphic indecomposable projective $A$-modules, and 
if $\ell\ge2$ then the quiver of $A$ has two loops on one of the two vertices $1$ and $2$ and there are arrows $1\rightarrow 2$ and 
$1\leftarrow 2$. Thus, $\fqH(\delta)$ is wild if $\ell\ge2$. If $\ell=1$ then $A=\fqH(\delta)$ and $c_1=c_2=1$. We may determine the 
radical series of $Ae_i$, for $i=1,2$, from graded dimensions $\dim_{\bf q} e_iAe_j$ and it follows that 
$A$ is the Brauer graph algebra for the graph $1-2-3$ with two exceptional vertices $1$ and $3$ with multiplicity $2$. Hence, $\fqH(\delta)$ is tame if $\ell=1$.
\end{proof}

\begin{prop}\label{ell=1}
If $\ell=1$ and $\Lambda=\Lambda_0+\Lambda_1$, then $\fqH(2\delta)$ is of wild representation type.
\end{prop}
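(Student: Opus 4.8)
The plan is to realize wildness already on a single corner algebra. I will use the standard fact that if $e$ is an idempotent and the corner algebra $e\fqH(2\delta)e$ is of wild representation type, then so is $\fqH(2\delta)$; thus it suffices to produce one homogeneous idempotent $e$ for which $e\fqH(2\delta)e$ is manifestly wild. The cleanest possible witness is a \emph{local} corner algebra whose radical has a three-dimensional degree-two layer: if $A$ is local and positively graded with $\dim_{\bR}\Rad A/\Rad^2 A\ge 3$, then its radical-square-zero quotient is $\bR\langle x,y,z\rangle/(x,y,z)^2$, which is wild, and a wild quotient forces $A$ to be wild. A crucial point is that the graded dimensions supplied by Theorem~\ref{Thm: dimension formula} do not involve the parameter $\lambda$, so a proof of this shape is automatically valid for every $\lambda\in\bR$ (including $\lambda=0$), as the statement requires.

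Carrying this out, I would take $e=e(0,1,0,1)$ and compute $\dim_{\bf q} e\fqH(2\delta)e=\sum_{\lambda}K_{\bf q}(\lambda,(0,1,0,1))^2$ via Theorem~\ref{Thm: dimension formula}. First one lists the standard bitableaux with residue sequence $(0,1,0,1)$: building the bipartition box by box (the first box is forced to be the $(1,1)$-node of the first component, of residue $0$), one finds there are exactly twelve of them and that they occupy twelve \emph{distinct} shapes of $2\delta$, so each $K_{\bf q}(\lambda,(0,1,0,1))$ is a single monomial ${\bf q}^{\deg T}$ and $\dim_{\bf q} e\fqH(2\delta)e=\sum_T {\bf q}^{2\deg T}$. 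It then remains to evaluate the degrees $\deg T=\sum_i d_b(x_i)$. The outcome I expect is that exactly one tableau (the column bitableau of shape $((1),(1^3))$) has degree $0$, so that $e$ is primitive and $e\fqH(2\delta)e$ is local, and that exactly three tableaux have degree $1$, giving
$$
\dim_{\bf q} e\fqH(2\delta)e = 1+3{\bf q}^2+4{\bf q}^4+3{\bf q}^6+{\bf q}^8 .
$$

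Since this graded dimension has no odd powers of ${\bf q}$, the radical of $e\fqH(2\delta)e$ is concentrated in degrees $\ge 2$ and its square in degrees $\ge 4$, so the three degree-two elements form a basis of $\Rad/\Rad^2$; arguing exactly as in the lemma following Theorem~\ref{Thm: dimension formula}, the quiver of $e\fqH(2\delta)e$ is a single vertex with three loops. Hence $e\fqH(2\delta)e$ is wild, and therefore $\fqH(2\delta)$ is wild. The main obstacle is purely bookkeeping: one must enumerate the twelve bitableaux and compute each $d_b(x_i)$ (tracking addable and removable $i$-nodes below each added node, with nodes of the first component regarded as lying above those of the second), and in particular verify the two load-bearing facts that precisely one tableau has degree $0$ (primitivity of $e$) and precisely three have degree $1$ (the three loops). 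This is the analogue, one level of $\delta$ higher, of the computation behind Proposition~\ref{i=0}, where the same idempotent pattern produced only single loops and hence tameness.
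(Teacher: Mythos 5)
Your proposal is correct, and it reaches the conclusion by a genuinely different and in fact more economical route than the paper. The paper also passes to a corner algebra, but it takes $e=e(0101)+e(1010)$ and deduces wildness from a two-vertex quiver with (at least) two loops at each vertex and arrows in both directions; you truncate by the single idempotent $e(0101)$, show that the corner algebra is graded local, and finish with three loops at one vertex, whose radical-square-zero quotient $\bR\langle x,y,z\rangle/(x,y,z)^2$ is wild. The idempotent-truncation principle you invoke at the outset (wildness of $e\fqH(2\delta)e$ forces wildness of $\fqH(2\delta)$) is the same standard fact the paper uses silently. Your bookkeeping also checks out: there are exactly twelve standard bitableaux with residue sequence $(0,1,0,1)$, of twelve pairwise distinct shapes, with degrees $0,1,1,1,2,2,2,2,3,3,3,4$ --- the degree-$0$ one of shape $((1),(1^3))$ and the degree-$1$ ones of shapes $((1,1,1),(1))$, $((1,1),(1,1))$, $((1),(3))$ --- so that $\dim_{\bf q}e(0101)\fqH(2\delta)e(0101)=1+3{\bf q}^2+4{\bf q}^4+3{\bf q}^6+{\bf q}^8$, exactly as you predict, and the locality plus three-loop argument goes through.

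One point worth flagging: your polynomial disagrees with the one displayed in the paper's proof, namely $1+2{\bf q}^2+2{\bf q}^4+2{\bf q}^6+{\bf q}^8$ (and likewise the paper's off-diagonal ${\bf q}^2+2{\bf q}^4+{\bf q}^6$, whereas the eight shapes admitting both residue sequences give ungraded dimension $8$, not $4$). Here yours is the correct output of Theorem \ref{Thm: dimension formula}: the ungraded dimension of $e(0101)\fqH(2\delta)e(0101)$ must equal the number of equal-shape pairs of standard bitableaux with residue sequence $(0101)$, which is $12$, not $8$, so the paper's displayed dimensions undercount. The slip is harmless for the paper --- two loops at each of two vertices together with arrows both ways already force wildness, and the corrected numbers only strengthen its argument --- but your figures should not be ``fixed'' to match the printed ones. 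The discrepancy also explains what each approach buys: if the diagonal corner really had only two degree-two elements, a one-vertex truncation would be inconclusive, since a local algebra with two loops and radical square zero is tame; that is why the paper's template, inherited from Proposition \ref{i=0}, carries the second idempotent, while your correct count of three degree-two elements lets the single-vertex argument suffice.
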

\begin{proof}
We consider $A=e\fqH(2\delta)e$, for $e=e_1+e_2$, where $e_1$ and $e_2$ are given by 
$$
e_1=e(0101), \;\; e_2=e(1010).
$$
Then, the graded dimension formula shows 
\begin{align*}
\dim_{\bf q} e_1\fqH(2\delta)e_1&=\dim_{\bf q} e_2\fqH(2\delta)e_2=1+2{\bf q}^2+2{\bf q}^4+2{\bf q}^6+{\bf q}^8,\\
\dim_{\bf q} e_1\fqH(2\delta)e_2&=\dim_{\bf q} e_2\fqH(2\delta)e_1={\bf q}^2+2{\bf q}^4+{\bf q}^6.
\end{align*}
Thus, the quiver of $A$ has two loops on each vertex $1$ and $2$ and there are arrows $1\rightarrow 2$ and $1\leftarrow 2$. Thus, $A$ is wild and so is 
$\fqH(2\delta)$.
\end{proof}

\section{Representation type for $i=1$}

\begin{prop}\label{i=1}
Suppose that $\Lambda=\Lambda_0+\Lambda_s$ with $1\le s\le \ell$. 
Then $\fqH(\lambda^s_1)$, for $\ell\geq s+1$, is of finite representation type.
\end{prop}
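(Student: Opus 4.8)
The plan is to compute the number of simple modules first and then to identify the basic algebra by means of the graded dimension formula. By the categorification theorem the number of simple $\fqH(\lambda^s_1)$-modules equals $\dim V(\Lambda)_{\Lambda-\lambda^s_1}$, and this is where the hypothesis $\ell\ge s+1$ enters: it guarantees that the nodes $J=\{0,1,\dots,s\}$ span a genuine finite type $A_{s+1}$ sub-Dynkin-diagram, with no wrap-around through the affine node, and that $\lambda^s_1=\sum_{i\in J}\alpha_i$ is its highest root. Writing $v_\Lambda$ for the highest weight vector and $\mathfrak{g}_J\cong\mathfrak{sl}_{s+2}$ for the corresponding subalgebra, every vector of weight $\Lambda-\lambda^s_1$ already lies in $U(\mathfrak{n}^-_J)v_\Lambda$, because $\lambda^s_1$ is supported in $J$; this submodule is the integrable irreducible highest weight $\mathfrak{g}_J$-module, and since $\langle h_i,\Lambda\rangle=\delta_{i0}+\delta_{is}$ for $i\in J$ its highest weight is the highest root of $\mathfrak{sl}_{s+2}$, so it is the adjoint representation. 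Hence $\dim V(\Lambda)_{\Lambda-\lambda^s_1}$ equals the dimension of the zero weight space of the adjoint representation, namely $s+1$, so there are $s+1$ simple modules; in the light of Theorem C I expect the basic algebra to be the Brauer line algebra on $s+1$ edges without exceptional vertex.

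Next I would set up the idempotent bookkeeping. Since $\ell\ge s+1$ forbids residue wrap-around, the bipartitions of content $\lambda^s_1$ are exactly the hooks $((s+1-b),(1^b))$ for $0\le b\le s+1$, and a standard bitableau of such a shape is determined by the set of entries placed in its first row. I would then pick residue sequences $\nu_0,\dots,\nu_s$, each admitting a unique degree-zero standard bitableau and no two sharing one; by Theorem \ref{Thm: dimension formula} the constant term of $\dim_{\bf q}e(\nu_j)\fqH(\lambda^s_1)e(\nu_j)$ is then $1$ and that of $\dim_{\bf q}e(\nu_i)\fqH(\lambda^s_1)e(\nu_j)$ vanishes for $i\ne j$, so the $e(\nu_j)$ form a complete set of pairwise non-isomorphic primitive idempotents and $e=\sum_j e(\nu_j)$ realises the basic algebra $e\fqH(\lambda^s_1)e$.

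Finally I would compute all of the graded dimensions $\dim_{\bf q}e(\nu_i)\fqH(\lambda^s_1)e(\nu_j)$ from Theorem \ref{Thm: dimension formula} and read off the quiver of the basic algebra using the lemma following it. I expect the degree-two data to exhibit no loops and a single pair of mutually inverse arrows between consecutive vertices, with nothing between non-adjacent vertices, so that the quiver is the straight line $A_{s+1}$ with double bonds. Positivity of the grading should then bound the path spaces tightly enough to force the zigzag relations, identifying $e\fqH(\lambda^s_1)e$ with the Brauer line algebra on $s+1$ edges and no exceptional vertex; as Brauer tree algebras are of finite representation type, this proves the proposition.

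The main obstacle is the degree bookkeeping inside Theorem \ref{Thm: dimension formula}. I must compute $\deg(T)$ for the standard bitableaux of the hooks $((s+1-b),(1^b))$ precisely enough to show that the off-diagonal ${\bf q}^2$-coefficients equal $1$ for adjacent idempotents and vanish otherwise, and that the diagonal ${\bf q}^2$-coefficients vanish, thereby ruling out both extra arrows and loops, and then to use the grading to pin down the relations. This is exactly where systematic use of the grading replaces the more ad hoc arguments of the earlier papers.
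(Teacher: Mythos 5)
Your first two steps are sound: for $\ell\ge s+1$ the bipartitions of content $\lambda^s_1$ are exactly the hooks $((a),(1^{s+1-a}))$, $0\le a\le s+1$, and your weight-space computation via the adjoint representation of $\mathfrak{sl}_{s+2}$ correctly gives $s+1$ simple modules. (This count is right, and it is a genuinely useful addition: the enumeration in the paper's own proof, indexed by $0\le i\le s+1$, contains the repetition $e_s=e_{s+1}$, and also omits some nonzero idempotents such as $e(2,0,1)$ when $s=2$, whose projective is isomorphic to that of $e(0,2,1)$; your count supplies the completeness that the enumeration was meant to give.) The trouble starts where you predict the degrees. Carrying out the bookkeeping in Theorem \ref{Thm: dimension formula} for the hooks gives
\begin{equation*}
\dim_{\bf q} e(\nu_i)\fqH(\lambda^s_1)e(\nu_j)=
\begin{cases} 1+{\bf q}^2 & (i=j),\\ {\bf q} & (j=i\pm 1),\\ 0 & (\text{otherwise}),\end{cases}
\end{equation*}
so the Hom spaces between adjacent vertices are concentrated in degree \emph{one}, not two, and the diagonal ${\bf q}^2$-coefficient is $1$, not $0$. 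Consequently the lemma following Theorem \ref{Thm: dimension formula} is not applicable: its hypothesis $\dim_{\bf q}e_i\fqH(n)e_j-\delta_{ij}-c_{ij}{\bf q}^2\in{\bf q}^3\Z_{\ge0}[{\bf q}]$ excludes exactly the linear terms that occur here. Your plan to exclude loops by showing the diagonal ${\bf q}^2$-coefficients vanish fails at the first computation; the degree-two diagonal elements exist, and the real issue is to show that they lie in $\Rad^2$ (are products of two degree-one arrows) rather than being loops.

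The second, more serious, gap is the last step: positivity of the grading together with the graded dimensions above does \emph{not} force the zigzag relations. For instance, on the doubled $A_3$-quiver with arrows $a:0\to1$, $\bar a:1\to 0$, $b:1\to2$, $\bar b:2\to1$ (paths written right to left), the monomial algebra with relations $ba=\bar a\bar b=a\bar a=b\bar bb=\bar bb\bar b=0$ is positively graded by path length and has precisely the displayed graded dimensions, yet $a\bar a=0$ while $\bar bb\ne0$, so the zigzag relations fail and the algebra is not even self-injective. The indispensable extra input --- the one the paper invokes --- is that cyclotomic quiver Hecke algebras are \emph{symmetric} algebras \cite[Appendix A]{SVV14}, so that $\Soc P_i\cong\Top P_i$ for each indecomposable projective $P_i$. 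Since the degree-$\ge 2$ part of $P_i$ is the one-dimensional space spanned by the degree-two element $z_i\in e(\nu_i)\fqH(\lambda^s_1)e(\nu_i)$, any degree-one element $a\in e(\nu_{i\pm1})\fqH(\lambda^s_1)e(\nu_i)$ with $\Rad\,a=0$ would put a copy of $S_{i\pm1}$ into $\Soc P_i$; hence $\Rad\,a=\bR z_i$, which shows simultaneously that $z_i\in\Rad^2$ (no loops) and that both returning length-two paths at an interior vertex are nonzero and proportional --- these are the Brauer relations. Only then do you obtain the radical series $S_i,\;S_{i-1}\oplus S_{i+1},\;S_i$ and the identification with the Brauer line algebra on $s+1$ edges. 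With the symmetricity input added and your degree predictions corrected, your argument closes and becomes essentially the paper's proof, with your weight-space count playing the role of the paper's explicit enumeration of the nonzero $e(\nu)$.
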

\begin{proof}
As $\lambda^s_1=\alpha_0+\cdots+\alpha_s$, we may enumerate nonzero $e(\nu)$'s and they are 
$$
e_i=e(0,1,\dots,i-1,s,s-1,\dots,i+1,i), \;\;\text{for $0\le i\le s+1$}. 
$$
Then, graded dimensions are given as follows. 
$$
\dim_{\bf q} e_i\fqH(\lambda^s_1)e_j=\begin{cases} 1+{\bf q}^2 \quad &\text{(if $j=i$)}, \\
                                                             {\bf q} \quad &\text{(if $j=i\pm 1$)}, \\
                                                             0 \quad &\text{(otherwise)}. \end{cases}
$$
Hence, $\Rad \fqH(\lambda^s_1)$ is spanned by homogeneous elements of positive degree, $P_i=\fqH(\lambda^s_1)e_i$ are indecomposable and pairwise non-isomorphic. 
Further, as cyclotomic quiver Hecke algebras are symmetric algebras \cite[Appendix A]{SVV14}, we may conclude that the radical series of $P_i$ are
$$
P_0=\begin{array}{c} S_0 \\ S_1 \\ S_0 \end{array}, \quad
P_i=\begin{array}{c} S_i \\ S_{i-1}\oplus S_{i+1} \\ S_i \end{array}\;\text{($1\le i\le s$)}, \quad
P_{s+1}=\begin{array}{c} S_{s+1} \\ S_s \\ S_{s+1} \end{array}
$$
where $S_i=\Top(P_i)$, and $\fqH(\lambda^s_1)$ is a Brauer tree algebra. 
\end{proof}

\section{Representation type for $i=2$}

\begin{prop}\label{i=2}
Suppose that $\Lambda=\Lambda_0+\Lambda_s$ with $1\le s\le \ell$. 
Then $\fqH(\lambda^s_2)$, for $\ell\geq s+3$, is of wild representation type.
\end{prop}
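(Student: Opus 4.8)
The plan is to imitate the strategy of Proposition~\ref{i=0}: exhibit an idempotent $e=e_1+e_2$ with $e_1=e(\nu_1)$ and $e_2=e(\nu_2)$ for two carefully chosen residue sequences $\nu_1,\nu_2\in I^{2s+4}$ of content $\lambda^s_2=2(\alpha_0+\cdots+\alpha_s)+\alpha_{s+1}+\alpha_\ell$, and to prove that the corner algebra $A=e\fqH(\lambda^s_2)e$ has wild representation type. Since wildness of a corner algebra $eBe$ forces wildness of $B$ (the reduction already used in Propositions~\ref{i=0} and~\ref{ell=1}), this gives the claim. I would first record that the hypothesis $\ell\ge s+3$ is exactly the condition $2\le\frac{\ell-s+1}{2}$ placing $i=2$ in the interior of the range of Lemma~\ref{orbit representative}; concretely it guarantees that the block $\{0,\dots,s\}$ and the two extra nodes $s+1$ and $\ell$ occupy pairwise distinct nodes of the affine Dynkin diagram, and moreover that $s+1$ and $\ell$ are non-adjacent. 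This genericity is what will separate the present case from the finite-type case $i=1$ of Proposition~\ref{i=1}, where the content $\lambda^s_1=\alpha_0+\cdots+\alpha_s$ carried no such extra singletons.

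Next I would pin down $\nu_1,\nu_2$. The decisive requirement, as in Proposition~\ref{i=0}, is that the degree-one part of $e\fqH(\lambda^s_2)e$ should vanish, i.e.\ that the four graded dimensions satisfy the hypothesis $\dim_{\bf q}e_i\fqH(\lambda^s_2)e_j-\delta_{ij}-c_{ij}{\bf q}^2\in{\bf q}^3\Z_{\ge0}[{\bf q}]$ of the lemma following Theorem~\ref{Thm: dimension formula}; only then do the ${\bf q}^2$-coefficients read off loops and arrows directly rather than being hidden inside $\Rad^2$. I would therefore take $\nu_1$ to be the residue sequence of a standard bitableau in which the two copies of $0,\dots,s$ fill the two components in the aligned way and the singletons $s+1,\ell$ are adjoined last, and $\nu_2$ to differ from $\nu_1$ by the minimal local rearrangement of the $0,\dots,s$ part, chosen so that $e_1$ and $e_2$ sit at even distance. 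The four dimensions $\dim_{\bf q}e_i\fqH(\lambda^s_2)e_j$ are then computed from Theorem~\ref{Thm: dimension formula} by enumerating the standard bitableaux of the relevant shapes with $\residue(T)\in\{\nu_1,\nu_2\}$ and recording $\deg(T)$.

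Granting this computation, I expect $c_{11}\ge2$, supplied by the box of residue $s+1$ and the box of residue $\ell$, which (because $\ell\ge s+3$ forces them to be well separated) can be adjoined to the relevant shape in two genuinely independent ways, each producing an independent degree-two self-extension, and $c_{12},c_{21}\ge1$, coming from the local rearrangement linking $\nu_1$ and $\nu_2$. Applying the lemma following Theorem~\ref{Thm: dimension formula}, the Gabriel quiver of $A$ then carries two loops at one vertex together with arrows $1\to2$ and $1\leftarrow2$, which is exactly the configuration shown to be wild in the proof of Proposition~\ref{i=0}. Hence $A$, and therefore $\fqH(\lambda^s_2)$, is of wild representation type.

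The main obstacle is the graded-dimension computation itself: it must be carried out uniformly in $s$, one must check that the chosen $\nu_1,\nu_2$ give nonzero idempotents and that the degree-one part really vanishes so that the lemma applies, and one must verify that no cancellation drops the ${\bf q}^2$-coefficients below the thresholds $c_{11}=2$, $c_{12}=c_{21}=1$. The most delicate point is the second loop: one has to show that the two degree-two moves furnished by the well-separated residues $s+1$ and $\ell$ remain linearly independent in $e_1(\Rad A/\Rad^2 A)e_1$, and it is precisely here that the hypothesis $\ell\ge s+3$, rather than merely $\ell\ge s+2$, is used. Once this independence is established, wildness follows formally as above.
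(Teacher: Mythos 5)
Your strategy coincides with the paper's: pass to a corner algebra $A=e\fqH(\lambda^s_2)e$ with $e=e_1+e_2$, $e_i=e(\nu_i)$, compute the four graded dimensions via Theorem \ref{Thm: dimension formula}, and conclude wildness by the lemma following that theorem together with the quiver argument of Proposition \ref{i=0}. The genuine gap is that you stop exactly where the proof begins. You never actually fix $\nu_1,\nu_2$ (``the aligned way'', ``minimal local rearrangement'', ``even distance'' do not determine residue sequences), and the graded dimensions --- which you yourself flag as ``the main obstacle'' --- are only \emph{expected}, not computed. Since everything else in the argument is formal and already available (wildness of $eBe$ implies wildness of $B$; the lemma converts ${\bf q}^2$-coefficients into loops and arrows), this enumeration of standard bitableaux \emph{is} the entire content of the proof; granting it proves nothing. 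For the record, the paper takes
\begin{align*}
e_1&=e(0,1,\dots,s-2,s-1,s, s+1,\ell,0,\dots,s-2,s-1,s), \\
e_2&=e(0,1,\dots,s-2,s,s-1,s+1,\ell,0,\dots,s-2,s,s-1),
\end{align*}
so the singletons $s+1,\ell$ sit in the middle (not at the end, as you suggest) and $\nu_2$ arises from $\nu_1$ by swapping $s-1$ and $s$ in \emph{both} copies of the block $0,\dots,s$; the dimension formula then yields
\begin{align*}
\dim_{\bf q} e_i\fqH(\lambda^s_2)e_i&=1+2{\bf q}^2+{\bf q}^4 \quad (i=1,2), \\
\dim_{\bf q} e_1\fqH(\lambda^s_2)e_2&=\dim_{\bf q} e_2\fqH(\lambda^s_2)e_1={\bf q}^2,
\end{align*}
after which the argument of Proposition \ref{i=0} applies verbatim (here one even gets two loops at \emph{each} vertex, not just at one).

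Two further cautions about the unverified expectations. First, your attribution of the two loops to ``two independent ways of adjoining the boxes of residue $s+1$ and $\ell$'' is not what must be checked: by Theorem \ref{Thm: dimension formula}, $c_{11}$ is the ${\bf q}^2$-coefficient of $\sum_{\lambda} K_{\bf q}(\lambda,\nu_1)^2$, a sum over bipartitions, and without carrying out the tableau enumeration uniformly in $s$ (where $\ell\ge s+3$ is used to keep $s+1$ and $\ell$ non-adjacent, as you correctly anticipate) the bounds $c_{11}\ge 2$, $c_{12},c_{21}\ge 1$ have no support. Second, for your unspecified sequences it is not even established that $e(\nu_i)\ne 0$, nor that the degree-one components vanish --- both are hypotheses of the lemma you invoke; with the paper's explicit $\nu_1,\nu_2$ these facts fall out of the same computation. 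The plan is the right one and does succeed, but as written it is a plan, not a proof.
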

\begin{proof}
We consider $A=e\fqH(\delta)e$, for $e=e_1+e_2$, where $e_1$ and $e_2$ are given by 
\begin{align*}
e_1&=e(0,1,\dots,s-2,s-1,s, s+1,\ell,0,\dots,s-2,s-1,s), \\
e_2&=e(0,1,\dots,s-2,s,s-1,s+1,\ell,0,\dots,s-2,s,s-1).
\end{align*}
Then, the graded dimension formula shows 
\begin{align*}
\dim_{\bf q} e_1\fqH(\delta)e_1&=\dim_{\bf q} e_2\fqH(\delta)e_2=1+2{\bf q}^2+{\bf q}^4, \\
\dim_{\bf q} e_1\fqH(\delta)e_2&=\dim_{\bf q} e_2\fqH(\delta)e_1={\bf q}^2.
\end{align*}
Now, the proof goes in the same manner as that for Proposition \ref{i=0}. 
\end{proof}

\section{Proof of the main theorem}

The following Lemma \ref{wildness propagation 1} and Lemma \ref{wildness propagation 2} are proved by the argument we repeatedly used in our series of papers with Euiyong Park. 
This argument is another crucial ingredient of our strategy and it is based on two results. For the reader's convenience, we recall the results. The first one is a general result.

\begin{prop}
\label{reduction to critical rank}
Let $A$ and $B$ be finite dimensional $\bR$-algebras and suppose that
there exist a constant $C>0$ and functors
$F:A\text{\rm -mod}\rightarrow B\text{\rm -mod}$, $E:B\text{\rm -mod} \rightarrow A\text{\rm -mod}$ 
such that, for any $A$-module $M$,
\begin{itemize}
\item[(i)]
$M$ is a direct summand of $EF(M)$ as an $A$-module,
\item[(ii)]
$\dim F(M)\le C\dim M$.
\end{itemize}
Then, the wildness of $A$ implies the wildness of $B$. 
\end{prop}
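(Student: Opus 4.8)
The plan is to argue through Drozd's tame--wild dichotomy, which applies because $\bR$ is algebraically closed: it suffices to show that $B$ can be neither of finite nor of tame representation type. The input is the standard description of wildness of $A$ by a two-parameter family. Since $A$ is wild there are a two-dimensional irreducible variety $U$ and indecomposable $A$-modules $L_\theta$, one for each $\theta\in U$, all of a common $\bR$-dimension $d$ and pairwise non-isomorphic (so that $\theta\mapsto[L_\theta]$ is injective on isomorphism classes). I will push this family through $F$ and show, using (i) and (ii), that it descends to a genuine two-parameter family of indecomposable $B$-modules of a single fixed dimension, which contradicts tameness.

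The transport goes as follows. First I apply $F$ and invoke (ii): $\dim_\bR F(L_\theta)\le C\dim_\bR L_\theta=Cd$, so every $F(L_\theta)$ lives in one of the finitely many dimensions $\le Cd$. Next, for each $\theta$, condition (i) says that $L_\theta$ is a direct summand of $EF(L_\theta)$; writing $F(L_\theta)=\bigoplus_j N_\theta^{(j)}$ into indecomposables and using that the ($\bR$-linear) functor $E$ is additive, $L_\theta$ is a summand of $\bigoplus_j E(N_\theta^{(j)})$, hence---being indecomposable---of some $E(N_\theta^{(j)})$; I set $N_\theta:=N_\theta^{(j)}$, an indecomposable summand of $F(L_\theta)$ with $L_\theta\mid E(N_\theta)$ and $\dim_\bR N_\theta\le Cd$. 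The crucial finiteness is now immediate: if $N_\theta\cong N_{\theta'}$, then $L_\theta$ and $L_{\theta'}$ are both indecomposable summands of the single module $E(N_\theta)$, which by Krull--Schmidt has only finitely many indecomposable summands up to isomorphism; as the $L_\theta$ are pairwise non-isomorphic, only finitely many parameters can share a given image. Thus $\theta\mapsto[N_\theta]$ is finite-to-one onto isomorphism classes of indecomposable $B$-modules of dimension $\le Cd$.

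To finish I will stratify. Since there are only finitely many admissible dimensions $\le Cd$ and $U$ is two-dimensional, some fixed $d_0$ is attained on a two-dimensional locally closed piece $U_0\subseteq U$; on $U_0$ the assignment $\theta\mapsto[N_\theta]$ is a finite-to-one map from a two-dimensional variety into the isomorphism classes of indecomposable $B$-modules of dimension $d_0$. Were $B$ tame, these classes would, up to finitely many exceptions, be covered by finitely many one-parameter families and so would constitute at most a one-dimensional moduli; a finite-to-one image of the two-dimensional $U_0$ cannot fit into a one-dimensional target, a contradiction by dimension count. Hence $B$ is neither finite nor tame, and is therefore wild.

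The main obstacle is precisely this final dimension count: it requires $\theta\mapsto N_\theta$ to vary algebraically in $\theta$, whereas the choice of an indecomposable summand of $F(L_\theta)$ is a priori only pointwise, and $\bR$ may be countable (e.g. $\overline{\mathbb{F}_p}$) so no naive cardinality argument is available. I expect to resolve this by refining the stratification: the decomposition type of $F(L_\theta)$ is a constructible function of $\theta$ once $F$ is exact, so on a suitable two-dimensional stratum the summand $N_\theta$ can be chosen to depend algebraically on $\theta$, making $\theta\mapsto[N_\theta]$ a morphism of varieties to which the fibre-dimension estimate legitimately applies. For the concrete functors used later in the paper---idempotent truncation $e(-)$ together with its adjoint, and $\Ind$/$\Res$---this is automatic, since they are exact and commute with base change, so the transported family is literally algebraic and no extra work is needed.
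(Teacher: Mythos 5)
Your overall route is the standard one: the paper itself gives no proof of Proposition~\ref{reduction to critical rank} (it is recalled from the Ariki--Park series, where it is established by exactly this kind of Drozd-dichotomy-plus-counting argument), and your first two steps are sound --- extracting a two-parameter family $\{L_\theta\}$ of pairwise non-isomorphic indecomposables of fixed dimension $d$ from wildness of $A$ (legitimately algebraic, since the representation embedding defining wildness is a bimodule functor), choosing an indecomposable summand $N_\theta$ of $F(L_\theta)$ with $L_\theta \mid E(N_\theta)$ (this tacitly uses additivity of $E$, which is harmless), and the Krull--Schmidt argument that $\theta \mapsto [N_\theta]$ is finite-to-one.

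The genuine gap is everything from ``To finish I will stratify'' onward, and you have diagnosed it yourself but not closed it. Under hypotheses (i)--(ii) alone there is no reason the loci $\{\theta : \dim N_\theta = d_0\}$ are constructible, no reason any of them is two-dimensional, and no reason $\theta \mapsto [N_\theta]$ is a morphism of varieties on any stratum; and without such algebraicity the concluding dimension count is not merely delicate but false as a statement about maps of sets: over any algebraically closed field there exist injective set maps $\bR^2 \to \bR$, so cardinality and ``finite-to-one'' cannot distinguish a two-parameter family from a one-parameter one. Your proposed repair does not prove the proposition as stated, for two reasons. First, it imports a hypothesis the statement does not contain: exactness of $F$ (and implicitly of $E$). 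Second, even granting exactness, the assertions that ``the decomposition type of $F(L_\theta)$ is a constructible function of $\theta$'' and that a summand can be chosen to vary algebraically on a stratum are precisely the nontrivial content; they do not follow formally from exactness but require writing $F$ as tensoring with a bimodule (Eilenberg--Watts, using right exactness and $\bR$-linearity) and then invoking constructibility results for isomorphism classes and direct summands in algebraic families of modules. So what you have is a correct proof \emph{modulo} the assumption that $F$ and $E$ carry algebraic families to algebraic families --- which is how the result is actually used in this paper, where $F$ and $E$ are the exact induction and restriction bimodule functors $F_i$, $E_i$, but it is not a proof of the proposition in the stated generality.
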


The second one states that the 
induction functors $F_i$ and the restriction functors $E_i$ between module categories of cyclotomic quiver Hecke algebras 
may serve as the functors $F$ and $E$ in Proposition \ref{reduction to critical rank}. We state their result \cite{KK11} and \cite{Kash11} 
in ungraded form. Note that if $l_i>0$ then the conditions (i) and (ii) are satisfied.

\begin{thm}
Set $l_i = \langle h_i, \Lambda - \beta  \rangle$, for $i\in I$. Then one of the following isomorphisms of endofuctors on
the category of finitely generated $R^{\Lambda}(\beta)$-modules holds.
\begin{enumerate}
\item If $l_i \ge 0$, then $F_i E_i \oplus |l_i|\mathrm{id} \buildrel \sim \over \longrightarrow E_iF_i$.
\item If $l_i \le 0$, then $F_i E_i  \buildrel \sim \over \longrightarrow  E_iF_i \oplus |l_i|\mathrm{id}$.
\end{enumerate}
\end{thm}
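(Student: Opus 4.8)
The plan is to realize $E_iF_i$ and $F_iE_i$ as tensoring with explicit $(\fqH(\beta),\fqH(\beta))$-bimodules and to deduce the two asserted isomorphisms of functors from a split short exact sequence of those bimodules. Write $n=|\beta|$, set $\beta'=\beta-\alpha_i$, and let $e(\beta,i)=\sum_{\nu\in I^\beta}e(\nu i)\in\fqH(\beta+\alpha_i)$ be the idempotent picking out sequences whose last entry is $i$. The nonunital embedding $\fqH(\beta)\hookrightarrow e(\beta,i)\,\fqH(\beta+\alpha_i)\,e(\beta,i)$ that adds an $i$-colored strand makes $\fqH(\beta+\alpha_i)e(\beta,i)$ an $(\fqH(\beta+\alpha_i),\fqH(\beta))$-bimodule, and with the usual conventions $F_i$ is tensoring with $\fqH(\beta+\alpha_i)e(\beta,i)$ and $E_i$ with $e(\beta,i)\fqH(\beta+\alpha_i)$. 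Consequently $E_iF_i\cong e(\beta,i)\fqH(\beta+\alpha_i)e(\beta,i)\otimes_{\fqH(\beta)}(-)$ and $F_iE_i\cong \fqH(\beta)e(\beta',i)\otimes_{\fqH(\beta')}e(\beta',i)\fqH(\beta)\otimes_{\fqH(\beta)}(-)$, while $\mathrm{id}$ is tensoring with the regular bimodule $\fqH(\beta)$. Thus it suffices to produce, for $l_i\ge0$, a split short exact sequence of $(\fqH(\beta),\fqH(\beta))$-bimodules $0\to B_1\to B_2\to \fqH(\beta)^{\oplus l_i}\to0$, where $B_1=\fqH(\beta)e(\beta',i)\otimes_{\fqH(\beta')}e(\beta',i)\fqH(\beta)$ and $B_2=e(\beta,i)\fqH(\beta+\alpha_i)e(\beta,i)$, together with the mirror sequence $0\to B_2\to B_1\to \fqH(\beta)^{\oplus|l_i|}\to0$ when $l_i\le0$. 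This matches the decategorified relation $e_if_i-f_ie_i=l_i$ on the weight space of weight $\Lambda-\beta$, with $E_iF_i$ the larger functor precisely when $l_i\ge0$.

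Next I would write down the three maps explicitly. The inclusion $B_1\hookrightarrow B_2$ is the natural transformation induced by the braiding generator $\psi_n$: on representatives it is $a\otimes b\mapsto a\,\psi_n\,b$, after identifying $a\in\fqH(\beta)e(\beta',i)$ and $b\in e(\beta',i)\fqH(\beta)$ with their images in $\fqH(\beta+\alpha_i)$ under the strand-adding embedding. The complementary free part is detected by the last-strand variable $x_{n+1}$: the section $\fqH(\beta)^{\oplus l_i}\to B_2$ sends the $k$-th standard generator to $e(\beta,i)\,x_{n+1}^{k}$ for $0\le k\le l_i-1$, and the surjection $B_2\to\fqH(\beta)^{\oplus l_i}$ reads off the coefficients of $1,x_{n+1},\dots,x_{n+1}^{l_i-1}$ modulo the $\psi_n$-part. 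For $l_i\le0$ the direction reverses and the free summand is produced on the other tensor factor by the corresponding power-of-$x$ section.

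The computation that makes this work is the KLR basis theorem together with the cyclotomic relation. At the affine level one has a Mackey-type decomposition $e(\beta,i)R(\beta+\alpha_i)e(\beta,i)\cong \big(R(\beta)e(\beta',i)\otimes_{R(\beta')}e(\beta',i)R(\beta)\big)\oplus \bigoplus_{k\ge0}R(\beta)\,x_{n+1}^{k}$, so that $E_iF_i$ and $F_iE_i$ already differ by a free part, but one of infinite rank in $x_{n+1}$. Passing to the cyclotomic quotient $\fqH$, the cyclotomic relation carried to the last strand forces $x_{n+1}$ to be nilpotent with the precise bound governed by $l_i=\langle h_i,\Lambda-\beta\rangle$: the powers $1,x_{n+1},\dots,x_{n+1}^{l_i-1}$ survive and remain independent modulo the image of $\psi_n$, while $x_{n+1}^{l_i}$ falls into that image. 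This cuts the infinite free part down to rank exactly $l_i$ and yields the short exact sequence above, the splitting then being furnished by the explicit section $x_{n+1}^{k}$ once exactness is known.

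I expect the main obstacle to be exactly this last point: proving that the cokernel of $B_1\to B_2$ is free over $\fqH(\beta)$ of rank precisely $|l_i|$, i.e.\ the sharp cyclotomic nilpotency of the last-strand variable. This rests on the nontrivial structural input for cyclotomic quiver Hecke algebras, namely the graded basis/dimension theorem (Theorem \ref{Thm: dimension formula}) and the projectivity of $\fqH(\beta+\alpha_i)e(\beta,i)$ as a one-sided $\fqH(\beta)$-module, equivalently the exactness and biadjointness of $E_i$ and $F_i$. Injectivity of $B_1\to B_2$ and the splitting are comparatively formal once exactness and this projectivity are in hand, since all modules in sight are then finitely generated projective on the relevant side, and the composite $\fqH(\beta)^{\oplus l_i}\to B_2\to\fqH(\beta)^{\oplus l_i}$ can be checked to be an isomorphism degree by degree.
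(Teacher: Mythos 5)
The first thing to say is that the paper does not prove this statement at all: it is quoted, in ungraded form, from Kang--Kashiwara \cite{KK11} and Kashiwara \cite{Kash11}, and is used as a black box so that the functors $F_i$, $E_i$ satisfy the hypotheses of Proposition \ref{reduction to critical rank}. So your write-up is not an alternative to anything in the paper; it is an outline of the proof in \cite{KK11} itself, and as an outline it is faithful: the realization of $E_iF_i$ and $F_iE_i$ by the bimodules $B_2=e(\beta,i)\fqH(\beta+\alpha_i)e(\beta,i)$ and $B_1=\fqH(\beta)e(\beta',i)\otimes_{\fqH(\beta')}e(\beta',i)\fqH(\beta)$, the comparison map $a\otimes b\mapsto a\psi_n b$, the affine Mackey decomposition with polynomial tail $\bigoplus_{k\ge0}R(\beta)x_{n+1}^k$, and the idea that the cyclotomic quotient truncates this tail are exactly the skeleton of the argument in \cite{KK11}.

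The genuine gap is that the step you defer at the end is the entire mathematical content of the theorem. Proving that in the cyclotomic quotient the $\psi_n$-map $B_1\to B_2$ is injective with cokernel free of rank exactly $l_i$ when $l_i\ge0$, respectively surjective with kernel free of rank $-l_i$ when $l_i\le0$, and that the resulting sequences split as bimodules, is where essentially all of the work of \cite{KK11} lies, and it cannot be assembled from the inputs you propose to cite. The projectivity of $\fqH(\beta+\alpha_i)e(\beta,i)$ as a one-sided $\fqH(\beta)$-module (equivalently, exactness of $F_i$) is itself a main theorem of \cite{KK11}, proved by an induction intertwined with the very exact sequences you need; biadjointness of $E_i$ and $F_i$ is the main theorem of \cite{Kash11} and is established using these structure results, so invoking it here is circular; and the graded dimension formula (Theorem \ref{Thm: dimension formula}) computes $\dim_{\bf q}e(\nu')\fqH(n)e(\nu)$ but says nothing about the independence of $1,x_{n+1},\dots,x_{n+1}^{l_i-1}$ modulo the image of $\psi_n$, which is the sharp point. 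Two secondary inaccuracies: the strand-adding map $\fqH(\beta)\to e(\beta,i)\fqH(\beta+\alpha_i)e(\beta,i)$ is an algebra homomorphism but not in general injective, so it should not be called an embedding (take $\beta=\alpha_i$ with $\langle h_i,\Lambda\rangle=1$; then the source is $\bR$ while $\fqH(2\alpha_i)=0$); and for $l_i\le0$ the natural map still runs $B_1\to B_2$, becoming surjective with free kernel, whereas your ``mirror sequence'' posits an injection $B_2\to B_1$ that you never construct and for which no explicit element analogous to $\psi_n$ or $x_{n+1}^k$ is available.
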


Now we are ready to prove Lemma \ref{wildness propagation 1} and Lemma \ref{wildness propagation 2}.

\begin{lem}\label{wildness propagation 1}
Suppose $1\le i\le \frac{\ell-s+1}{2}$ and $k\in\Z_{\ge0}$. If $\fqH(\lambda^s_i+k\delta)$ is of wild representation type, so is 
$\fqH(\lambda^s_{i-1}+(k+1)\delta)$.
\end{lem}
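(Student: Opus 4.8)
The plan is to apply Proposition \ref{reduction to critical rank} with $A=\fqH(\lambda^s_i+k\delta)$ and $B=\fqH(\lambda^s_{i-1}+(k+1)\delta)$, taking as $F$ and $E$ suitable composites of the induction and restriction functors $F_j,E_j$. First I would record the difference of the two weights. A direct computation, using $\delta=\alpha_0+\cdots+\alpha_\ell$ and the fact that $\lambda^s_i-\lambda^s_{i-1}$ is supported exactly on $\{0,\dots,s+i-1\}\cup\{\ell-i+2,\dots,\ell\}$ with every coefficient there equal to $1$, gives
$$
(\lambda^s_{i-1}+(k+1)\delta)-(\lambda^s_i+k\delta)=\delta-(\lambda^s_i-\lambda^s_{i-1})=\alpha_{s+i}+\alpha_{s+i+1}+\cdots+\alpha_{\ell-i+1}.
$$
The hypothesis $i\le\frac{\ell-s+1}{2}$ guarantees that this interval of ``gap'' indices is nonempty. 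Thus $F$ should be a composite of the $F_j$ over these indices, and $E$ the composite of the $E_j$ in the reverse order, so that $F\colon A\text{-mod}\to B\text{-mod}$ and $E\colon B\text{-mod}\to A\text{-mod}$.

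The crux, and the main obstacle, is the order in which the gap roots are added. By the theorem recalled above, the pair $(F_j,E_j)$ satisfies conditions (i) and (ii) of Proposition \ref{reduction to critical rank} on $\fqH(\gamma)\text{-mod}$ as soon as $l_j=\langle h_j,\Lambda-\gamma\rangle>0$. Writing $c_m$ for the coefficient of $\alpha_m$ in the current weight and using $\langle h_j,\beta\rangle=2c_j-c_{j-1}-c_{j+1}$ for type $A^{(1)}_\ell$ together with $\langle h_j,\Lambda\rangle=0$ for the gap indices (each satisfies $s<j\le\ell$), one finds that at the starting weight $\lambda^s_i+k\delta$ the value $l_j$ equals $1$ at the two ends $j=s+i$ and $j=\ell-i+1$ but vanishes at every interior index, since there $c_{j-1}=c_j=c_{j+1}=0$. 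Hence the roots cannot all be added at once. The remedy is to add them one at a time from left to right: once $\alpha_{s+i},\dots,\alpha_{m-1}$ have been added their coefficients have become $1$, so for the next index $m$ one has $c_{m-1}=1$ and $c_m=c_{m+1}=0$, whence $l_m=1>0$; at the final index $m=\ell-i+1$ the right neighbour $c_{\ell-i+2}=1$ already lies outside the gap, giving $l_m=2>0$. With this ordering every functor is applied under a strictly positive $l_j$.

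It remains to assemble the composites $F=F_{\ell-i+1}\cdots F_{s+i+1}F_{s+i}$ and $E=E_{s+i}E_{s+i+1}\cdots E_{\ell-i+1}$ and to verify the two hypotheses. Condition (ii) is immediate, since each $F_j$ multiplies dimension by at most a constant, so $\dim F(M)\le C\dim M$ with $C$ the product of these constants. For condition (i) I would argue by induction on the number of gap roots, peeling off the innermost adjacent pair $E_{\ell-i+1}F_{\ell-i+1}$: writing $F=F_{\ell-i+1}G$ and $E=G'E_{\ell-i+1}$, the positivity $l_{\ell-i+1}>0$ at the weight reached by $G$ shows $G(M)$ is a direct summand of $E_{\ell-i+1}F_{\ell-i+1}G(M)$, and applying the exact (hence summand-preserving) functor $G'$ shows $G'G(M)$ is a summand of $EF(M)$; the inductive hypothesis applied to the shorter composite $G'G$ then gives that $M$ is a summand of $G'G(M)$, so $M$ is a summand of $EF(M)$. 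With (i) and (ii) established, Proposition \ref{reduction to critical rank} propagates wildness from $A=\fqH(\lambda^s_i+k\delta)$ to $B=\fqH(\lambda^s_{i-1}+(k+1)\delta)$, as required.
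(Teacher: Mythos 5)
Your proof is correct and takes essentially the same route as the paper: the paper also identifies the difference of weights as the interval $\alpha_{s+i},\dots,\alpha_{\ell-i+1}$, adds these roots one at a time from left to right, verifies the positivity $\langle h_{s+i+k},\, \Lambda-\lambda^s_i-\alpha_{s+i}-\cdots-\alpha_{s+i+k-1}\rangle\ge 1$ at every step, and combines this with Proposition \ref{reduction to critical rank} and the Kang--Kashiwara/Kashiwara isomorphism. The only cosmetic difference is that the paper invokes the proposition once per added simple root, whereas you assemble the composite functors $F$ and $E$ and apply it once, your peeling induction being exactly the step-by-step argument in disguise.
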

\begin{proof}
We add simple roots $\alpha_{s+i},\dots, \alpha_{\ell-i+1}$ to $\lambda^s_i$ in this order. We may check
$$
\langle h_{s+i+k}, \Lambda_0+\Lambda_s-\lambda^s_1-\alpha_{s+i}-\cdots-\alpha_{s+i+k-1} \rangle\ge 1, 
$$
for $0\le k\le \ell-s-2i+1$. 
To carry out this computation, it is helpful to arrange the simple roots in $\lambda^s_i$ on the horizontally long rectangle diagram ($i$ rows and $s+i$ columns) as follows.
\[
\begin{array}{ccccccc}
\alpha_0 & \alpha_1 & \cdots & \alpha_s & \cdots & \cdots & \alpha_{s+i-1} \\
\alpha_\ell & \alpha_0 & \alpha_1 & \ddots & \ddots & \ddots & \vdots \\
\vdots & \ddots & \ddots & \ddots & \ddots & \ddots & \vdots \\
\alpha_{\ell-i+2} & \cdots & \alpha_\ell & \alpha_0 & \alpha_1 & \cdots & \alpha_s \\
\end{array}
\]
Then we add the simple roots to the first row of the diagram in the order $\alpha_{s+i},\dots, \alpha_{\ell-i+1}$.
Once this inequality is verified, we may apply Proposition \ref{reduction to critical rank} in each step of adding simple roots, and the result follows.
\end{proof}

\begin{lem}\label{wildness propagation 2}
Suppose $0\le i\le \frac{\ell-s-1}{2}$ and $k\in\Z_{\ge0}$. If $\fqH(\lambda^s_i+k\delta)$ is of wild representation type, so is 
$\fqH(\lambda^s_{i+1}+k\delta)$.
\end{lem}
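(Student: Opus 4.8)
The plan is to mimic the proof of Lemma \ref{wildness propagation 1}: realize $\lambda^s_{i+1}-\lambda^s_i$ as an ordered sum of simple roots and add them one box at a time, applying Proposition \ref{reduction to critical rank} with $F=F_j$ and $E=E_j$ (the induction and restriction functors) at each step. By the theorem of Kang and Kashiwara recalled above \cite{KK11}, \cite{Kash11}, the hypotheses (i) and (ii) of Proposition \ref{reduction to critical rank} are satisfied at a given step as soon as the integer $l_j=\langle h_j,\Lambda-\beta\rangle$ attached to the current $\beta$ is strictly positive. Indeed, the isomorphism $F_jE_j\oplus l_j\,\mathrm{id}\buildrel\sim\over\longrightarrow E_jF_j$ on $R^\Lambda(\beta)\text{-mod}$ shows that any $M$ is a direct summand of $E_jF_j(M)$ once $l_j\ge1$. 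So the whole argument reduces to choosing an order in which to add the boxes and checking $l_j\ge1$ throughout.

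To find the roots to add, I would use the rectangle diagram. Recall that $\lambda^s_i$ is the content of the $i\times(s+i)$ rectangle, the box in row $r$ and column $c$ carrying residue $\alpha_{c-r}$ (indices mod $\ell+1$), while $\lambda^s_{i+1}$ is the content of the $(i+1)\times(s+i+1)$ rectangle. Hence $\lambda^s_{i+1}-\lambda^s_i$ is the content of the new rightmost column, namely $\alpha_{s+i},\alpha_{s+i-1},\dots,\alpha_{s+1}$ read from top to bottom, together with the new bottom row, namely $\alpha_{\ell+1-i},\dots,\alpha_\ell,\alpha_0,\dots,\alpha_s$ read from left to right, which is $s+2i+1$ boxes in total. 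I would add them in exactly this order, first the column from the top and then the row from the left, so that every intermediate shape is again a genuine partition, with empty second component exactly as for $\lambda^s_i$ and $\lambda^s_{i+1}$ themselves.

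For such a shape, $l_j=\langle h_j,\Lambda-\beta\rangle$ equals the number of addable $j$-nodes minus the number of removable $j$-nodes of the corresponding bipartition, and it is independent of $k$ since $\langle h_j,\delta\rangle=0$. For each box added, the node being filled contributes one addable $j$-node, and a short case check shows there is no removable node of the same residue, so $l_j=1$; the only place where the empty second component intervenes is the very last box, of residue $s$, where its addable $(1,1)$-node of residue $s$ gives $l_s=2$. The hypothesis $0\le i\le\frac{\ell-s-1}{2}$, equivalently $s+2i+1\le\ell$, is exactly what guarantees that the column residues $\{s+1,\dots,s+i\}$ and the row residues $\{\ell+1-i,\dots,\ell,0,\dots,s\}$ are pairwise distinct and never wrap around, which is what keeps these counts clean. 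Once $l_j\ge1$ is verified at every step, Proposition \ref{reduction to critical rank} propagates wildness box by box, and chaining the $s+2i+1$ steps yields the claim.

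The main obstacle is the uniform verification that $l_j\ge1$ along the chosen order. The delicate point is not the generic box, where one reads off $l_j=1$ at once, but rather ruling out the appearance of a removable node carrying the same residue as the box being added and correctly accounting for the addable $s$-node in the empty second component; this is precisely where the bound $i\le\frac{\ell-s-1}{2}$ must be used, since a poor choice of the order in which the boxes are added could produce a step with $l_j\le0$, at which the functorial argument breaks down.
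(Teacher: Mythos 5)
Your proposal is correct and takes essentially the same approach as the paper: you add the simple roots of $\lambda^s_{i+1}-\lambda^s_i$ in exactly the order the paper prescribes (the new column $\alpha_{s+i},\dots,\alpha_{s+1}$ from the top, then the new bottom row $\alpha_{\ell-i+1},\dots,\alpha_\ell,\alpha_0,\dots,\alpha_{s-1},\alpha_s$ from the left), propagating wildness one root at a time via Proposition \ref{reduction to critical rank} and the Kang--Kashiwara isomorphism. Your addable-minus-removable node computation showing $l_j\ge 1$ at every step (with $l_s=2$ at the final step) correctly supplies the verification that the paper compresses into ``the rest of the proof goes in the similar manner as Lemma \ref{wildness propagation 1}.''
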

\begin{proof}
We add simple roots
$$
\alpha_{s+i},\alpha_{s+i-1},\dots,\alpha_{s+1};\alpha_{\ell-i+1},\alpha_{\ell-i+2},\dots,\alpha_\ell,\alpha_0, \alpha_1,\dots,\alpha_{s-1}; \alpha_s
$$
to $\lambda^s_i$ in this order. Then, the rest of the proof goes in the similar manner as Lemma \ref{wildness propagation 1}.
\end{proof}

\begin{thm}\label{main thm}
Let $\Lambda=\Lambda_0+\Lambda_s$, for $1\le s\le\ell$, and $\beta=\lambda^s_i+k\delta$, for $0\le i\le \frac{\ell-s+1}{2}$ and $k\in\Z_{\ge0}$. Then 
$\fqH(\beta)$ is
\begin{itemize}
\item[(i)]
simple if $i=0$ and $k=0$.
\item[(ii)]
of finite representation type if $i=1$ and $k=0$. 
\item[(iii)]
of tame representation type if $i=0$, $k=1$ and $\ell=1$.
\item[(iv)]
of wild representation type otherwise.
\end{itemize}
\end{thm}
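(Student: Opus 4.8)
The plan is to deduce the theorem by assembling the four base computations (Propositions~\ref{i=0}, \ref{ell=1}, \ref{i=1}, \ref{i=2}) with the two propagation lemmas (Lemmas~\ref{wildness propagation 1} and~\ref{wildness propagation 2}); by Corollary~\ref{mu and lambda} we may assume $\beta=\lambda^s_i+k\delta$ throughout. Cases (i)--(iii) are then immediate. For (i), $\beta=\lambda^s_0=0$ and $\fqH(0)=\bR$ is simple. Case (ii) is Proposition~\ref{i=1}, whose hypothesis $\ell\ge s+1$ is exactly the condition that $i=1$ be admissible, i.e.\ $1\le\frac{\ell-s+1}{2}$. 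Case (iii) is the tame assertion of Proposition~\ref{i=0}. Thus the entire content lies in case (iv), which I would prove by propagating wildness from a small number of base points.

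Write $N=\frac{\ell-s+1}{2}$, so the admissible indices are $0\le i\le N$, $k\ge0$. The key observation is that applying Lemma~\ref{wildness propagation 2} followed by Lemma~\ref{wildness propagation 1} yields the implication
\[
\fqH(\lambda^s_i+k\delta)\ \text{wild}\ \Longrightarrow\ \fqH(\lambda^s_i+(k+1)\delta)\ \text{wild}\qquad(0\le i\le N-1),
\]
the range $i\le N-1$ being the common domain of the two lemmas. Combined with Lemma~\ref{wildness propagation 2}, which raises $i$ at fixed $k$, this fills the whole wild region whenever $\ell\ge s+1$ (so $N\ge1$): starting from $\fqH(\delta)$, wild by Proposition~\ref{i=0} since then $\ell\ge2$, the move above gives wildness of every $\fqH(k\delta)$ with $k\ge1$, and Lemma~\ref{wildness propagation 2} then fans these out to all admissible $\fqH(\lambda^s_i+k\delta)$ with $i\ge1$, $k\ge1$. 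The remaining admissible points have $k=0$: here $(i,k)=(0,0)$ and $(1,0)$ are the excluded simple and finite cases, while the points with $i\ge2$ occur only for $N\ge2$, i.e.\ $\ell\ge s+3$, and are reached from the wild base point $\fqH(\lambda^s_2)$ of Proposition~\ref{i=2} by Lemma~\ref{wildness propagation 2}. This settles (iv) for $\ell\ge s+1$.

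The subtle case is the boundary $\ell=s$ (which contains $\ell=s=1$), where $N=\tfrac12$ forces $i=0$ and both propagation lemmas have empty range, so they give nothing. Here I would instead climb the tower $\fqH(k\delta)\to\fqH((k+1)\delta)$ directly via Proposition~\ref{reduction to critical rank}, adding a complete null root $\delta=\alpha_0+\cdots+\alpha_\ell$ one simple root at a time. The hypotheses (i)--(ii) of that proposition hold for the induction and restriction functors whenever $l_i=\langle h_i,\Lambda-\beta\rangle>0$, and since $\langle h_i,\delta\rangle=0$ these pairings are independent of $k$; adding the roots in the order $\alpha_0,\alpha_1,\dots,\alpha_\ell$, one checks from the affine Cartan matrix that each successive $l_i$ is positive (in fact equal to $1$ until the last root). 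For $\ell=s\ge2$ the base point is $\fqH(\delta)$, wild by Proposition~\ref{i=0}, giving all $k\ge1$; for $\ell=1$ the base point is $\fqH(2\delta)$, wild by Proposition~\ref{ell=1}, giving all $k\ge2$, with $\fqH(\delta)$ the tame exception already recorded in (iii).

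I expect the main obstacle to be the coverage bookkeeping at the boundary: one must verify that every admissible $(i,k)$ outside $\{(0,0),(1,0),(0,1)_{\ell=1}\}$ is reached by a chain of the two moves from a base point, and in particular supply the direct $\delta$-adding argument precisely in the degenerate range $\ell=s$ where the packaged lemmas collapse. Once the positivity of the $l_i$ along the chosen order of addition is confirmed, the remaining inductions are routine.
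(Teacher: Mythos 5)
Your proposal is correct, and away from the boundary it is the paper's own argument: (i)--(iii) are read off from Propositions \ref{i=0} and \ref{i=1}, and (iv) is obtained by propagating wildness from the base points of Propositions \ref{i=0}, \ref{ell=1} and \ref{i=2} via Lemmas \ref{wildness propagation 1} and \ref{wildness propagation 2}; your composite move $(i,k)\to(i+1,k)\to(i,k+1)$ on $0\le i\le N-1$, followed by fanning out with Lemma \ref{wildness propagation 2}, is exactly the intended bookkeeping. Where you genuinely differ is the boundary $\ell=s$, and there your version is more complete than the paper's. The paper's proof of (iv) is a single sentence citing only those five statements; but when $\ell=s$ the only admissible index is $i=0$ and both propagation lemmas have empty hypothesis ranges ($1\le i\le\tfrac12$, resp.\ $0\le i\le-\tfrac12$), so the cited results give only $\fqH(\delta)$ wild for $\ell=s\ge2$ (Proposition \ref{i=0}) and $\fqH(2\delta)$ wild for $\ell=s=1$ (Proposition \ref{ell=1}), leaving $\fqH(k\delta)$ for $k\ge2$ (resp.\ $k\ge3$) unaddressed. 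Your repair---climbing $\fqH(k\delta)\to\fqH((k+1)\delta)$ by adding $\alpha_0,\alpha_1,\dots,\alpha_\ell$ in this order and checking $l_j>0$ at every step (indeed $l_j=1$ for $j<\ell$ and $l_\ell=3$ when $\ell=s$), so that Proposition \ref{reduction to critical rank} applies each time---is precisely the mechanism used inside the paper's proofs of the two lemmas, so it is a correct and minimal patch. Note also that for $\ell=s\ge2$ one could instead invoke Proposition \ref{Dynkin autom}, which gives $R^{\Lambda_0+\Lambda_\ell}(k\delta)\simeq R^{\Lambda_0+\Lambda_1}(k\delta)$ and reduces to $s=1$, where the lemmas do apply; but for $\ell=s=1$ the rotation fixes $\Lambda_0+\Lambda_1$, so a direct argument like yours is genuinely needed there.
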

\begin{proof}
(i) is clear since $\fqH(0)=\bR$. (ii) is Propsosition \ref{i=1}. (iii) is proved in Proposition \ref{i=0}. Using Lemma \ref{wildness propagation 1} and Lemma \ref{wildness propagation 2}, 
(iv) follows from Proposition \ref{i=0}, Proposition \ref{ell=1} and Proposition \ref{i=2}.
\end{proof}

\noindent
By Lemma \ref{orbit representative} and Corollary \ref{mu and lambda}, Theorem \ref{main thm} and the similar result for $s=0$ by Kakei tell representation type of $R^{\Lambda_0+\Lambda_s}(\beta)$, 
for all $\beta\in\rlQ^+$ and for all $0\le s\le\ell$. Moreover, we have obtained explicit representatives for the derived equivalence classes 
of finite and tame $\fqH(\beta)$'s in the course of the proof.

\section{Proof of Theorem C}

As an application of the main result, we can prove Theorem C. 
Let ${\mathcal H}$ be a Hecke algebra of classical type over an algebraically closed field $\bR$ of odd characteristic 
and $B$ a block algebra of ${\mathcal H}$. We suppose that 
$B$ is of finite representation type. It is well-known that Hecke algebras of type $A$ and type $B$ 
are cellular algebras. We also know that Hecke algebras of type $D$ are cellular by \cite[Theorem 1.1]{G}. Hence 
$B$ is a symmetric cellular algebra of finite type. Let $e$ be the quantum characteristic of $q$. As $q\ne 1$, $e$ is the multiplicative order of $q$.

\subsection{}
If ${\mathcal H}$ is of type $A$, then 
$B$ is a Brauer tree algebra whose Brauer tree is a straight line without exceptional vertex. 
This follows from the radical series of indecomposable $B$-modules given in Appendix 1.

\subsection{}
Let $T_0,T_1,\dots,T_{n-1}$ be the standard generators of ${\mathcal H}$. The quadratic relations are 
$$
(T_0-Q)(T_0+1)=0, \quad (T_i-q)(T_i+1)=0\;\; (1\le i< n).
$$
If $-Q\not\in q^{\Z}$, then $B$ is the tensor product of block algebras $B_1$ and $B_2$ of type $A$, such that 
one is of finite type and the other is simple. Hence $B$ is Morita equivalent to $B_1$ or $B_2$, so that $B$ is 
a Brauer tree algebra whose Brauer tree is a straight line without exceptional vertex. 
Suppose that $-Q=q^s$. 
If $Q\ne -1$, we apply Theorem A. If $e$ is even and $Q=-1=q^{e/2}$,  
we apply Theorem B. 
For the algebras of finite representation type that appear in Theorem A, see the proof of Propsosition \ref{i=1}. 
The algebra of finite representation type that appears in Theorem B is $\bR[x]/(x^2)$.  
In the both cases, $B$ is derived equivalent to a Brauer tree algebra whose Brauer tree is a straight line without exceptional vertex. 
Therefore, applying Rickard's star theorem \cite[Theorem 4.2]{R89},  $B$ is derived equivalent to 
the star-shape Brauer tree algebra whose central vertex is not exceptional. (By the statement of the star theorem, 
the only vertex which could be exceptional is the central vertex.)

On the other hand, Ohmatsu's theorem, which is proved in \cite[Proposition 3.14]{AKMW}, tells us that 
an indecomposable symmetric cellular algebra of finite type is a Brauer tree algebra whose Brauer tree is a straight line 
with at most one exceptional vertex. Thus, $B$ is a Brauer tree algebra whose Brauer tree is a straight line with at most 
one exceptional vertex. Further, it is derived equivalent to the star-shape Brauer tree algebra whose central vertex 
might have a multiplicity $m>1$, by \cite[Theorem 4.2]{R89}. 

Now, the Rickard star theorem also states that 
the derived equivalence classes of Brauer tree algebras are determined by the number of edges and the multiplicity of the exceptional vertex. 
The proof is by observing that the number of edges $e$ and the determinant of the Cartan matrix $em+1$ of a star-shape Brauer tree algebra are invariant under 
derived equivalence. 

Hence, we conclude that $B$ is a Brauer tree algebra whose Brauer tree is a straight line without exceptional vertex.

\subsection{}
Since Hecke algebras of type $D$ are cellular by \cite[Theorem 1.1]{G}, we know that $B$ is a Brauer tree algebra whose Brauer tree is a straight line 
with at most one exceptional vertex by the Ohmatsu's theorem. Thus, to prove that there is no exceptional vertex, it suffices to show that $\Rad^3 B=0$. 
Consider Hecke algebras ${\mathcal H}$ of type $B$ with $Q=1$. 
As is explained in Appendix 2, $B$ is a subalgebra of a block algebra $A$ of ${\mathcal H}$ such that 
an irreducible $A$-module restricts to either an irreducible $B$-module or direct sum of two irreducible $B$-modules, and 
$A$ is of finite representation type. We have already proved that $A$ is a Brauer tree algebra whose Brauer tree is a straight line without exceptional vertex. 
Thus, for any indecomposable projective $B$-module, which is a direct summand of an indecomposable projective $A$-module viewed as a $B$-module, 
the length of the radical series is three. Hence, there is no exceptional vertex in the Brauer tree associated with $B$.

\section{Appendix 1}

Let $e\ge2$ be the quantum characteristic of $q\in\bR^\times$. Then, the Hecke algebra of type $B$ are cyclotomic quiver Hecke algebras of level two 
in type $A^{(1)}_\ell$, where $\ell=e-1$. We consider the case $-Q\not\in q^{\Z}$ from the introduction. Recall from \cite[3.2]{EN02} that any block algebra of a Hecke algebra of type $A$ which has finite representation type has $\ell$ simple modules, $S_1,\dots,S_\ell$ say, and their projective covers are given by 
$$
P_1=\begin{array}{c} S_1 \\ S_2 \\ S_1 \end{array}, \quad
P_i=\begin{array}{c} S_i \\ S_{i-1}\oplus S_{i+1} \\ S_i \end{array}\;\text{($2\le i\le \ell-1$)}, \quad
P_{\ell}=\begin{array}{c} S_{\ell} \\ S_{\ell-1} \\ S_{\ell} \end{array}
$$
if $\ell\ge2$. If $\ell=1$ then it is Morita equivalent to $\bR[x]/(x^2)$. 

Block algebras of tame representation type appear only when $\ell=1$ and they have two simple modules. Let $S$ and $T$ be the simple modules. Then 
the heart of the projective covers are either $H(P(S))=S \oplus T=H(P(T))$ or
$$
H(P(S))=S\oplus \begin{array}{c} T \\ S \\ T \end{array}, \quad
H(P(T))=\begin{array}{c} S \\ T \\ S \end{array}.
$$

Suppose that $B_1$ and $B_2$ are block algebras of Hecke algebras of type $A$. If $B_1$ and $B_2$ are of finite representation type, then 
we know the representation type of $B_1\otimes B_2$ by \cite[Lem.67]{Ar05}. It is of wild representation type if $\ell\ge2$ and it is of tame representation type if $\ell=1$. 

If $B_1$ is of finite representation type and $B_2$ is of tame representation type, then the quiver of $B_1\otimes B_2$ has two vertices $1$ and $2$ such that 
there are arrows $1\rightarrow 2$, $1\leftarrow 2$ and two loops on each vertex. Namely, it is the quiver which appeared in the proof of 
Proposition \ref{ell=1} and Proposition \ref{i=2}. 

If both $B_1$ and $B_2$ are of tame representation type, then the quiver of $B_1\otimes B_2$ has a connected subquiver which has two vertices and two loops on one of the vertices. 
Thus, they are of wild representation type.

\section{Appendix 2}

We assume that $\bR$ has an odd characteristic and consider block algebras of Hecke algebras in type $D$. As is explained in \cite[4.4]{Ar05}, we embed the Hecke algebra of 
type $D$ and of rank $n$ to the Hecke algebra of type $B$ with unequal parameter $Q=1$ of the same rank. It is generated by $T_0,\dots,T_{n-1}$ and their relations are
\begin{gather*}
T_0^2=1, \quad (T_i-q)(T_i+1)=0\;\;(1\le i<n)\\
T_0T_1T_0T_1=T_1T_0T_1T_0, \quad T_iT_{i+1}T_i=T_{i+1}T_iT_{i+1}\;\;(1\le i<n-1)\\
T_iT_j=T_jT_i\;\;(j\ne i\pm1).
\end{gather*}
Then, we have an involutive algebra automorphism $\sigma: T_0\mapsto -T_0$, $T_i\mapsto T_i$, for $i\ne 0$, and the Hecke algebra of type $D$ is the fixed point 
subalgebra of $\sigma$. We have another involutive algebra automorphism $\tau: T_1\mapsto T_0T_1T_0$, $T_i\mapsto T_i$, for $i\ne 1$. Note that
$\tau$ is the inner automorphism induced by $T_0$ and $\sigma\tau=\tau\sigma$ holds.

Let $A$ be a block algebra of the Hecke algebra of type $B$. Then $A$ covers one or two blocks of the Hecke algebra of type $D$. 
Let $B$ be the (direct sum of)  block algebra(s) covered by $A$, and we consider the induction and restriction functors. Then,
\begin{itemize}
\item
$\Res^A_B\circ\Ind^A_B(M)\simeq M\oplus M^\tau$, where $b\in B$ acts on $M^\tau$ by $\tau(b)$, for a $B$-module $M$.
\item
$\Ind^A_B\circ\Res^A_B(M)\simeq M^{\oplus 2}$, for an $A$-module $M$.
\item
If $B$ splits into two block algebras $B'$ and $B''$, then $M$ is a $B'$-module if and only if $M^\tau$ is a $B''$-module.
\end{itemize}
Using these simple facts, we know that $A$ and $B$, $B'$, $B''$ have the same representation type. 

\section{Appendix 3}

In this appendix, I explain computations from \cite{Ka15} to prove Theorem B. Hence, it suffices to consider $\fqHH(\beta)$ with 
$\beta=\lambda^0_i+k\delta$, for $0\le i\le \frac{\ell+1}{2}$ and $k\in\Z_{\ge0}$. 

\begin{lem}\label{formula1}
Let $\nu\in I^n$ and suppose that $\nu_i\ne \nu_{i+1}$. If $x_i^ke(s_i\nu)=cx_i^{k-1}x_je(s_i\nu)$, for some $j<i$ and $c\in \bR$, then 
$$
x_{i+1}^k\psi_i^2e(\nu)=cx_{i+1}^{k-1}x_j\psi_i^2e(\nu).
$$
\end{lem}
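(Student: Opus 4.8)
The plan is to derive the identity directly from the defining $\psi$–$x$ straightening relations of $\fqH(n)$, exploiting that the hypothesis $\nu_i\ne\nu_{i+1}$ produces the matching inequality $(s_i\nu)_i=\nu_{i+1}\ne\nu_i=(s_i\nu)_{i+1}$ at the relevant entries of $s_i\nu$. This is exactly the situation in which the Kronecker-delta correction term in the relation $x_{i+1}\psi_i e(\mu)=\psi_i x_i e(\mu)$ disappears, both for $\mu=\nu$ and for $\mu=s_i\nu$.

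The first step is to record a single intertwining identity. For any $\mu\in I^n$ with $\mu_i\ne\mu_{i+1}$ and any $m\ge0$, I claim
\[
x_{i+1}^m\psi_i e(\mu)=\psi_i x_i^m e(\mu).
\]
This follows by induction on $m$ from the relation $x_{i+1}\psi_i e(\mu)=\psi_i x_i e(\mu)$, together with the facts that the $x$'s commute among themselves and with the idempotents; the base case $m=0$ is trivial, and the inductive step only moves one $x_{i+1}$ across $\psi_i$ at a time.

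With this in hand, the main computation is a short chain. Using $\psi_i e(\nu)=e(s_i\nu)\psi_i$ to expose the idempotent $e(s_i\nu)$ between the two copies of $\psi_i$, then applying the intertwining identity at $\mu=s_i\nu$, and finally substituting the hypothesis $x_i^k e(s_i\nu)=c\,x_i^{k-1}x_j e(s_i\nu)$, I obtain
\[
x_{i+1}^k\psi_i^2 e(\nu)=x_{i+1}^k\psi_i\, e(s_i\nu)\,\psi_i=\psi_i x_i^k e(s_i\nu)\,\psi_i=c\,\psi_i x_i^{k-1}x_j e(s_i\nu)\,\psi_i.
\]
Because $j<i$, the generator $x_j$ commutes with $\psi_i$ and with every $x_l$, so it may be pulled to the left; reapplying the intertwining identity in the form $\psi_i x_i^{k-1}e(s_i\nu)=x_{i+1}^{k-1}\psi_i e(s_i\nu)$ and refolding $\psi_i e(s_i\nu)\psi_i=\psi_i^2 e(\nu)$ then gives
\[
c\,\psi_i x_i^{k-1}x_j e(s_i\nu)\,\psi_i=c\,x_j x_{i+1}^{k-1}\psi_i e(s_i\nu)\,\psi_i=c\,x_{i+1}^{k-1}x_j\,\psi_i^2 e(\nu),
\]
which is the desired identity.

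The argument is entirely formal, so I expect no genuine obstacle. The one thing to watch is the bookkeeping with the idempotents and the verification at each stage that the residue inequality holds, so that no delta-correction terms intervene: the condition $\nu_i\ne\nu_{i+1}$ guarantees this for both $\nu$ and $s_i\nu$, while the condition $j<i$ is precisely what lets $x_j$ pass freely through $\psi_i$.
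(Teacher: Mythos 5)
Your proof is correct and follows essentially the same route as the paper's: rewrite $x_{i+1}^k\psi_i^2e(\nu)=x_{i+1}^k\psi_ie(s_i\nu)\psi_i=\psi_ix_i^ke(s_i\nu)\psi_i$ using the delta-free intertwining relation (valid since $\nu_i\ne\nu_{i+1}$), substitute the hypothesis, and refold. The only difference is that you spell out the induction on the exponent and the final step of pulling $x_j$ past $\psi_i$, which the paper leaves implicit.
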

\begin{proof}
Note that $x_{i+1}^k\psi_i^2e(\nu)=x_{i+1}^k\psi_ie(s_i\nu)\psi_i=\psi_ix_i^ke(s_i\nu)\psi_i$. We replace $x_i^ke(s_i\nu)$ with $cx_i^{k-1}x_je(s_i\nu)$ to obtain the result.
\end{proof}

\begin{lem}\label{formula2}
Let $\nu\in I^n$ and suppose that $\nu_i=\nu_{i+2}\ne \nu_{i+1}$. If $x_{i+1}e(s_{i+1}\nu)=cx_je(s_{i+1}\nu)$, for some $j\le i$ and $c\in\bR$, then 
$$
x_{i+2}\psi_{i+1}\psi_i\psi_{i+1}e(\nu)=cx_j\psi_{i+1}\psi_i\psi_{i+1}e(\nu).
$$
\end{lem}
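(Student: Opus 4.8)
The plan is to mimic the short manipulation used for Lemma~\ref{formula1}: transport the variable $x_{i+2}$ to the spot where the idempotent $e(s_{i+1}\nu)$ sits, invoke the hypothesis there, and transport back. The entire computation uses only the defining relations of the quiver Hecke algebra, together with one observation about how $s_i$ acts on the residue sequence $s_{i+1}\nu$.

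First I would move $x_{i+2}$ rightward through the leftmost $\psi_{i+1}$. The residue sequence immediately to the right of that $\psi_{i+1}$ is $s_is_{i+1}\nu$, and at positions $i+1,i+2$ it reads $\nu_i,\nu_{i+1}$, which are distinct by the hypothesis $\nu_i=\nu_{i+2}\ne\nu_{i+1}$. Hence commuting $x_{i+2}$ past this $\psi_{i+1}$ produces no correction term, and I obtain
\[
x_{i+2}\psi_{i+1}\psi_i\psi_{i+1}e(\nu)=\psi_{i+1}x_{i+1}\psi_i\psi_{i+1}e(\nu).
\]

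The crucial step, and the only place where the assumption $\nu_i=\nu_{i+2}$ is genuinely used, is to recognize that the sector now carrying $x_{i+1}$ is exactly $e(s_{i+1}\nu)$. Indeed $s_is_{i+1}\nu=s_{i+1}\nu$, because $(s_{i+1}\nu)_i=\nu_i$ and $(s_{i+1}\nu)_{i+1}=\nu_{i+2}$ coincide, so $s_i$ fixes $s_{i+1}\nu$. Thus $\psi_i\psi_{i+1}e(\nu)=e(s_{i+1}\nu)\psi_i\psi_{i+1}$, and I may rewrite the right-hand side as $\psi_{i+1}\bigl(x_{i+1}e(s_{i+1}\nu)\bigr)\psi_i\psi_{i+1}$. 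Applying the hypothesis $x_{i+1}e(s_{i+1}\nu)=cx_je(s_{i+1}\nu)$ turns this into $c\psi_{i+1}x_je(s_{i+1}\nu)\psi_i\psi_{i+1}=c\psi_{i+1}x_j\psi_i\psi_{i+1}e(\nu)$. Finally, since $j\le i$ the variable $x_j$ commutes with $\psi_{i+1}$, so it slides past the leftmost $\psi_{i+1}$ to give $cx_j\psi_{i+1}\psi_i\psi_{i+1}e(\nu)$, as desired; note that no variable is ever pushed past $\psi_i$, so no further corrections intervene.

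The main obstacle to anticipate is exactly the temptation to push $x_{i+2}$ through all three braid letters: doing so forces the relation $\psi_{i+1}^2e(\nu)=Q_{\nu_{i+1},\nu_{i+2}}(x_{i+1},x_{i+2})e(\nu)$ into play and buries the identity under $Q$-polynomial corrections that do not obviously cancel. Spotting the idempotent coincidence $s_is_{i+1}\nu=s_{i+1}\nu$ is what lets me stop after a single commutation and apply the hypothesis cleanly, avoiding all of that.
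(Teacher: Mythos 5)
Your proof is correct and follows essentially the same route as the paper's: both hinge on the observation $s_is_{i+1}\nu=s_{i+1}\nu$, commute $x_{i+2}$ past the leftmost $\psi_{i+1}$ (with no correction term, since the adjacent residues there are $\nu_i\ne\nu_{i+1}$), apply the hypothesis at the idempotent $e(s_{i+1}\nu)$, and slide $x_j$ (which commutes with $\psi_{i+1}$ because $j\le i$) back to the front. The only difference is presentational: you make explicit the vanishing of the $\delta$-correction and the final commutation of $x_j$, which the paper's two-line proof leaves implicit.
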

\begin{proof}
Noting that $s_is_{i+1}\nu=s_{i+1}\nu$, we rewrite the left hand side to $\psi_{i+1}x_{i+1}e(s_{i+1}\nu)\psi_i\psi_{i+1}$. Then we replace 
$x_{i+1}e(s_{i+1}\nu)$ with $cx_je(s_{i+1}\nu)$.
\end{proof}

\begin{lem}\label{formula3}
Let $\nu\in I^n$ and suppose that $\nu_i=\nu_{i+1}$. Then, $x_i^2e(\nu)=0$ implies 
$$
x_{i+1}e(\nu)=-x_ie(\nu).
$$
\end{lem}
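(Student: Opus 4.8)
The plan is to localise the whole computation to the two strands $i$ and $i+1$. Since $\nu_i=\nu_{i+1}$ we have $Q_{\nu_i\nu_{i+1}}=0$, so $\psi_i^2e(\nu)=0$, and the three elements $x_ie(\nu)$, $x_{i+1}e(\nu)$, $\psi_ie(\nu)$ satisfy exactly the defining relations of the rank two nilHecke algebra $\mathrm{NH}_2$ inside $e(\nu)\fqH(n)e(\nu)$ (note that $e(\nu)$ is central in the subalgebra they generate, because $s_i\nu=\nu$ forces $\psi_ie(\nu)=e(\nu)\psi_i$). Thus there is a unital algebra homomorphism $\phi\colon \mathrm{NH}_2\to e(\nu)\fqH(n)e(\nu)$ carrying the generators $a,b,\partial$ of $\mathrm{NH}_2$ to $x_ie(\nu)$, $x_{i+1}e(\nu)$, $\psi_ie(\nu)$. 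The hypothesis reads $\phi(a^2)=0$, and the desired conclusion $x_{i+1}e(\nu)=-x_ie(\nu)$ is precisely $\phi(a+b)=0$. Hence it suffices to show that $a+b$ lies in the two-sided ideal $\langle a^2\rangle$ of $\mathrm{NH}_2$.

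Here I would invoke the classical isomorphism $\mathrm{NH}_2\cong M_2(\bR[\epsilon_1,\epsilon_2])$, where $\epsilon_1=a+b$ and $\epsilon_2=ab$ are central and $\{1,a\}$ is a basis of the natural module. Writing the matrix of $a^2$ in this presentation, its entries generate the ideal $(\epsilon_1,\epsilon_2)$ of the centre, so $\langle a^2\rangle=M_2\big((\epsilon_1,\epsilon_2)\big)$. In particular the central element $a+b=\epsilon_1$ already lies in $\langle a^2\rangle\subseteq\Ker\phi$, which yields $(x_i+x_{i+1})e(\nu)=0$ immediately. Equivalently, modulo $a^2$ the centre collapses to $\bR$, so $b=\epsilon_1-a=-a$ in $\mathrm{NH}_2/\langle a^2\rangle\cong M_2(\bR)$.

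If one prefers to stay inside $\fqH(n)$ and not quote the matrix description, the same argument can be run with explicit idempotents. Because $\psi_i^2e(\nu)=0$, the element $E=x_i\psi_ie(\nu)$ is, up to sign, idempotent, and $e(\nu)=E+(e(\nu)-E)$. A short manipulation with the $x$--$\psi$ commutation relations gives the algebra identity $\psi_ix_i^2\psi_ie(\nu)=(x_i+x_{i+1})\psi_ie(\nu)$ (up to an overall sign depending on the normalisation); since $x_i^2e(\nu)=0$ makes the left-hand side vanish, one gets $(x_i+x_{i+1})E=x_i(x_i+x_{i+1})\psi_ie(\nu)=0$. A slightly longer but entirely parallel computation establishes $\psi_ix_i^2\big(e(\nu)-E\big)=(x_i+x_{i+1})\big(e(\nu)-E\big)$, whose left-hand side equals $\psi_ix_i^2e(\nu)-\psi_ix_i^3\psi_ie(\nu)=0$ because $x_i^ke(\nu)=0$ for all $k\ge2$. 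Adding the two pieces gives $(x_i+x_{i+1})e(\nu)=0$.

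The step I expect to be the real obstacle is exactly the passage from $(x_i+x_{i+1})\psi_ie(\nu)=0$ to $(x_i+x_{i+1})e(\nu)=0$. A single sweep of the defining relations only produces the weaker identity with the trailing $\psi_i$ still attached, and every naive attempt to cancel that $\psi_i$ turns out to be circular. Removing it genuinely uses the matrix-algebra structure of $\mathrm{NH}_2$ (equivalently, the splitting of $e(\nu)$ by the idempotent $E$), and that is where the content of the lemma sits.
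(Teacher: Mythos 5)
Your main argument (the first two paragraphs) is correct, and it takes a genuinely different route from the paper. The paper proves the lemma by bare hands: it substitutes the relation $e(\nu)=(\psi_ix_{i+1}-x_i\psi_i)e(\nu)$ into $x_{i+1}e(\nu)$ twice, until every surviving term carries a factor $\psi_i^2e(\nu)$ or $x_i^2e(\nu)$, and both vanish by hypothesis. You instead package the corner data $x_ie(\nu),x_{i+1}e(\nu),\psi_ie(\nu)$ into a unital homomorphism $\phi\colon\mathrm{NH}_2\to e(\nu)\fqH(n)e(\nu)$ (legitimate: $s_i\nu=\nu$ makes $e(\nu)$ commute with $\psi_i$, and $Q_{\nu_i\nu_{i+1}}=0$ gives $\psi_i^2e(\nu)=0$, so the corner elements do satisfy the nilHecke presentation), and then verify inside $\mathrm{NH}_2\cong M_2\bigl(\bR[\epsilon_1,\epsilon_2]\bigr)$ that the central element $a+b=\epsilon_1$ lies in $\langle a^2\rangle=M_2\bigl((\epsilon_1,\epsilon_2)\bigr)$; your computation of the matrix of $a$ in the basis $\{1,a\}$ and of the ideal generated by the entries of its square is right. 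What each approach buys: the paper's computation is self-contained and uses nothing beyond the defining relations, while yours is structural and explains the mechanism (killing $a^2$ kills the whole augmentation ideal of the centre), at the cost of importing the classical faithfulness/matrix realization of $\mathrm{NH}_2$.

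The remaining two paragraphs, however, contain genuine errors, even though they do not affect the proof above. The first identity in your third paragraph is fine: with the KLR conventions one gets $\psi_ix_i^2\psi_ie(\nu)=-(x_i+x_{i+1})\psi_ie(\nu)$, whence $(x_i+x_{i+1})\psi_ie(\nu)=0$. But the companion identity $\psi_ix_i^2\bigl(e(\nu)-E\bigr)=\pm(x_i+x_{i+1})\bigl(e(\nu)-E\bigr)$ is false for either sign: writing $a,b,\partial$ for the images of the generators (so $\partial b=a\partial+1$, $b\partial=\partial a+1$, $\partial^2=0$), one computes
$$
\partial a^2=b^2\partial-(a+b),\qquad \partial a^3\partial=-(a^2+ab+b^2)\partial,
$$
so that $\partial a^2(1-a\partial)=(a^2+ab+2b^2)\partial-(a+b)$, which differs from $-(a+b)(1-a\partial)=-(a+b)+(a^2+ab)\partial$ by $2b^2\partial\neq0$, and from $+(a+b)(1-a\partial)$ by even more. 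So that ``relations-only'' route collapses exactly at the step you lean on. Moreover, your closing diagnosis --- that passing from $(x_i+x_{i+1})\psi_ie(\nu)=0$ to $(x_i+x_{i+1})e(\nu)=0$ is the real obstacle and genuinely needs the matrix structure --- is mistaken. Since the $x$'s commute with each other and with $e(\nu)$, multiplying the defining relation on the left by $x_i+x_{i+1}$ gives
$$
(x_i+x_{i+1})e(\nu)=(x_i+x_{i+1})(\psi_ix_{i+1}-x_i\psi_i)e(\nu)
=\bigl[(x_i+x_{i+1})\psi_ie(\nu)\bigr]x_{i+1}-x_i\bigl[(x_i+x_{i+1})\psi_ie(\nu)\bigr]=0.
$$
This one line repairs your third paragraph, and it is in essence what the paper's own proof does.
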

\begin{proof}
As $\nu_i=\nu_{i+1}$, we have $\psi_i^2e(\nu)=0$ and $(\psi_ix_{i+1}-x_i\psi_i)e(\nu)=e(\nu)$. We start with
$$
x_ie(\nu)= x_i(\psi_ix_{i+1}-x_i\psi_i)e(\nu)=x_i\psi_ix_{i+1}e(\nu)-x_i^2e(\nu)\psi_i=x_i\psi_ix_{i+1}e(\nu).
$$
Then we deduce
\begin{align*}
x_{i+1}e(\nu)&=(\psi_ix_{i+1}-x_i\psi_i)e(\nu)x_{i+1}=\psi_ix_{i+1}^2e(\nu)-x_i\psi_ix_{i+1}e(\nu)\\
&=\psi_i(x_{i+1}e(\nu))x_{i+1}-x_ie(\nu),
\end{align*}
so that
\begin{align*}
x_{i+1}e(\nu)&= \psi_i(\psi_ix_{i+1}^2e(\nu)-x_ie(\nu))x_{i+1}-x_ie(\nu)\\
&=\psi_i^2x_{i+1}^3e(\nu)-\psi_i(x_{i+1}e(\nu))x_i-x_ie(\nu)\\
&=\psi_i^2x_{i+1}^3e(\nu)-\psi_i(\psi_ix_{i+1}^2e(\nu)-x_ie(\nu))x_i-x_ie(\nu)\\
&=\psi_i^2e(\nu)x_{i+1}^3-\psi_i^2e(\nu)x_ix_{i+1}^2+\psi_ix_i^2e(\nu)-x_ie(\nu).
\end{align*}
Using $\psi_i^2e(\nu)=0$ and $x_i^2e(\nu)=0$, we obtain the result.
\end{proof}

\begin{prop}\label{i=2 for s=0}
Suppose $\ell\ge3$. Then $\fqHH(\lambda^0_2)$ is wild if ${\rm char}\bR=2$, tame if ${\rm char}\bR\ne2$.
\end{prop}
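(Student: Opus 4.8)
The plan is to compute the basic algebra of $\fqHH(\lambda^0_2)$ explicitly and read off its representation type, arranging the argument so that $\operatorname{char}\bR$ enters only through the relations and not through the quiver. Since $\lambda^0_2=2\alpha_0+\alpha_1+\alpha_\ell$ is a sum of only four simple roots, the residue sequences $\nu\in I^\beta$ are the rearrangements of $(0,0,1,\ell)$, and because $\ell\ge3$ the residues $0,1,\ell$ are pairwise distinct, with $0$ adjacent (in the cyclic Dynkin diagram) to each of $1$ and $\ell$ but $1$ and $\ell$ non-adjacent; this is exactly where the hypothesis $\ell\ge3$ is used. First I would cut down the idempotents: the cyclotomic relation gives $\langle h_i,2\Lambda_0\rangle=2\delta_{i0}$, so $e(\nu)=0$ unless $\nu_1=0$ (for $\nu_1\ne0$ the relation reads $x_1^0e(\nu)=e(\nu)=0$), and for $\nu_1=0$ we have $x_1^2e(\nu)=0$. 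The surviving words are the six sequences beginning with $0$, and they are organized by the standard bitableaux of the six bipartitions $((2,2),\emptyset)$, $(\emptyset,(2,2))$, $((2,1),(1))$, $((1),(2,1))$, $((2),(1,1))$, $((1,1),(2))$ carrying content $\{0,0,1,\ell\}$.

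Next I would apply Theorem \ref{Thm: dimension formula} to compute the graded dimensions $\dim_{\bf q}e(\nu')\fqHH(\lambda^0_2)e(\nu)$. These are weighted counts of standard bitableaux and hence \emph{independent of} ${\rm char}\bR$. Feeding them into the Lemma of Section~1 pins down the quiver of the basic algebra: the algebra is positively graded with no degree-one part, so the radical begins in degree two, $\Rad^2$ begins in degree four, and the coefficient of ${\bf q}^2$ in each spherical piece counts exactly the arrows. Consequently the number of vertices (equal to the number of weight-$(\Lambda-\beta)$ crystal vertices, itself characteristic-independent) and the underlying quiver are the \emph{same} in every characteristic. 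The entire tame/wild dichotomy must therefore come from the relations, not from the Ext-quiver.

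The heart of the argument is thus the determination of the multiplication of arrows. Here I would fix a monomial spanning set $\psi_wx^ae(\nu)$ and reduce it modulo the KLR relations, using Lemmas \ref{formula1}, \ref{formula2} and \ref{formula3} to rewrite the powers of the $x$-generators sitting on the repeated residue $0$. The decisive input is Lemma \ref{formula3}: whenever the two residue-$0$ nodes are adjacent in a word, $x_i^2e(\nu)=0$ forces $x_{i+1}e(\nu)=-x_ie(\nu)$. When ${\rm char}\bR\ne2$ this is a genuine sign relation which identifies the two relevant length-two paths up to the nonzero scalar $-1$ and makes the arrow products satisfy the relations of a tame (special biserial / Brauer-type) algebra, which I would then match against the known list of tame algebras. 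When ${\rm char}\bR=2$ the sign collapses, the two paths coincide, and the resulting extra slack is expected to produce a spherical subalgebra $e\fqHH(\lambda^0_2)e$ whose quiver is the wild configuration (two loops on one vertex together with a pair of opposite arrows) already met in the proofs of Proposition \ref{i=0}, Proposition \ref{ell=1} and Proposition \ref{i=2}, from which wildness follows.

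The hard part will be precisely this last step: extracting the exact arrow multiplication from the KLR relations and verifying that the sign $-1$ in Lemma \ref{formula3} is exactly what separates the two regimes. Concretely, I expect the crux to be showing that one specific composite of two arrows is \emph{nonzero} when ${\rm char}\bR\ne2$ (forcing a relation that keeps the algebra tame) but \emph{vanishes} when ${\rm char}\bR=2$, and then, on the one hand, identifying the char-$\ne2$ basic algebra with a named tame algebra and, on the other, exhibiting an explicit wild subquotient in characteristic $2$. Tracking the sign through the reductions and confirming that no competing relation restores tameness in characteristic $2$ is the delicate bookkeeping that the three preliminary lemmas are designed to control.
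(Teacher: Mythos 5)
Your overall frame --- compute the basic algebra of $\fqHH(\lambda^0_2)$ explicitly from the KLR presentation, with Lemmas \ref{formula1}--\ref{formula3} controlling the reduction of the $x$-generators, and locate the characteristic dependence in the relations rather than in the number of simples --- is indeed the paper's strategy (the paper works with just the two idempotents $e_1=e(010\ell)$ and $e_2=e(01\ell0)$, one per simple module). But two of your key steps are broken. First, the quiver determination. Because the residues $0$ and $\ell$ are adjacent in the Dynkin diagram, $\psi_3e_2$ has degree $-(\alpha_0|\alpha_\ell)=1$, so $\dim_{\bf q}e_1\fqHH(\lambda^0_2)e_2={\bf q}+{\bf q}^3$: your assertion that the algebra has no degree-one part is false, the hypothesis of the Section~1 Lemma is violated, and the coefficient of ${\bf q}^2$ does \emph{not} count loops. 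Indeed $\dim_{\bf q}e_1\fqHH(\lambda^0_2)e_1=1+2{\bf q}^2+{\bf q}^4$, but one of the two degree-two elements, namely $\mu\nu=\psi_3^2e_1=(x_4-\lambda x_1)e_1$ (with $\mu=e_1\psi_3e_2$, $\nu=e_2\psi_3e_1$), already lies in $\Rad^2$. Your recipe would output two loops at each vertex and no connecting arrows, i.e.\ a wild quiver in every characteristic, contradicting the tame half of the statement; the true quiver, in \emph{both} characteristics, has one loop at each vertex and one arrow in each direction.

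Second, the mechanism you predict for the dichotomy is not the right one. The conclusion $x_{i+1}e(\nu)=-x_ie(\nu)$ of Lemma \ref{formula3} holds verbatim in characteristic $2$ (it just reads $x_{i+1}e(\nu)=x_ie(\nu)$) and plays exactly the same role there; nothing collapses at that stage. Likewise no composite of \emph{two} arrows distinguishes the characteristics: $\mu\nu$, $\nu\mu$, $\alpha\mu=\mu\beta$ and $\beta\nu=\nu\alpha$ are all nonzero basis elements in every characteristic (here $\alpha=x_1e_1$, $\beta=x_1e_2$). What separates the cases is the coefficient $2\lambda$ in the cubic relations $\mu\nu\mu+2\lambda\alpha\mu=0$ and $\nu\mu\nu+2\lambda\beta\nu=0$, where the factor $2$ arises as the sum of two equal contributions: $\mu\nu\mu=(x_4-\lambda x_1)\psi_3e_2$ together with $x_4\psi_3e_2=\psi_3x_3e_2=-\lambda x_1\psi_3e_2$. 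In characteristic $\ne2$ one passes to $\alpha'=\mu\nu+2\lambda\alpha$, $\beta'=\nu\mu+2\lambda\beta$ and recognizes a special biserial, hence tame, algebra; in characteristic $2$ the cubic relations degenerate to $\mu\nu\mu=\nu\mu\nu=0$, the quiver being unchanged, and wildness is obtained not from the shape of the quiver but by imposing the extra relation $\nu\alpha=0$ and recognizing the resulting quotient as a known wild algebra. As written, your plan closes neither half of the proof, and your prediction that in characteristic $2$ one finds the two-loops-on-one-vertex configuration also contradicts your own (correct) expectation that the quiver is characteristic-independent.
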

\begin{proof}
Let $e_1=e(010\ell)$ and $e_2=e(01\ell0)$. If $e_1x_1^{a_1}x_2^{a_2}x_3^{a_3}x_4^{a_4}\psi_we_1\ne0$, for $a_1,a_2,a_3,a_4\ge0$ and $w\in S_4$, 
then either $\psi_w=1$ or $\psi_w=\psi_1\psi_2\psi_1$. However, $\psi_1e_1=e(100\ell)\psi_1=0$ implies that the latter does not occur. 
We show that $e_1\fqHH(\lambda^0_2)e_1$ has the basis 
$$
\{e_1, e_1x_1e_1, e_1x_4e_1, e_1x_1x_4e_1\}.
$$
First, $\psi_1^2e_1=0$ implies $x_2e_1=-x_1e_1$. Next, we apply Lemma \ref{formula3} to $x_1^2e(001\ell)=0$ and obtain $x_2e(001\ell)=-x_1e(001\ell)$. Then Lemma \ref{formula2} implies
$x_3\psi_2\psi_1\psi_2e_1=-x_1\psi_2\psi_1\psi_2e_1$ and we may conclude $x_3e_1=-x_1e_1$ because
$$
\psi_2\psi_1\psi_2e_1=(\psi_2\psi_1\psi_2-\psi_1\psi_2\psi_1)e_1=e_1.
$$
Finally, $\psi_1e(0\ell10)=0$ implies $x_2e(0\ell10)=-\lambda x_1e(0\ell10)$ by $\psi_1^2e(0\ell10)=0$. Thus, applying Lemma \ref{formula1}, we obtain
$x_3e_2=x_3\psi_2^2e_2=-\lambda x_1\psi_2^2e_2=-\lambda x_1e_2$. 
Applying Lemma \ref{formula1} again, we obtain $x_4\psi_3^2e_1=-\lambda x_1\psi_3^2e_1$, that is,
$$
x_4(\lambda x_3+x_4)e_1=-\lambda x_1(\lambda x_3+x_4)e_1,
$$
and $x_4^2e_1=\lambda^2x_1^2e_1=0$ follows. 

Noting that $(x_1+x_2)e_2=\psi_1^2e_2=0$ and that Lemma \ref{formula1} applied to $x_3e_1=-x_1e_1$ implies $x_4\psi_3^2e_2=-x_1\psi_3^2e_2$, so that 
$x_4^2e_2=0$ follows,  we also deduce that $e_2\fqHH(\lambda^0_2)e_2$ has the basis 
$$
\{e_2, e_2x_1e_2, e_2x_4e_2, e_2x_1x_4e_2\}.
$$
If $e_1\psi_we_2\ne0$ then $\psi_w=\psi_3$. Further, $x_2\psi_3e_2=\psi_3x_2e_2=-x_1\psi_3e_2$ and
$$
x_3\psi_3e_2=x_3e_1\psi_3=-x_1\psi_3e_2, \;\; x_4\psi_3e_2=\psi_3x_3e_2=-\lambda x_1\psi_3e_2
$$
imply that $e_1\fqHH(\lambda^0_2)e_2$ has the basis $\{e_1\psi_3e_2, e_1x_1\psi_3e_2\}$. Then, 
$e_2\fqHH(\lambda^0_2)e_1$ has the basis $\{e_2\psi_3e_1, e_2x_1\psi_3e_1\}$. Thus, we have obtained explicit basis of $e\fqHH(\lambda^0_2)e$, for $e=e_1+e_2$. 
As the number of simple $\fqHH(\lambda^0_2)$-modules is two, $A$ is the basic algebra of $\fqHH(\lambda^0_2)$. 

Let $\alpha=x_1e_1$, $\beta=x_1e_2$, $\mu=\psi_3e_2$ and $\nu=\psi_3e_1$. Then, it is easy to see that 
$A$ is isomorphic to the path algebra defined by the quiver with two vertices $1$ and $2$, 
a loop $\alpha$ on the vertex $1$, a loop $\beta$ on the vertex $2$, and arrows $\mu$ and $\nu$ from vertex $1$ to vertex $2$ and vertex $2$ to vertex $1$,
modulo the ideal generated by
$$
\alpha^2,\; \beta^2,\; \alpha\mu-\mu\beta,\; \beta\nu-\nu\alpha,\; \mu\nu\mu+2\lambda\alpha\mu,\; \nu\mu\nu+2\lambda\beta\nu.
$$
If ${\rm char}{\bR}=2$, we consider the algebra with the additional relation $\nu\alpha=0$ to conclude that $A$ is wild. If ${\rm char}\bR\ne2$, we replace $\alpha$ and $\beta$ 
with $\alpha'=\mu\nu+2\lambda\alpha$ and $\beta'=\nu\mu+2\lambda\beta$ and observe that $A$ is a special biserial algebra. 
\end{proof}

\begin{prop}\label{i=3 for s=0}
Suppose $\ell\ge5$. Then $\fqHH(\lambda^0_3)$ is wild.
\end{prop}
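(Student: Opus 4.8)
The plan is to follow the spherical-subalgebra method used in Proposition \ref{i=2} and Proposition \ref{i=2 for s=0}. I would exhibit an idempotent $e=e_1+e_2$, with $e_1=e(\nu^{(1)})$ and $e_2=e(\nu^{(2)})$ for two carefully chosen residue sequences, and show that the quiver of $A=e\fqHH(\lambda^0_3)e$ is the wild quiver with two vertices bearing two loops on one of them together with an arrow in each direction --- the very quiver appearing in Proposition \ref{i=0} and Proposition \ref{i=2}. Once the graded dimensions $\dim_{\bf q}e_iAe_j$ are computed, the lemma following Theorem \ref{Thm: dimension formula} reads off this quiver, and wildness of $A$, hence of $\fqHH(\lambda^0_3)$, is immediate.

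To select the idempotents, I would first record the residue pattern of $\lambda^0_3$. Arranging its nine simple roots on the $3\times3$ square as in the rectangle diagram of Lemma \ref{wildness propagation 1}, the rows carry residues $(0,1,2)$, $(\ell,0,1)$ and $(\ell-1,\ell,0)$, so $0$ occurs three times, $1$ and $\ell$ twice, and $2$ and $\ell-1$ once. This is the content of the staircase bipartition $\mu=((3,2,1),(2,1))$, the level-two analogue of the shape $((2,1),(1))$ underlying Proposition \ref{i=2 for s=0}. The hypothesis $\ell\ge5$ makes $0,1,2,\ell-1,\ell$ pairwise distinct and isolates the singly occurring residues $2$ and $\ell-1$; it is exactly this distinctness that rigidifies the bitableau combinatorics. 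For $e_1$ and $e_2$ I would take two standard bitableaux of $\mu$ differing by a single adjacent transposition whose entries are Dynkin-adjacent residues, in direct analogy with the passage $e(010\ell)\rightarrow e(01\ell0)$ in Proposition \ref{i=2 for s=0}.

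I would then compute $\dim_{\bf q}e_iAe_j=\sum_{\lambda}K_{\bf q}(\lambda,\nu^{(i)})K_{\bf q}(\lambda,\nu^{(j)})$ by Theorem \ref{Thm: dimension formula}, the sum being over the bipartitions $\lambda$ of content $\lambda^0_3$. The target is to verify $\dim_{\bf q}e_iAe_i=1+2{\bf q}^2+{\bf q}^4$ for at least one $i$, together with $\dim_{\bf q}e_1Ae_2=\dim_{\bf q}e_2Ae_1={\bf q}^2$, as this is precisely the data that forces the wild quiver as in Proposition \ref{i=0}. As an alternative I could produce explicit $\bR$-bases of the spaces $e_iAe_j$ through the relation reductions of Lemmas \ref{formula1}, \ref{formula2} and \ref{formula3}, lowering powers of the $x_k$ on each idempotent; this is the route of Proposition \ref{i=2 for s=0}, but it is considerably heavier for a nine-node algebra.

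The main obstacle is the bookkeeping in the last step: for nine nodes one must enumerate the bipartitions of content $\lambda^0_3$ and, for each, count its standard bitableaux with the prescribed residue sequence weighted by degree. The saving grace is the genericity forced by $\ell\ge5$: since $2$ and $\ell-1$ appear only once and are separated by residues that never occur, almost every bipartition has vanishing $K_{\bf q}(\lambda,\nu^{(i)})$, so only a few shapes survive and the degree count stays finite and small. The delicate point will be confirming the asymmetry of $2{\bf q}^2$ on one diagonal term against exactly ${\bf q}^2$ off the diagonal, since it is this asymmetry --- two loops at one vertex but a single arrow each way --- that distinguishes the wild quiver from a tame configuration, and this is also what keeps the argument independent of the characteristic of $\bR$.
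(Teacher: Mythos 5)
Your overall strategy (truncate by an idempotent, read the quiver off graded dimensions) is the right one for $s\ge1$, but your concrete plan breaks down at the decisive step, and the failure is not incidental: it is exactly the phenomenon that separates $\Lambda=2\Lambda_0$ from $\Lambda=\Lambda_0+\Lambda_s$ with $s\ge1$. If $e_1=e(\nu^{(1)})$ and $e_2=e(\nu^{(2)})$ differ by a \emph{single} adjacent transposition of Dynkin-adjacent residues, as you propose in analogy with the pair $e(010\ell)$, $e(01\ell0)$, then $e_2\psi_ke_1$ is an element of degree $-(\alpha_{\nu_k}|\alpha_{\nu_{k+1}})=1$, and in all the analogous $s=0$ computations it is nonzero: in Proposition \ref{i=2 for s=0} the off-diagonal spaces have bases $\{e_1\psi_3e_2,\,e_1x_1\psi_3e_2\}$, i.e.\ graded dimension ${\bf q}+{\bf q}^3$, not ${\bf q}^2$. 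So your target $\dim_{\bf q}e_1Ae_2=\dim_{\bf q}e_2Ae_1={\bf q}^2$ is untenable for this choice, and worse, the presence of degree-one elements violates the hypothesis of the lemma following Theorem \ref{Thm: dimension formula}: once degree-one elements exist, $\Rad^2A$ contains degree-two elements, and degree-two diagonal elements can no longer be read as loops. This is precisely why Proposition \ref{i=2} (for $s\ge1$) chooses sequences differing in \emph{two} places, a trick with no analogue here, and why Kakei's $s=0$ computations (Propositions \ref{i=2 for s=0} and \ref{delta for s=0}) abandon dimension counting in favour of explicit relations. The warning sign that no dimension-only argument can be taken for granted when $s=0$: $\fqHH(\lambda^0_2)$ is actually \emph{tame} when ${\rm char}\,\bR\ne2$ and wild when ${\rm char}\,\bR=2$, so representation type in this family depends on data (characteristic, the parameter $\lambda$) that graded dimensions cannot see. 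To carry out your plan you would have to exhibit a pair of idempotents for $\lambda^0_3$ all four of whose graded dimension polynomials are free of odd-degree terms and have the required degree-two coefficients; you give no such pair, and the natural candidates fail as above.

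The paper's proof takes a different route: a \emph{single} idempotent $e=e(0\,1\,2\,\ell\,0\,1\,\ell-1\,\ell\,0)$, and explicit relation computations (via Lemmas \ref{formula1}--\ref{formula3}) showing that $e\fqHH(\lambda^0_3)e$ has basis $\{x_1^ax_5^bx_9^ce\mid a,b,c=0,1\}$ with $x_1^2e=x_5^2e=x_9^2e=0$; that is, it is the commutative local algebra $\bR[x,y,z]/(x^2,y^2,z^2)$, which is wild over any field (already its quotient by the square of the radical is), giving the characteristic-independence you were worried about. If you want to keep the dimension-formula flavour, the viable variant is a one-vertex version of your argument: using Theorem \ref{Thm: dimension formula} one can check that exactly eight bipartitions admit a standard bitableau with this residue sequence, each exactly one such tableau, and that the degrees yield $\dim_{\bf q}e\fqHH(\lambda^0_3)e=(1+{\bf q}^2)^3$; since the degree-zero part is one-dimensional and the degree-one part vanishes, $e\fqHH(\lambda^0_3)e$ is a positively graded local algebra with $\dim\Rad/\Rad^2\ge3$, i.e.\ at least three loops in its quiver, hence wild. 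Note that the wild configuration here is ``three loops at one vertex,'' not the two-vertex quiver of Proposition \ref{i=0} that you were aiming for.
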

\begin{proof}
We consider $A=e\fqHH(\lambda^0_3)e$ with $e=e(\nu)$ where $\nu=(0\;1\;2\;\ell\;0\;1\;\ell-1\;\ell\;0)$. Our goal is to show that 
$A$ has the basis $\{ x_1^ax_5^bx_9^ce \mid a,b,c=0,1\}$ and $x_1^2e=x_5^2e=x_9^2e=0$. 

If $e\psi_we\ne0$, we may conclude that $\psi_w=1$. The relations $\psi_1^2e=\psi_2^2e=0$ imply $x_2e=-x_1e$ and $x_3e=x_1e$. Applying Lemma \ref{formula1} twice to
$\psi_1^2e(s_2s_3\nu)=(\lambda x_1+x_2)e(s_2s_3\nu)=0$, we obtain $x_3e(s_3\nu)=-\lambda x_1e(s_3\nu)$ and then $x_4e=-\lambda x_1e$. 

Lemma \ref{formula3} implies $x_2e(s_2s_3s_4\nu)=-x_1e(s_2s_3s_4\nu)$. Using $\psi_1e(s_3s_4\nu)=0$, we can show 
$\psi_2\psi_1\psi_2e(s_3s_4\nu)=e(s_3s_4\nu)$. Thus, using Lemma \ref{formula2}, we obtain
$$
x_3e(s_3s_4\nu)=-x_1e(s_3s_4\nu).
$$
Now we apply Lemma \ref{formula1} to this equality and conclude $x_5^2e=0$. $x_6e=-x_5e$ follows from $\psi_5^2e=0$. Using $e(s_4s_5s_6\nu)=0$, we deduce 
$x_4e(s_5s_6\nu)+x_5e(s_5s_6\nu)=\psi_4^2e(s_5s_6\nu)=0$. Then, applying Lemma \ref{formula1} twice, we have $x_7e=-x_4e=\lambda x_1e$. Next we consider $x_8e$. 
\begin{align*}
x_8e&= x_8e(0\;1\;2\;\ell\;0\;1\;\ell-1\;\ell\;0)\\
&=x_8\psi_6^2e(0\;1\;2\;\ell\;0\;1\;\ell-1\;\ell\;0)\\
&=\psi_6x_8e(0\;1\;2\;\ell\;0\;\ell-1\;1\;\ell\;0)\psi_6\\
&=\psi_6x_8\psi_7^2e(0\;1\;2\;\ell\;0\;\ell-1\;1\;\ell\;0)\psi_6\\
&=\psi_6\psi_7x_7e(0\;1\;2\;\ell\;0\;\ell-1\;\ell\;1\;0)\psi_7\psi_6\\
&=\psi_6\psi_7x_7\psi_5^2e(0\;1\;2\;\ell\;0\;\ell-1\;\ell\;1\;0)\psi_7\psi_6\\
&=\psi_6\psi_7\psi_5x_7e(0\;1\;2\;\ell\;\ell-1\;0\;\ell\;1\;0)\psi_5\psi_7\psi_6.
\end{align*}
Now $e(0\;1\;2\;\ell\;\ell-1\;\ell\;0\;1\;0)=0$ implies $\psi_6^2e(0\;1\;2\;\ell\;\ell-1\;0\;\ell\;1\;0)=0$, and it follows that 
$$
x_8e=\psi_6\psi_7\psi_5(-\lambda x_6)e(0\;1\;2\;\ell\;\ell-1\;0\;\ell\;1\;0)\psi_5\psi_7\psi_6=-\lambda x_5e.
$$

Before continuing computation, we prepare $x_4e(s_6s_7s_8\nu)=-\lambda x_1e(s_6s_7s_8\nu)$: to prove it, apply Lemma \ref{formula1} twice to 
$(\lambda x_1+x_2)e(s_2s_3s_6s_7s_8\nu)=\psi_1^2e(s_2s_3s_6s_7s_8\nu)=0$. 

Lemma \ref{formula3} implies $x_2e(s_2s_3s_4s_6s_7s_8\nu)=-x_1e(s_2s_3s_4s_6s_7s_8\nu)$. As $\psi_1e(s_3s_4s_6s_7s_8\nu)=0$, 
$\psi_2\psi_1\psi_2e(s_3s_4s_6s_7s_8\nu)=e(s_3s_4s_6s_7s_8\nu)$ holds, and we have
$$
x_3e(s_3s_4s_6s_7s_8\nu)=-x_1e(s_3s_4s_6s_7s_8\nu)
$$
by Lemma \ref{formula2}. Now we apply Lemma \ref{formula1} twice:
\begin{align*}
x_4e(s_4s_6s_7s_8\nu)&=-x_1e(s_4s_6s_7s_8\nu),\\
x_5(x_4+\lambda x_5)e(s_6s_7s_8\nu)&=-x_1(x_4+\lambda x_5)e(s_6s_7s_8\nu).
\end{align*}
Then, we use  $x_4e(s_6s_7s_8\nu)=-\lambda x_1e(s_6s_7s_8\nu)$ and conclude that $x_5^2e(s_6s_7s_8\nu)=0$. It follows that 
$x_6e(s_6s_7s_8\nu)=-x_5e(s_6s_7s_8\nu)$ by Lemma \ref{formula3}. 

It is easy to see that $e(s_5s_7s_8\nu)=0$, so that $\psi_6\psi_5\psi_6e(s_7s_8\nu)=e(s_7s_8\nu)$. Hence, applying Lemma \ref{formula2} to 
$x_6e(s_6s_7s_8\nu)=-x_5e(s_6s_7s_8\nu)$, we deduce $x_7e(s_7s_8\nu)=-x_5e(s_7s_8\nu)$. 

We apply Lemma \ref{formula1} to $x_7e(s_7s_8\nu)=-x_5e(s_7s_8\nu)$. Then,
\begin{align*}
x_8e(s_8\nu)&=-x_5e(s_8\nu),\\
x_9(x_8+\lambda x_9)e&=-x_5(x_8+\lambda x_9)e.
\end{align*}
As $x_8e=-\lambda x_5e$ and $x_5^2e=0$, we have $x_9^2e=0$. 
\end{proof}

By Proposition \ref{i=3 for s=0} and Lemma \ref{wildness propagation 2}, we have the following corollary. 
\begin{cor}\label{general i for s=0}
Suppose $\ell\ge5$. Then, $\fqHH(\lambda^0_i)$ is wild, for $3\le i\le \frac{\ell+1}{2}$.
\end{cor}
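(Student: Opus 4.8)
The plan is to prove the statement by induction on $i$, using Proposition \ref{i=3 for s=0} as the base case and Lemma \ref{wildness propagation 2} (specialized to $s=0$ and $k=0$) as the inductive engine. The genuine computational content has already been absorbed into Proposition \ref{i=3 for s=0}; the corollary itself is a short propagation argument that threads wildness upward through the family $\fqHH(\lambda^0_i)$.

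First I would fix the base case. Under the standing hypothesis $\ell\ge5$, Proposition \ref{i=3 for s=0} asserts that $\fqHH(\lambda^0_3)$ is of wild representation type. This is exactly the $i=3$ instance, so the induction can begin there with no extra work.

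For the inductive step, suppose $\fqHH(\lambda^0_i)$ is wild for some $i$ with $3\le i<\frac{\ell+1}{2}$. Taking $s=0$ and $k=0$ in Lemma \ref{wildness propagation 2}, its hypothesis $0\le i\le\frac{\ell-s-1}{2}$ becomes $0\le i\le\frac{\ell-1}{2}$, and its conclusion is that $\fqHH(\lambda^0_{i+1})$ is then also wild. Since we only ever need to reach indices $i+1\le\frac{\ell+1}{2}$, we have $i\le\frac{\ell+1}{2}-1=\frac{\ell-1}{2}$ at each step, so the lemma applies throughout and wildness propagates from $\fqHH(\lambda^0_3)$ up to $\fqHH(\lambda^0_i)$ for every $3\le i\le\frac{\ell+1}{2}$.

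The only point that requires care is the bookkeeping with the index ranges, and this is where I expect the (mild) obstacle to lie: one must confirm that the upper bound $\frac{\ell-s-1}{2}$ of Lemma \ref{wildness propagation 2}, which specializes to $\frac{\ell-1}{2}$ for $s=0$, covers every step needed to reach the top index $\frac{\ell+1}{2}$, including the extremal step from $i=\frac{\ell-1}{2}$ to $i+1=\frac{\ell+1}{2}$ when $\ell$ is odd (for $\ell$ even the top integer index is $\frac{\ell}{2}$, reached from $\frac{\ell}{2}-1\le\frac{\ell-1}{2}$, so the constraint still holds). Once this floor-function check is made, no further difficulty remains, as the wildness propagation lemma furnishes the inductive step directly.
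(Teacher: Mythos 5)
Your proposal is correct and is exactly the paper's argument: the paper proves this corollary by citing Proposition \ref{i=3 for s=0} as the base case and iterating Lemma \ref{wildness propagation 2} with $s=0$, $k=0$. Your explicit check of the index bound $i\le\frac{\ell-1}{2}$ at each step (including the parity cases) is the only bookkeeping the paper leaves implicit, and you have verified it correctly.
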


\begin{prop}\label{delta for s=0 and l=1}
Suppose $\ell=1$. Then $\fqHH(\delta)$ is tame.
\end{prop}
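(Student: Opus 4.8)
The plan is to exploit the fact that, for $s=0$ and $\ell=1$, the algebra $\fqHH(\delta)$ is \emph{local}, in sharp contrast with the two-vertex situation of Proposition \ref{i=0}. First I would observe, using the residue convention of Section \ref{Sec: Fock space} (here $\ell=1$, so residues live in $\Z/2\Z$), that the empty bipartition has an addable node of residue $0$ in each component and no addable node of residue $1$, because both components carry charge $0$ when $s=0$. Hence every standard bitableau counted by Theorem \ref{Thm: dimension formula} begins by adding a node of residue $0$, so $K_{\bf q}(\lambda,(10))=0$ for every bipartition $\lambda$. The dimension formula then gives $\dim_{\bf q} e(10)\fqHH(\delta)e(10)=0$, whence $e(10)=0$ and $\fqHH(\delta)=e(01)\fqHH(\delta)e(01)$ has a single simple module.

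Next I would compute the graded dimension outright. The bipartitions of size two with content $\delta$ are exactly $((2),\emptyset)$, $((1,1),\emptyset)$, $(\emptyset,(2))$ and $(\emptyset,(1,1))$, the remaining bipartition $((1),(1))$ having content $2\alpha_0$ and being discarded. Each of the four carries a \emph{unique} standard bitableau, and all four have residue sequence $(01)$. Running the degree rule $\deg(T)=\sum_i d_b(x_i)$ of Section \ref{Sec: Fock space} through these tableaux, I expect to find the degrees $2,1,1,0$ respectively, so that Theorem \ref{Thm: dimension formula} yields
$$\dim_{\bf q}\fqHH(\delta)=1+2{\bf q}^2+{\bf q}^4.$$
In particular the total dimension is $4$ and the Poincar\'e polynomial is palindromic, as it must be for a symmetric algebra.

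Finally I would read off the structure and conclude. Since $\fqHH(\delta)$ is positively graded, $\Rad\fqHH(\delta)$ is the part of strictly positive degree; the coefficient $2$ of ${\bf q}^2$ shows $\dim\Rad/\Rad^2=2$, so the quiver of this local algebra has one vertex with two loops, while $\Rad^2$ is the one-dimensional degree-four part and $\Rad^3=0$. As cyclotomic quiver Hecke algebras are symmetric \cite[Appendix A]{SVV14}, the socle is one-dimensional, spanned by a degree-four element $z$, and the symmetrizing form restricts to a symmetric bilinear form $B$ on the two-dimensional space $V$ of degree-two elements via $uv=B(u,v)z$. Symmetry of the trace forces $uv=vu$, so the algebra is commutative, and a short argument shows $B$ is nondegenerate (a radical vector of $B$ would lie in the radical of the whole symmetrizing form); diagonalizing $B$ then identifies $\fqHH(\delta)\cong\bR[x,y]/(x^2,y^2)$, the familiar tame local symmetric (special biserial, equivalently Brauer graph) algebra, which proves the claim. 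The main point demanding care is the degree bookkeeping for the four tableaux under the above/below conventions, together with the verification that $B$ is nondegenerate; once these are in hand, the identification, and hence tameness, is forced exactly as in the endgame of Proposition \ref{i=0}.
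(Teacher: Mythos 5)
Your reduction to a $4$-dimensional local algebra is sound and agrees with the paper's own computation: $e(10)=0$ because every residue sequence begins with $0$, the four bipartitions of content $\delta$ each carry a unique standard tableau, their degrees are $0,1,1,2$, so $\dim_{\bf q}\fqHH(\delta)=1+2{\bf q}^2+{\bf q}^4$, and your arguments for commutativity (via symmetry of the trace form) and for nondegeneracy of $B$ are correct. (The paper reaches the same picture by exhibiting the basis $\{e,x_1e,x_2e,x_1x_2e\}$, $e=e(01)$, together with the explicit relations $x_1^2e=0$ and $x_2^2e=-\lambda x_1x_2e$.)

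The gap is the final step, ``diagonalizing $B$ then identifies $\fqHH(\delta)\cong\bR[x,y]/(x^2,y^2)$,'' which fails when ${\rm char}\,\bR=2$. First, in characteristic $2$ a nondegenerate symmetric bilinear form need not admit an orthogonal basis (alternating forms do not). Second, and more seriously, non-alternating forms really do occur here: the paper's relations give $B$ the Gram matrix $\begin{pmatrix}0&1\\1&-\lambda\end{pmatrix}$ in the basis $x_1e,x_2e$, so for ${\rm char}\,\bR=2$ and $\lambda\neq0$ the form is non-alternating; it can then be diagonalized, but the resulting algebra is $\bR[u,v]/(uv,\,u^2-v^2)$, which is \emph{not} isomorphic to $\bR[x,y]/(x^2,y^2)$ in characteristic $2$ (in the latter every radical element has square zero, whereas $u^2\neq0$ in the former). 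Since alternating versus non-alternating is a congruence invariant, no change of basis repairs this, and your abstract setup cannot even decide which case occurs---that requires the explicit relations. The conclusion nevertheless survives with a small patch: $\bR[u,v]/(uv,u^2-v^2)$ is the Brauer graph algebra of a single edge whose two vertices both have multiplicity $2$, hence special biserial and tame; alternatively, one can bypass the classification entirely by noting that over a local symmetric algebra every non-projective indecomposable module is killed by $\Soc\fqHH(\delta)=\Rad^2$, so the representation type equals that of $\fqHH(\delta)/\Soc\fqHH(\delta)\cong\bR\langle x,y\rangle/(x,y)^2$, a radical-square-zero algebra whose module problem is the tame Kronecker problem. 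With either repair in place of the diagonalization step, your argument is complete, and it amounts to a graded-dimension, coordinate-free version of the paper's explicit computation.
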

\begin{proof}
$\fqHH(\delta)={\rm Span}\{e, x_1e, x_2e, x_1x_2e\}$ and $x_1^2e=0, x_2^2e=-\lambda x_1x_2e$, where $e=e(01)$.
\end{proof}

\begin{prop}\label{delta for s=0}
For $\ell\ge2$, $\fqHH(\delta)$ is wild if $\lambda=(-1)^{\ell-1}$, tame if $\lambda\ne(-1)^{\ell-1}$.
\end{prop}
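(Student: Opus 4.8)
The plan is to follow the strategy of Propositions \ref{i=2 for s=0} and \ref{i=3 for s=0}: reduce every $x_i$ occurring in a suitable idempotent-truncated algebra to $x_1$ (and the last variable) by repeated use of Lemmas \ref{formula1}, \ref{formula2} and \ref{formula3}, read off the resulting basic algebra as a quiver with relations, and locate the single coefficient that governs the dichotomy. First I would determine the nonzero idempotents $e(\nu)$ with $\beta=\delta$. Since $\langle h_0,2\Lambda_0\rangle=2$ and $\langle h_i,2\Lambda_0\rangle=0$ for $i\ne0$, the cyclotomic relation forces $\nu_1=0$; moreover the only bipartitions of content $\delta$ are $(\text{hook},\emptyset)$ and $(\emptyset,\text{hook})$, the two components never being simultaneously nonempty because each would contribute a node of residue $0$ at its corner. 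A weight-multiplicity count for $V(2\Lambda_0)_{2\Lambda_0-\delta}$ (Kostant/Weyl--Kac, which reduces to a signed sum over products of pairwise non-adjacent reflections) then shows that $\fqHH(\delta)$ has exactly $\ell$ simple modules.

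For the wild case I would imitate Proposition \ref{i=2 for s=0}. Choose two idempotents $e_1=e(0,1,\dots,\ell)$ and $e_2$ obtained by an adjacent transposition near the end of this residue sequence, set $e=e_1+e_2$, and compute the spherical subalgebra $e\fqHH(\delta)e$ explicitly. Walking the variable $x$ around the full cyclic residue pattern $0\to1\to\cdots\to\ell\to0$, each adjacent step $i\to i+1$ uses $\psi_i^2e=(x_i+x_{i+1})e$ (from $Q_{i,i+1}=u+v$) to give $x_{i+1}\equiv-x_i$, while the closing step $\ell\to0$ uses $Q_{\ell0}=u+\lambda v$ to contribute a factor $-\lambda$; the accumulated ``return'' coefficient is therefore $(-1)^{\ell-1}\lambda$, and the relation that would express the top $x$-power through lower ones carries the factor $1-(-1)^{\ell-1}\lambda$. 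When $\lambda=(-1)^{\ell-1}$ this coefficient vanishes, an extra degree-two element survives in $\Rad/\Rad^2$, and $e\fqHH(\delta)e$ acquires two loops at one vertex together with arrows $1\rightarrow2$ and $1\leftarrow2$; by the criterion already used in Propositions \ref{i=0}, \ref{ell=1} and \ref{i=2 for s=0} this forces wild representation type, so $\fqHH(\delta)$ is wild. Note that this mechanism needs $\ell\ge2$, since for $\ell=1$ there is a single vertex (Proposition \ref{delta for s=0 and l=1}) and the two-loop configuration cannot pair with arrows.

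For the tame case $\lambda\ne(-1)^{\ell-1}$ the same coefficient $1-(-1)^{\ell-1}\lambda$ is invertible, so the straightening succeeds and every local subalgebra $e(\nu)\fqHH(\delta)e(\nu)$ collapses to a small one governed by relations of the shape $x^2=0$, as in Lemma \ref{formula3} and Proposition \ref{delta for s=0 and l=1}. I would then assemble the full basic algebra on its $\ell$ vertices, arranged along a line with consecutive vertices joined by a pair of arrows, and verify---after the change of generators used at the end of Proposition \ref{i=2 for s=0}---that it is special biserial, hence tame. The main obstacle is precisely this last step: carrying the $x_i$-reductions through the long residue words uniformly in $\ell$ for every pair of adjacent idempotents (the bookkeeping is like that of Proposition \ref{i=3 for s=0} but must be organized once and for all), and then confirming that, once $1-(-1)^{\ell-1}\lambda$ has been inverted, no relation outside the special biserial class survives.
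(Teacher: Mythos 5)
Your preliminary reduction (there are $\ell$ simple modules, every $x_k e(\nu)$ straightens to a multiple of $x_1e(\nu)$, and a single coefficient vanishing exactly at $\lambda=(-1)^{\ell-1}$ governs the dichotomy) is sound and agrees with the paper, which works with the idempotents $e_i=e(0,1,\dots,i,\ell,\ell-1,\dots,i+1)$, $0\le i\le \ell-1$. However, your wild-case mechanism has a genuine gap. Take your two idempotents, which are the paper's $e_{\ell-1}=e(0,1,\dots,\ell)$ and $e_{\ell-2}=e(0,1,\dots,\ell-2,\ell,\ell-1)$ (the only nonzero adjacent transposition near the end). Each local algebra is $\Span\{e_i,\,x_1e_i,\,x_{\ell+1}e_i,\,x_1x_{\ell+1}e_i\}$ with $x_1e_i$ and $x_{\ell+1}e_i$ linearly independent, and the only degree-two element of $e_i\Rad^2 e_i$ in the truncated algebra is $\psi_\ell^2e_i=(x_\ell+x_{\ell+1})e_i$, which equals $\bigl((-1)^{\ell-1}x_1+x_{\ell+1}\bigr)e_{\ell-1}$, respectively $\bigl(-\lambda x_1+x_{\ell+1}\bigr)e_{\ell-2}$. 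This is nonzero for \emph{every} $\lambda$, so each vertex carries exactly one loop, for every $\lambda$: the ``extra degree-two element in $\Rad/\Rad^2$'' and the two-loops-at-one-vertex configuration you invoke never arise, and the quiver criterion used in Propositions \ref{i=0} and \ref{ell=1} cannot be applied. Indeed it could not possibly apply: the quiver of this two-vertex truncation (one loop at each vertex, one arrow each way) is independent of $\lambda$, and it is exactly the quiver of the tame algebras --- for $\ell=2$ it is the quiver of the full basic algebra, which is tame whenever $\lambda\ne-1$. The dichotomy is simply invisible in the quiver of any two-vertex truncation.

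What actually detects wildness is one of the following, and this is where your proof must be repaired. Either one computes the \emph{relations} of the two-vertex algebra, as the paper does for $\ell=2$: with $\alpha^2=\beta^2=0$, $\alpha\mu=\mu\beta$, $\beta\nu=\nu\alpha$ and $\mu\nu\mu+(1+\lambda)\alpha\mu=\nu\mu\nu+(1+\lambda)\beta\nu=0$, wildness at $\lambda=-1$ comes from the degenerated relations, not from the quiver. Or one passes to the full basic algebra $A$ on all $\ell$ vertices, as the paper does for $\ell\ge3$: at a middle vertex $i$ the degree-two part of $e_i\Rad^2 A\,e_i$ is spanned by \emph{two} elements, $(x_\ell+x_{\ell+1})e_i$ (a path through the neighbor $i+1$) and $(x_{i+1}+x_{\ell+1})e_i$ (a path through the neighbor $i-1$); these become proportional precisely when $(-1)^{\ell-i-1}\lambda=(-1)^i$, i.e.\ $\lambda=(-1)^{\ell-1}$, and only then does a loop survive at each interior vertex, after which wildness follows from the shape of the full quiver. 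So your ``return coefficient'' is the right invariant, but it manifests itself through loops at \emph{interior} vertices of the $\ell$-vertex quiver (or through relations), never as a second loop in a two-vertex truncation. A related small correction for your tame case: the quiver there is not just a line with double arrows; loops survive at the two end vertices $0$ and $\ell-1$ for all $\lambda$, and your special biserial verification has to include them.
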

\begin{proof}
First we suppose $\ell=2$, and define $e_1=e(012)$ and $e_2=e(021)$. Then $e_1+e_2=1$ and $x_2e_1=-x_1e_1$, $x_2e_2=-\lambda x_1e_2$. Applying Lemma \ref{formula1}, we have
$x_3\psi_2^2e_1=-\lambda x_1\psi_2^2e_1$ and $x_3\psi_2^2e_2=-x_1\psi_2^2e_2$, so that
$$
x_3^2e_1=(1-\lambda)x_1x_3e_1,\;\; x_3^2e_2=0.
$$
We may also compute $x_3\psi_2e_1=-x_1\psi_2e_1$, $x_3\psi_2e_2=-\lambda x_1\psi_2e_2$. 

Define $\alpha=x_1e_1, \beta=x_1e_2$, $\mu=\psi_2e_2$ and $\nu=\psi_2e_1$. Then, we are in the similar situation as in the proof of Proposition \ref{i=2 for s=0}. 
We have the same quiver but the defining relations here are given by
$$
\alpha^2=\beta^2=0,\; \alpha\mu=\mu\beta, \; \beta\nu=\nu\alpha, \; \mu\nu\mu+(1+\lambda)\alpha\mu=\nu\mu\nu+(1+\lambda)\beta\nu=0.
$$
If $\lambda=-1$ then it is wild. If $\lambda\ne-1$, we use $\alpha'=\mu\nu+(1+\lambda)\alpha$ and $\beta'=\nu\mu+(1+\lambda)\beta$ instead of $\alpha$ and $\beta$ 
to show that it is a special biserial algebra.

Next we suppose $\ell\ge3$. 
Let $\nu_i=(0\;1\;2\;\cdots\; i \; \ell \; \ell-1 \; \cdots \; i+1)$ and define $e_i=e(\nu_i)$. We may prove that 
$x_ke_i$ is a scalar multiple of $x_1e_i$, for $1\le k\le\ell$, by induction on $k$. Applying Lemma \ref{formula1} to $x_\ell e(s_\ell\nu_i)=c_ix_1e(s_\ell \nu_i)$, for 
some $c_i\in\bR$, we know that $x_{\ell+1}^2e_i$ is a scalar multiple of $x_1x_{\ell+1}e_i$. If $e_i\psi_we_i\ne0$ then $\psi_w=1$. Thus $e_i\fqHH(\delta)e_i={\rm Span}\{e_i, x_1e_i, x_{\ell+1}e_i, x_1x_{\ell+1}e_i\}$. 

The similar computation shows that
\begin{align*}
e_i\fqHH(\delta)e_{i+1}&={\rm Span}\{\psi_\ell\psi_{\ell-1}\cdots\psi_{i+2}e_{i+1}, x_1\psi_\ell\psi_{\ell-1}\cdots\psi_{i+2}e_{i+1}\},\\
e_{i+1}\fqHH(\delta)e_i&={\rm Span}\{\psi_{i+2}\psi_{i+3}\cdots\psi_\ell e_i, x_1\psi_{i+2}\psi_{i+3}\cdots\psi_\ell e_i\}.
\end{align*}
Let $e=\sum_{i=0}^{\ell-1} e_i$ and $A=e\fqHH(\delta)e$. Then, $A$ is the basic algebra of $\fqHH(\delta)$. The quiver of $A$ has vertices $0,1,\dots,\ell-1$. 
By degree consideration, $\dim e_0(\Rad A/\Rad^2 A)e_0=1$ because nonzero scalar multiples of 
$$
(\psi_\ell\psi_{\ell-1}\cdots \psi_2e_1)(\psi_2\psi_3\cdots\psi_\ell e_0)=\psi_\ell^2e_0=(x_\ell+x_{\ell+1})e_0
$$
are the only degree two elements in $e_0(\Rad A)e_0$. Similarly, $\dim e_{\ell-1}(\Rad A/\Rad^2 A)e_{\ell-1}=1$ because
$(\psi_\ell e_{\ell-2})(\psi_\ell e_{\ell-1})=\psi_\ell^2e_{\ell-1}=(x_\ell+x_{\ell+1})e_{\ell-1}$. 

If $i\ne 0, \ell-1$, $\dim e_i(\Rad A/\Rad^2 A)e_i=0$ or $1$ depending on whether $(x_\ell+x_{\ell+1})e_i$ and $(x_{i+1}+x_{\ell+1})e_i$ are linearly independent or not, because 
degree two elements in $e_i(\Rad A)e_i$ are linear combination of the following elements:
\begin{align*}
(\psi_\ell\psi_{\ell-1}\cdots \psi_{i+2}e_{i+1})(\psi_{i+2}\psi_{i+3}\cdots\psi_\ell e_i)&=\psi_\ell^2e_i=(x_\ell+x_{\ell+1})e_i,\\
(\psi_{i+1}\psi_{i+2}\cdots\psi_\ell e_{i-1})(\psi_\ell\psi_{\ell-1}\cdots \psi_{i+1}e_i)&=(x_{i+1}+x_{\ell+1})e_i.
\end{align*}
Note that $e(s_j\nu_i)=0$, for $1\le j\le i$, implies $\psi_j^2e_i=0$ and $x_{i+1}e_i=(-1)^ix_1e_i$ holds. Similarly, 
$e(s_j\nu_i)=0$, for $i+2\le j\le \ell-1$, implies $x_\ell e_i=(-1)^{\ell-i-2}x_{i+2}e_i$. On the other hand, if we repeatedly apply Lemma \ref{formula1} to
$$
(\lambda x_1+x_2)e(s_2s_3\cdots s_{i+1}\nu_i)=\psi_1^2e(s_2s_3\cdots s_{i+1}\nu_i)=0,
$$
we reach $x_{i+2}e_i=-\lambda x_1e_i$. Therefore, $x_\ell e_i=(-1)^{\ell-i-2}x_{i+2}e_i=(-1)^{\ell-i-1}\lambda x_1e_i$ and we compare it with  $x_{i+1}e_i=(-1)^ix_1e_i$. 
Given the explicit shape of the quiver of $A$, it is clear that $A$ is wild if $\lambda=(-1)^{\ell-1}$. As we are given explicit basis elements of $A$, 
we can show that $A$ is a special biserial algebra if $\lambda\ne(-1)^{\ell-1}$.
\end{proof}

\begin{prop}
$\fqHH(\lambda^0_1+\delta)$ is wild.
\end{prop}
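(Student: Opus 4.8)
The plan is to follow the explicit-basis method of Propositions \ref{i=2 for s=0} and \ref{delta for s=0}: I would compute an idempotent truncation $A = e\fqHH(\lambda^0_1+\delta)e$ explicitly, present it as a path algebra of a quiver modulo relations, and read off wildness directly. Since $\beta = \lambda^0_1+\delta = 2\alpha_0+\alpha_1+\cdots+\alpha_\ell$ has $n=\ell+2$ nodes and $\langle h_i, 2\Lambda_0\rangle = 2\delta_{i0}$, the cyclotomic relation gives $x_1^2 e(\nu)=0$ whenever $\nu_1=0$ and $e(\nu)=0$ whenever $\nu_1\neq 0$; thus the surviving idempotents correspond to residue sequences of content $\{0,0,1,\dots,\ell\}$ that begin with $0$. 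I would take idempotents $e_i=e(\nu_i)$ adapted to $\delta$ exactly as in Proposition \ref{delta for s=0}, but with the extra residue-$0$ node inserted, and set $e=\sum_i e_i$.

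The heart of the computation is to reduce all $x_k e_i$ and all $e_j\psi_w e_i$ to a small explicit basis, using Lemmas \ref{formula1}, \ref{formula2} and \ref{formula3} together with the relations $\psi_m^2 e(\nu)=0$ at equal-residue positions; this is the same bookkeeping carried out in Propositions \ref{i=2 for s=0} and \ref{delta for s=0}. The new feature here is the second residue-$0$ node: it introduces an additional equal-residue position, hence additional $x$-variables that do not collapse, so the local algebras $e_i\fqHH(\lambda^0_1+\delta)e_i$ become strictly larger than the four-dimensional special-biserial pieces appearing for $\fqHH(\delta)$. For example, when $\ell=1$ the dimension formula (Theorem \ref{Thm: dimension formula}) already gives $\dim e(001)\fqHH(\lambda^0_1+\delta)e(001)=16$, so the vertex carries several loops. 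Combined with the pair of opposite arrows between neighbouring idempotents (which persist, as in Proposition \ref{delta for s=0}), the resulting quiver-with-relations should contain the wild configuration of Proposition \ref{i=0} and Proposition \ref{ell=1} — two loops on a vertex together with arrows $1\rightarrow 2$ and $1\leftarrow 2$.

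It then remains to confirm wildness of $A$ uniformly in $\lambda$ and in $\mathrm{char}\,\bR$, which I would do exactly as in Proposition \ref{i=2 for s=0}: pass to a suitable quotient of $A$ by adding one relation and recognise a standard wild algebra. Unlike the $\fqHH(\delta)$ case, where tameness survived for $\lambda\neq(-1)^{\ell-1}$, the extra loops should obstruct the special-biserial structure for every value of $\lambda$ (including $\lambda=0$ when $\ell=1$) and every characteristic. As an independent check in the remaining delicate subcase $\ell\geq 2$, $\lambda=(-1)^{\ell-1}$, wildness is immediate: here $\lambda^0_1+\delta=\delta+\alpha_0$ with $l_0=\langle h_0, 2\Lambda_0-\delta\rangle=2>0$, so the induction functor $F_0$ satisfies the hypotheses of Proposition \ref{reduction to critical rank} and transports the wildness of $\fqHH(\delta)$ from Proposition \ref{delta for s=0} to $\fqHH(\delta+\alpha_0)$.

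The main obstacle I expect is the explicit-basis reduction itself: applying Lemmas \ref{formula1}--\ref{formula3} in the correct order to pin down $e_i\fqHH(\lambda^0_1+\delta)e_j$ completely and uniformly in $\ell$, and then verifying that the resulting quiver-with-relations is wild for \emph{all} $\lambda$ and all characteristics rather than only generically. The case $\ell=1$ deserves separate attention, both because $Q_{01}(u,v)=u^2+\lambda uv+v^2$ has a different shape and because the algebra is not positively graded there, so the explicit-basis argument (rather than the quiver lemma following Theorem \ref{Thm: dimension formula}) is the appropriate tool, as in Proposition \ref{delta for s=0 and l=1}.
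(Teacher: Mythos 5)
Your plan follows the paper's general method (idempotent truncation, explicit basis via Lemmas \ref{formula1}--\ref{formula3}, then read off the quiver), but the decisive step --- actually exhibiting a wild configuration, uniformly in $\lambda$ and ${\rm char}\,\bR$ --- is never carried out, and the one concrete piece of evidence you offer would lead the computation into a trap. You propose idempotents with the two residue-$0$ nodes \emph{adjacent}, e.g.\ $e(001)$ for $\ell=1$, and infer from $\dim e(001)\fqHH(\lambda^0_1+\delta)e(001)=16$ that ``the vertex carries several loops.'' That inference fails twice. First, the dimension of $\epsilon A\epsilon$ never counts loops; loops are counted by $\dim \epsilon(\Rad A/\Rad^2 A)\epsilon$ for a \emph{primitive} idempotent $\epsilon$, and the positivity hypothesis of the quiver lemma following Theorem \ref{Thm: dimension formula} is not available here, since $\deg \psi_1e(001)=-(\alpha_0|\alpha_0)=-2$ (this is a feature of adjacent equal residues, not of $\ell=1$). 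Second, and worse, $e(001)$ is not primitive: positions $1,2$ carry equal residues, so the unital algebra map $\fqHH(2\alpha_0)\to e(001)\fqHH(\lambda^0_1+\delta)e(001)$ is defined and injective (its source is $\cong M_2(\bR)$, a simple algebra, and $e(001)\ne0$), whence $-\psi_1x_1e(001)$ is a nontrivial idempotent splitting $e(001)$. Consequently $e(001)\fqHH(\lambda^0_1+\delta)e(001)\cong M_2(B)$ with $\dim B=4$: the $16$ is matrix-algebra fat, the basic piece at this ``vertex'' is again four-dimensional --- \emph{not} strictly larger than the pieces in Proposition \ref{delta for s=0} --- and no loops can be extracted from this count.

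The paper's proof avoids exactly this by separating the two $0$'s: it takes the \emph{single} idempotent $e=e(010)$ for $\ell=1$, $e=e(0120)$ for $\ell=2$, and $e=e(0\,1\,\cdots\,\ell\,0)$ for $\ell\ge3$, and shows that $e\fqHH(\lambda^0_1+\delta)e={\rm Span}\{x_1^ax_{\ell+1}^bx_{\ell+2}^ce\mid a,b,c=0,1\}$ is an eight-dimensional commutative local algebra; its quiver is one vertex with three loops, which is wild for every $\lambda$ and every characteristic, with no case analysis and no special-biserial discussion. The part of your proposal that is correct is the final functor argument: for $\beta=\delta$ one has $l_0=\langle h_0,2\Lambda_0-\delta\rangle=2>0$, so Proposition \ref{reduction to critical rank} together with the Kang--Kashiwara theorem transports wildness of $\fqHH(\delta)$ to $\fqHH(\delta+\alpha_0)$ --- this is precisely the mechanism of Lemma \ref{wildness propagation 1}. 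But Proposition \ref{delta for s=0} gives wildness of $\fqHH(\delta)$ only when $\ell\ge2$ and $\lambda=(-1)^{\ell-1}$; in all other cases ($\ell=1$, or $\lambda\ne(-1)^{\ell-1}$) $\fqHH(\delta)$ is tame and there is nothing to propagate. So this shortcut cannot substitute for the missing computation, and as written the proposal leaves the main case unproved.
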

\begin{proof}
We define $e=e(010)$ for $\ell=1$ and $e=e(0120)$ for $\ell=2$. If $\ell\ge3$, we define $e=e(\nu)$, where $\nu=(0\; 1\; 2\;\cdots \; \ell-1 \; \ell \; 0)$. Then we 
consider $A=e\fqHH(\lambda^0_1+\delta)e$. Explicit computation of the algebra structure shows that 
$$
A={\rm Span}\{x_1^ax_{\ell+1}^bx_{\ell+2}^ce \mid a,b,c=0,1\}.
$$
For example, we remark that the subalgebra of $e(\delta,0)\fqHH(\lambda^0_1+\delta)e(\delta,0)$ generated by 
$\psi_ke(\delta,0)$, for $1\le k\le \ell$, and $x_ie(\delta,0)$, for $i=1,\ell+1$, may be viewed as a quotient algebra of $\fqHH(\delta)$. 
Thus, we know that $x_ie(\nu,0)$ is a scalar multiple of $x_1e(\nu,0)$, for $\nu\in I^\delta$ and $1\le i\le \ell$. So the computation is 
to show that $x_{\ell+1}^2e$ and $x_{\ell+2}^2e$ are linear combination of $x_1^ax_{\ell+1}^bx_{\ell+2}^ce$ with  $a,b,c=0,1$. As $\dim A=8$ and 
the computation is not too difficult, we omit the details. 
\end{proof}

\begin{cor}
If $0\le i\le \frac{\ell+1}{2}$ and $k\ge1$ satisfy $i+k\ge2$, then $\fqHH(\lambda^0_i+k\delta)$ is wild.
\end{cor}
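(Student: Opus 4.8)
The plan is to obtain the entire wild region from a single base case by propagating wildness through the two reduction lemmas. I would take $\fqHH(\lambda^0_1+\delta)$, shown wild in the proposition immediately above, as the base case $(i,k)=(1,1)$, and observe that although Lemma~\ref{wildness propagation 1} and Lemma~\ref{wildness propagation 2} are recorded while proving Theorem~\ref{main thm} for $\Lambda=\Lambda_0+\Lambda_s$ with $s\ge1$, their proofs only invoke Proposition~\ref{reduction to critical rank} and the induction/restriction functor isomorphisms, hence apply verbatim when $s=0$; indeed Corollary~\ref{general i for s=0} already used Lemma~\ref{wildness propagation 2} in the $\fqHH$ setting. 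Specialized to $s=0$, the two lemmas read: if $\fqHH(\lambda^0_i+k\delta)$ is wild with $1\le i\le\frac{\ell+1}{2}$ then so is $\fqHH(\lambda^0_{i-1}+(k+1)\delta)$, and if $\fqHH(\lambda^0_i+k\delta)$ is wild with $0\le i\le\frac{\ell-1}{2}$ then so is $\fqHH(\lambda^0_{i+1}+k\delta)$.

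First I would treat the spine $i\in\{0,1\}$ by induction on $k$. Starting from wildness of $\fqHH(\lambda^0_1+k\delta)$, one move of Lemma~\ref{wildness propagation 1} (with $i=1$) yields wildness of $\fqHH(\lambda^0_0+(k+1)\delta)=\fqHH((k+1)\delta)$, and one move of Lemma~\ref{wildness propagation 2} (with $i=0$) then yields wildness of $\fqHH(\lambda^0_1+(k+1)\delta)$. With base case $(1,1)$ this gives, for every $k\ge1$, wildness of $\fqHH(\lambda^0_1+k\delta)$, and for every $k\ge2$, wildness of $\fqHH(k\delta)$. Next I would fill in $i\ge2$: fixing any $k\ge1$ and using that $\fqHH(\lambda^0_1+k\delta)$ is now known wild, repeated application of Lemma~\ref{wildness propagation 2} pushes wildness along $\fqHH(\lambda^0_2+k\delta),\fqHH(\lambda^0_3+k\delta),\dots$, each step being legitimate because the source index stays at most $\frac{\ell-1}{2}$, so wildness reaches every $\fqHH(\lambda^0_i+k\delta)$ with $2\le i\le\frac{\ell+1}{2}$. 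Assembling the two steps covers exactly the pairs with $k\ge1$ and $i+k\ge2$; the hypothesis $i+k\ge2$ is precisely what rules out the lone pair $(i,k)=(0,1)$, that is $\fqHH(\delta)$, which Proposition~\ref{delta for s=0 and l=1} and Proposition~\ref{delta for s=0} show can be tame.

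I do not expect a genuine obstacle: the content is purely combinatorial bookkeeping, and the region $\{\,k\ge1,\ i+k\ge2,\ 0\le i\le\frac{\ell+1}{2}\,\}$ is exactly what is reachable from the base point $(1,1)$ under the two monotone moves $i\mapsto i+1$ at fixed $k$ and $(i,k)\mapsto(i-1,k+1)$. The only points meriting care are to confirm that the two propagation lemmas are available at $s=0$ and that their index ranges $1\le i\le\frac{\ell+1}{2}$ and $0\le i\le\frac{\ell-1}{2}$ permit the induction to attain the extreme value $i=\frac{\ell+1}{2}$; both are immediate, and the argument is uniform in $\ell\ge1$ since the base case proposition already covers $\ell=1,2$ and $\ell\ge3$ separately.
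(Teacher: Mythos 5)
Your proposal is correct and is essentially the paper's own (implicit) argument: the corollary is stated right after the wildness of $\fqHH(\lambda^0_1+\delta)$ precisely because it follows by propagating that single base case through Lemma \ref{wildness propagation 1} and Lemma \ref{wildness propagation 2} specialized to $s=0$, exactly as you describe (the paper itself already invokes Lemma \ref{wildness propagation 2} at $s=0$ in Corollary \ref{general i for s=0}). Your reachability bookkeeping for the region $\{k\ge1,\ i+k\ge2\}$ and the exclusion of $(i,k)=(0,1)$ match the intended proof.
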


The above corollary tells that if $\fqHH(\lambda^0_i+k\delta)$ is not wild, then either $k=0$, or $k=1$ and $i=0$. Suppose $k=0$. 
It is clear that $\fqHH(\lambda^0_1)$ is finite. For $i\ge2$, $\fqHH(\lambda^0_i)$ are treated in 
Proposition \ref{i=2 for s=0} and Proposition \ref{i=3 for s=0}, Corollary \ref{general i for s=0}. Suppose $k=1$ and $i=0$. Then 
$\fqHH(\delta)$ are treated in Proposition \ref{delta for s=0 and l=1} and Proposition \ref{delta for s=0}. 


\bibliographystyle{amsplain}

\end{document}